\documentclass{amsart}
\usepackage{amsmath,amsfonts,amsthm,amssymb,indentfirst,epic,url,graphics}
\usepackage{hyperref}

\setlength{\textwidth}{6.5in}
\setlength{\textheight}{9.00in}
\setlength{\evensidemargin}{0in}
\setlength{\oddsidemargin}{0in}
\setlength{\topmargin}{-.5in}
\sloppy

\setlength{\mathsurround}{.167em}

\newtheorem{theorem}{Theorem}
\newtheorem{lemma}[theorem]{Lemma}
\newtheorem{corollary}[theorem]{Corollary}
\newtheorem{proposition}[theorem]{Proposition}
\theoremstyle{definition}
\newtheorem{definition}[theorem]{Definition}
\newtheorem{question}[theorem]{Question}

\newcommand{\Z}{\mathbb{Z}}
\renewcommand{\r}{\mathrm}
\newcommand{\card}{\r{card}}

\raggedbottom

\begin{document}

\title%
{On Vaughan Pratt's crossword problem}
\thanks{%
After publication, any updates, errata, related references,
etc., found will be recorded at
\url{http://math.berkeley.edu/~gbergman/papers/}
and at \url{http://math.byu.edu/~pace/}.\\
\hspace*{1.05em}
Archived at \url{http://arXiv.org/abs/1504.07310}\,.\\
\hspace*{1.05em}
This work was partially supported by a grant from the
Simons Foundation (\#315828 to Pace Nielsen).
}

\subjclass[2010]{Primary: 08A99, 68R15,
Secondary: 03E10, 03G10, 06A11, 06D05, 54A05, 68Q65.}
\keywords{$\!T_1\!$ comonoid in $\!\mathbf{chu}_2,\!$
``crossword puzzle'', cardinality, distributive lattice%
}

\author{George M. Bergman}
\address{University of California\\
Berkeley, CA 94720-3840, USA}
\email{gbergman@math.berkeley.edu}

\author{Pace P.\ Nielsen}
\address{Brigham Young University\\
Provo, UT 84602-1231, USA}
\email{pace@math.byu.edu}

\begin{abstract}
Vaughan Pratt has
introduced objects consisting of pairs $(A,\,W)$ where $A$
is a set and $W$ a set of subsets of $A,$ such that
\ (i)~$\!W$ contains $\emptyset$ and $A,$
(ii)~if $C$ is a subset of $A\times A$ such that
for every $a\in A,$ both $\{b\mid (a,b)\in C\}$
and $\{b\mid (b,a)\in C\}$ are members of $W$ (a ``crossword'' with
all ``rows'' and ``columns'' in $W),$
then $\{b\mid (b,b)\in C\}$ (the ``diagonal word'')
also belongs to $W,$ and
(iii)~for all distinct $a,b\in A,$ the set $W$ has an element which
contains $a$ but not $b.$
He has asked whether for every $A,$ the only such $W$ is
the set of {\em all} subsets of $A.$

We answer that question in the negative.
We also
obtain several positive results, in particular, a positive answer
to the above question if $W$ is closed under complementation.
We obtain partial results on whether there can exist counterexamples
to Pratt's question with $W$ countable.
\end{abstract}
\maketitle

\section{Definitions and conventions}\label{S.defs}

We begin by defining the type of structures we will be considering.
These are called ``$\!\mathbf{chu}_2\!$ comonoids'' by Vaughan Pratt;
we shall call them {\em Pratt comonoids}.
The category-theoretic background of Pratt's terminology
is not a prerequisite for reading
this note; we sketch that background in an appendix,~\S\ref{S.chu}.

\begin{definition}\label{D.chu_cm}
By a {\em Pratt comonoid} we shall mean a pair $(A,W),$ where
$A$ is a set, and $W$ a set of subsets of $A$ such that

\textup{(i)} $\emptyset$ and $A$ are members of $W,$ and

\textup{(ii)} whenever $C$ is a subset of $A\times A$ such that
for every $a\in A,$ both $\{b\mid (a,b)\in C\}$
and $\{b\mid (b,a)\in C\}$ are members of $W,$ we also have
$\{b\mid (b,b)\in C\}\in W.$

In this situation, we will call $A$ the {\em base-set} of
the Pratt comonoid $(A,W),$ and $W$ the {\em Pratt
comonoid structure} on $A.$

A set $W$ of subsets of a set $A$ will be called $\!T_1\!$
if it satisfies

\textup{(iii)} for all $a,b\in A$ with $a\neq b,$ the set $W$
has an element which contains $a$ but not $b.$

A Pratt comonoid $(A,W)$ will be called $\!T_1\!$ if $W$
is $\!T_1\!$ as a set of subsets of $A.$
It will be called {\em discrete} if
$W=2^A,$ the full power set of $A.$
\end{definition}

We shall generally identify subsets of $A$ or $A\times A$ with
$\!\{0,1\}\!$-valued functions on those sets.
In particular, we may call subsets of $A$ ``words''
on $A,$ and a subset $C\subseteq A\times A$ satisfying the
hypotheses of~(ii) a ``crossword'' over $W,$ since
its rows and columns are words lying in $W.$
We will use the notation $x\vee y$ and $x\wedge y$ for the
union and intersection (or from the $\{0,1\}$ point of view,
pointwise sup and pointwise inf) of words $x$ and $y,$
and likewise $\leq$ and $\geq$ for inclusion, and
$<$ and $>$ for strict inclusion between words.

For $C\subseteq A\times A$ and $a\in A,$ we shall follow
the matrix-theoretic convention of calling $\{b\mid (a,b)\in C\}$
the $\!a\!$-th row, and $\{b\mid (b,a)\in C\}$ the $\!a\!$-th column
(rather than the convention of the cartesian plane, where
the first member of an ordered pair is the horizontal
and the second the vertical coordinate).

This note was inspired by the following question, which will be
answered in the negative in~\S\ref{S.answer}.
\begin{equation}\begin{minipage}[c]{35pc}\label{d.VPq}
(V.\,Pratt \cite{puzzle}, \cite[pp.\,27--28]{chu_online}, \cite{chu})
\ Is every $\!T_1\!$ Pratt comonoid discrete?
\end{minipage}\end{equation}

Though~\eqref{d.VPq} concerns the $\!T_1\!$
case, many of the general results we prove will
concern arbitrary Pratt comonoids, with the $\!T_1\!$ condition
only brought in for the {\em coups de grace} of our main results.
Likewise, since our only known example of a $\!T_1\!$ Pratt comonoid
that is not discrete is quite complicated, examples showing the
obstructions to one or another approach
will in general be non-$\!T_1.$

We shall use set-theorists' notation for ordinals; in
particular, the set of natural numbers (nonnegative integers)
will be denoted $\omega.$
We shall write relative complements of sets as
$x-y=\{a\in x\mid a\notin y\}.$
When subsets of a given set $A$ are under consideration,
we shall often write $\neg x$ for the relative complement $A-x$
of a subset $x$ of $A.$

The next two sections mainly summarize known material.

\section{Some quick examples and immediate results}\label{S.quick}

The easiest examples of Pratt comonoids other than discrete
ones are gotten by taking a preorder $\preccurlyeq$ on a set $A,$
and defining $W$ to be the set of all {\em down-sets} of $A,$
that is, sets $x$ such that $a\preccurlyeq b\in x\implies a\in x$
\cite[Proposition~2.1]{comonoid}.
The reader can easily verify that
such pairs $(A,W)$ satisfy the definition.

For an example that deviates slightly from this form,
let $A$ consist of the set $\omega$ of natural numbers together
with one additional element $\infty,$ greater than every natural
number; and let
\begin{equation}\begin{minipage}[c]{35pc}\label{d.*w&infty}
$W\ =\ \{$down-sets of $A$ other than $\omega\,\}.$
\end{minipage}\end{equation}
In other words, $W$ consists of
those down-sets which, if they contain all natural
numbers, also contain $\infty.$
Since by the preceding paragraph, the set of {\em all} down-sets of $A$
yields a Pratt comonoid, to show that the $W$ we have just described
also determines one, we just have to show that any crossword over
$W$ whose diagonal contains all pairs $(n,n)$ $(n\in\omega)$ must
contain $(\infty,\infty).$
Now moving upward or to the left from the diagonal in such
a crossword $C$ (i.e., decreasing one or the other coordinate),
we see that every pair $(m,n)$ with $m,n\in\omega$
belongs to $C.$
Hence the word in $W$ given by each
natural-number-indexed row contains all natural numbers, hence,
by definition of $W,$ also contains $\infty.$
This says the
element of $W$ corresponding to the {\em column} indexed by $\infty$
contains all natural numbers, hence, again, contains $\infty;$ so
$(\infty,\infty)\in C,$ as required.

(The above example, essentially \cite[Proposition~2.4]{comonoid},
is an instance of the more general result
\cite[Proposition~2.2]{comonoid}, which says that what is called the
{\em Scott topology}
on a directed-complete partial order yields a Pratt comonoid.)

Here are some easy ways of getting new Pratt comonoids from old.

\begin{lemma}\label{L.new_fr_old}
\textup{(i)}  If $(A,W)$ is a Pratt comonoid, then
so is $(A,W^\neg),$ where $W^\neg=\{\neg w\mid w\in W\},$ the set of
complements in $A$ of members of $W.$
If $(A,W)$ is $\!T_1,\!$ then so is $(A,W^\neg).$

\textup{(ii)}  If $\{(A,W_i)\mid i\in I\}$ is a
\textup{(}finite or infinite\textup{)} set of Pratt comonoids
with the same base-set $A,$ then $(A,\,\bigcap_{i\in I} W_i)$
is a Pratt comonoid.

\textup{(iii)}  If $f: A\to A'$  is a set map and $(A,W)$ a
Pratt comonoid, and we let $W'=\{w\subseteq A'\mid f^{-1}(w)\in W\},$
then $(A',W')$ is a Pratt comonoid.

\textup{(iv)}  If $(A,W)$ is a Pratt comonoid, and $u\leq v$
are elements of $W,$ then the pair $(A_{u,v},W_{u,v}),$
where $A_{u,v} = v-u\subseteq A,$ and
$W_{u,v}=\{x-u\mid x\in W$ with $u\leq x\leq v\},$
is also a Pratt comonoid.
\end{lemma}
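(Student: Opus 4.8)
The plan is to check, for each part, the two clauses of Definition~\ref{D.chu_cm}. The containments of $\emptyset$ and $A$ are immediate in every case (e.g.\ in~(i), $\emptyset=\neg A$ and $A=\neg\emptyset$; in~(iv), $\emptyset=u-u$ comes from $x=u$ and $A_{u,v}=v-u$ from $x=v$), so all the content is in the crossword-closure clause. The recurring device for parts~(i), (iii), and~(iv) is to convert a crossword $C$ over the newly-constructed structure into a crossword over the original $W$, apply the closure property of $(A,W)$, and then recover the diagonal we want; part~(ii) is even more direct.

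For~(ii), any $C\subseteq A\times A$ all of whose rows and columns lie in $\bigcap_i W_i$ is at once a crossword over every $(A,W_i)$, so its diagonal lies in each $W_i$, hence in the intersection. For~(i), given a crossword $C$ over $W^\neg$, I would pass to $\neg C=(A\times A)-C$: each of its rows and columns is the complement of the corresponding row or column of $C$, which lies in $W^\neg$, and hence lies in $W$. Thus $\neg C$ is a crossword over $W$, its diagonal lies in $W$, and that diagonal is the complement of the diagonal of $C$; so the diagonal of $C$ lies in $W^\neg$. The $T_1$ assertion follows by applying the $T_1$ property of $W$ to the pair $(b,a)$: an element of $W$ containing $b$ but not $a$ has complement in $W^\neg$ containing $a$ but not $b$.

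For~(iii), given a crossword $C'$ over $W'$ on $A'$, I would pull it back along $f\times f$, setting $C=\{(a,b)\mid (f(a),f(b))\in C'\}$. A direct computation identifies the $a$-th row of $C$ with $f^{-1}$ of the $f(a)$-th row of $C'$, and likewise for columns; since the rows and columns of $C'$ lie in $W'$, by definition of $W'$ their $f$-preimages lie in $W$, so the rows and columns of $C$ lie in $W$. Closure for $(A,W)$ then puts the diagonal of $C$ in $W$, and this diagonal equals $f^{-1}$ of the diagonal of $C'$, which is exactly the statement that the diagonal of $C'$ lies in $W'$.

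The main obstacle is~(iv), where the auxiliary crossword over $W$ must be constructed by hand rather than obtained by a pullback. I would first note that, since every member of $W_{u,v}$ is a subset of $v-u$ and $u\le x\le v$ forces $x=(x-u)\cup u$, the condition ``$R\in W_{u,v}$'' for $R\subseteq v-u$ is equivalent to ``$R\cup u\in W$''. Given a crossword $C\subseteq A_{u,v}\times A_{u,v}$ over $W_{u,v}$, whose $a$-th row I write as $R_a$, I would lift it to
\[
\hat C \;=\; C\,\cup\,(u\times A)\,\cup\,(A\times u)\ \subseteq\ A\times A.
\]
Because $A_{u,v}=v-u$ is disjoint from $u$, one checks that $\hat C$ restricts to $C$ on $A_{u,v}\times A_{u,v}$ and that the $a$-th row of $\hat C$ equals $A$ for $a\in u$, equals $R_a\cup u$ for $a\in v-u$, and equals $u$ for $a\in A-v$; each of these lies in $W$ (using the equivalence above for the middle case and $u,A\in W$ for the others), and symmetrically for columns. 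Hence $\hat C$ is a crossword over $W$, so its diagonal lies in $W$; and that diagonal is precisely $D\cup u$, where $D\subseteq v-u$ is the diagonal of $C$. Since $u\le D\cup u\le v$, the equivalence gives $D=(D\cup u)-u\in W_{u,v}$, completing the argument. The only delicate point throughout is this bookkeeping in~(iv), namely verifying that the three cases for rows (and columns) exhaust $A$ and introduce no spurious pairs on $A_{u,v}\times A_{u,v}$.
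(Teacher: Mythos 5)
Your proposal is correct and follows essentially the same route as the paper's proof: part~(ii) by intersecting closure conditions, part~(i) by complementing the crossword (the paper's ``interchange $\!0\!$'s and $\!1\!$'s''), part~(iii) by pulling back along $f\times f,$ and part~(iv) by padding $C$ to a crossword over $W.$ The only (immaterial) difference is in~(iv), where you pad with $(u\times A)\vee(A\times u)$ while the paper uses $(u\times v)\vee(v\times u);$ both yield rows and columns in $\{A,\,u,\,v,\,\emptyset\}\cup\{R\vee u\mid R\in W_{u,v}\}\subseteq W$ and the same diagonal $D\vee u.$
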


\begin{proof}[Sketch of proof]
We get (i)~by interchanging $\!0\!$'s and $\!1\!$'s in the conditions
defining a Pratt comonoid, and likewise in the $\!T_1\!$ condition.

Statement (ii)~holds because the condition for $(A,W)$ to be a
Pratt comonoid is a closure condition on $W,$ and for any
closure operator, an intersection of closed subsets is closed.

To see (iii), note that given any crossword $C'$ over $W',$ its
inverse image under $f\times f$ will be a crossword $C$ over $W,$
hence the diagonal thereof lies in $W,$ and that diagonal is the
inverse image of the diagonal of $C',$ which therefore lies in $W'.$

Finally, to see (iv), note that if $C\subseteq A_{u,v}\times A_{u,v}$
is a crossword over $W_{u,v},$ then $(u\times v)\vee(v\times u)\vee C$
will be a crossword over $W,$ hence its diagonal belongs
to $W,$ which translates to say
that the diagonal of $C$ belongs to $W_{u,v},$ as required.
\end{proof}

Part~(i) of the above lemma shows that when we prove a result
about Pratt comonoids, we can immediately get a dual statement
by applying that result to complements of words.
Likewise,~(iv) allows us to ``relativize'' any general result
about Pratt comonoids to yield a result about the set
of words $x\in W$ such that $u\leq x\leq v$
for given $u\leq v$ in~$W.$

Next, we note two easy ways of getting new words from old
within a given Pratt comonoid.

\begin{lemma}[{\cite[Proposition~2.5]{comonoid}}]\label{L.vee_wedge}
Let $(A,W)$ be a Pratt comonoid.
Then $W$ is closed under \textup{(}pairwise, hence finite\textup{)}
meets and joins.
That is, if $x,y\in W,$ then $x\wedge y$ and $x\vee y$ also
belong to $W.$
\end{lemma}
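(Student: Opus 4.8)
The plan is to show that $W$ is closed under meets and joins by constructing, for given $x,y\in W$, suitable crosswords whose rows and columns lie in $W$ and whose diagonals are $x\wedge y$ and $x\vee y$ respectively. Since Lemma~\ref{L.new_fr_old}(i) tells us that $W^\neg$ is again a Pratt comonoid, and since $\neg(x\wedge y)=(\neg x)\vee(\neg y)$, it suffices to handle just one of the two operations; I would prove closure under $\vee$ directly and then dualize to obtain closure under $\wedge$. So the real task is: given $x,y\in W$, build a crossword $C\subseteq A\times A$ all of whose rows and columns are members of $W$, such that $\{b\mid (b,b)\in C\}=x\vee y$.

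The key construction I would try is to lay $x$ and $y$ out so that each appears as some rows and as some columns, with the diagonal picking up exactly $x\vee y$. A natural first attempt is to take $C=(x\times A)\cup(A\times y)$, i.e. $(a,b)\in C$ iff $a\in x$ or $b\in y$; but then the $a$-th row is $A$ when $a\in x$ and $y$ when $a\notin x$ (both in $W$), the $b$-th column is $A$ when $b\in y$ and $x$ when $b\notin y$ (both in $W$), and the diagonal is $\{a\mid a\in x \text{ or } a\in y\}=x\vee y$, exactly as desired. Each row is thus either $A$ or $y$, each column either $A$ or $x$, and all four of these belong to $W$ by hypothesis together with condition~(i), so $C$ is a genuine crossword over $W$ and its diagonal $x\vee y$ lies in $W$.

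Once $\vee$ is established, closure under $\wedge$ follows formally: apply the $\vee$-result inside the Pratt comonoid $(A,W^\neg)$ to the words $\neg x,\neg y\in W^\neg$, obtaining $(\neg x)\vee(\neg y)\in W^\neg$, and then take complements, using $\neg((\neg x)\vee(\neg y))=x\wedge y$, to conclude $x\wedge y\in W$. Alternatively one can exhibit the dual crossword $C'=(x\times A)\cap(A\times y)$ directly, whose rows are $y$ or $\emptyset$ and columns are $x$ or $\emptyset$, with diagonal $x\wedge y$; either route works, and the pairwise result extends to arbitrary finite meets and joins by induction.

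I do not anticipate a serious obstacle here: the whole difficulty, such as it is, lies in guessing the right crossword, after which verifying that its rows, columns, and diagonal are what we claim is a routine unwinding of definitions. The only point requiring a moment's care is confirming that the ``degenerate'' rows and columns (the full set $A$, or the empty set $\emptyset$) are indeed available in $W$—this is exactly what condition~(i) of Definition~\ref{D.chu_cm} guarantees—and that the diagonal computation correctly yields $x\vee y$ rather than, say, $x\wedge y$ or some asymmetric expression. With the crossword in hand, the proof is essentially immediate.
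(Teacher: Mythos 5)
Your proof is correct and essentially the same as the paper's: the paper uses the crossword $x\times y$ (your $(x\times A)\cap(A\times y)$) to get $x\wedge y$ and then dualizes via $(A,W^\neg)$ for $x\vee y$, also noting $(x\times A)\vee(A\times y)$ as a direct alternative, so you have exactly the same two crosswords and the same dualization, merely in the opposite order. All your verifications of rows, columns, and diagonals are accurate, so nothing is missing.
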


\begin{proof}
Given $x,y\in W,$ we find that $x\times y$ is a crossword
on $W$ (every row gives either the word $\emptyset$ or
the word $y;$ every column gives either $\emptyset$ or
$x),$ and its diagonal gives the word $x\wedge y,$
so $W$ is closed under intersections.

That it is also closed under unions follows by dualization,
i.e., applying the above result to the comonoid $(A,W^\neg)$
constructed as in Lemma~\ref{L.new_fr_old}(i).
Alternatively, one can verify directly that $(x\times A)\vee(A\times y)$
is a crossword on $W$ having $x\vee y$ as diagonal.
\end{proof}

Some observations on the above results:

Though we have seen that any intersection of Pratt comonoid
structures on a set $A$ is again a Pratt comonoid structure,
the same is not true of unions, even pairwise unions.
For instance, if $A=\{0,1,2\},$ and we let $W_\leq$ be the set of all
down-subsets of $A,$ and $W_\geq$ the set of up-subsets of $A,$ then
each of these is a Pratt comonoid structure, but their union
is not, since it contains both $\{0,1\}$ and $\{1,2\},$ but
not $\{0,1\}\cap\{1,2\}.$

Of course, since the condition of being a Pratt comonoid structure
on $A$ is a closure condition, there is a least such
structure containing the union of two given structures.
But the resulting
closure operation can expand the given union enormously.
For instance, suppose we let $A=\Z,$ the set of integers, again let
$W_\leq$ and $W_\geq$ be the systems of down-subsets and up-subsets
of $A,$ and let $W$ be the least Pratt comonoid structure
containing $W_\leq\cup W_\geq.$
By taking intersections, we see that $W$ contains all
singletons $\{i\}$ $(i\in\Z).$
Hence for any subset $x\subseteq\Z,$
the set $\{(i,i)\mid i\in x\}\subseteq\Z\times\Z$ is a
crossword over $W,$ since every row or column is either empty
or a singleton.
So $W$ consists of all subsets $x\subseteq \Z.$
Thus, closing under taking diagonals of crosswords has carried the
countable set $W_\leq\cup W_\geq$ to a set of continuum cardinality.

We saw in Lemma~\ref{L.vee_wedge} that each
Pratt comonoid structure on a set $A$ is closed under
pairwise unions and intersections.
However, such structures need not be closed under infinite unions
and intersections, as may be seen from the
example~\eqref{d.*w&infty} above, where $W$ contains
all the finite down-subsets of $\omega,$ but not their union,
$\omega$ itself.

Though we do not in general get all infinite unions and intersections,
everything we do get can be expressed in terms of such operations:

\begin{lemma}\label{L.diag_is_veewedge}
Let $A$ be any set, and $C:A\times A\to 2$ any map.
Then the subset $z\subseteq A$ corresponding to the diagonal of $C$
can be written as a \textup{(}possibly infinite\textup{)}
union of \textup{(}possibly infinite\textup{)} intersections
of subsets of $A$ corresponding to rows and columns of $C.$
\end{lemma}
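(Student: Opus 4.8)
The plan is to exhibit an explicit formula for $z$ of the required shape. Write $\rho_a=\{b\mid (a,b)\in C\}$ for the $\!a\!$-th row and $\kappa_a=\{b\mid (b,a)\in C\}$ for the $\!a\!$-th column, and recall that the diagonal is $z=\{b\mid (b,b)\in C\}$. The two membership facts that drive everything are $b\in\rho_a\iff(a,b)\in C$ and $b\in\kappa_a\iff(b,a)\in C$; in particular $a\in\rho_a\iff a\in\kappa_a\iff a\in z$.

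First I would note that the naive candidate $\bigcup_a(\rho_a\cap\kappa_a)$ fails: it contains $z$ but may overshoot, since an element $b\in\rho_a\cap\kappa_a$ need not lie on the diagonal. The fix is to trim each term so that it lands inside $z$ while still capturing $a$. For each $a\in A$ I would set
$$Q_a\ =\ \kappa_a\ \cap\ \bigcap_{\substack{b\in\kappa_a\\ b\notin z}}\rho_b,$$
an intersection of one column with a family of rows (the empty intersection, when no such $b$ exists, being read as $\kappa_a$ itself). I would then claim $z=\bigcup_{a\in A}Q_a$.

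The verification splits into two short inclusions. For $Q_a\subseteq z$: if $c\in Q_a$ but $c\notin z$, then $c\in\kappa_a$ and $c\notin z$, so $\rho_c$ is one of the factors of $Q_a$, forcing $c\in\rho_c$, i.e.\ $(c,c)\in C$, i.e.\ $c\in z$ --- a contradiction. For $z\subseteq\bigcup_a Q_a$: given $a\in z$ we have $a\in\kappa_a$, and for every index $b$ appearing in the intersection we have $b\in\kappa_a$, hence $(b,a)\in C$, hence $a\in\rho_b$; thus $a$ survives every factor and $a\in Q_a$. Taking unions gives the claim, and since each $Q_a$ is by construction an intersection of rows and columns, $z$ is displayed as the desired union of intersections.

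The one genuinely non-obvious step is the choice of which rows to intersect in. The main obstacle is arranging simultaneously that (i) the trimming rows exclude every off-diagonal intruder and (ii) they never delete the target element $a$; restricting the index to $b\in\kappa_a$ is exactly what secures (ii), because $b\in\kappa_a$ is equivalent to $a\in\rho_b$, while the self-referential factor $\rho_c$ for an alleged intruder $c$ is what secures (i). I would emphasize that no closure properties of a Pratt comonoid are used here --- the identity holds for a completely arbitrary map $C:A\times A\to 2$.
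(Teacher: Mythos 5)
Your proof is correct, and it is a genuine (if close) variant of the paper's. Both proofs follow the same overall strategy: write $z$ as a union, over suitable points $a,$ of intersections of rows and columns that contain $a$ but are forced inside $z$ by a ``self-referential'' factor, using only the tautology $b\in\rho_b\iff b\in\kappa_b\iff b\in z.$ The decompositions, however, differ. The paper takes, for each $a\in z,$ the set $x_a$ equal to the intersection of \emph{all} rows and columns of $C$ containing $a,$ and proves $z=\bigcup_{a\in z}x_a$: for $b\in x_a,$ the factor $\rho_a$ (present because $a\in z$) gives $C(a,b)=1,$ whence $\kappa_b$ contains $a$ and is therefore also a factor, so $b\in\kappa_b,$ i.e.\ $b\in z.$ Your $Q_a=\kappa_a\cap\bigcap_{b\in\kappa_a,\,b\notin z}\rho_b$ instead installs the self-referential factors by hand: any would-be intruder $c$ is killed in one step by its own row $\rho_c,$ which you have deliberately placed among the factors. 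The trade-offs are minor but real: the paper's terms $x_a$ are symmetric in rows and columns and are defined without reference to $z$ (only the index set of the union mentions $z$), which makes the decomposition more canonical, whereas your index set $\{b\in\kappa_a\mid b\notin z\}$ refers to $z$ itself --- perfectly legitimate, since the lemma asserts only that \emph{some} expression of the required shape exists, but worth being aware of. Both versions serve the paper's downstream application equally well: in Corollary~\ref{C.fin_many}, when $C$ has only finitely many distinct rows and columns, either decomposition collapses to a finite union of finite intersections (your $Q_a$ is determined by the set $\kappa_a,$ so only finitely many distinct terms occur), which is all that is needed there. And you are right that no comonoid hypothesis is used; the identity holds for an arbitrary map $C:A\times A\to 2,$ exactly as in the paper.
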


\begin{proof}
For each $a\in A$ that occurs as a member of at least one row
or column of $C,$ let $x_a\subseteq A$ be the intersection of all
the rows and columns of $C$ that contain $a.$
Clearly, $a\in x_a.$
We claim that, in fact, $z=\bigvee_{a\in z} x_a.$
This will clearly imply the desired conclusion.

Indeed, for every $a\in z,$ the element $a$ lies in the above union,
namely, in the joinand indexed by $a.$
Conversely, if $b$ is in that union, this says it
lies in $x_a$ for some $a\in z.$
Looking at the $\!a\!$-th row of $C$ we conclude from the
definition of $x_a$ that since $C(a,a)=1,$ we have
$C(a,b)=1;$ and looking at the $\!b\!$-th column,
$C(a,b)=1$ similarly implies $C(b,b)=1.$
So $b\in z$ as required.
\end{proof}

The following easily verified observation (which does not
refer to diagonal words) will also be useful.

\begin{lemma}\label{L.2^*a}
If $A$ and $A'$ are sets, and $C$ a subset of $A\times A'$
which has precisely $\kappa$ distinct rows
\textup{(}resp.\ columns\textup{)} then it has at most $2^\kappa$
distinct columns \textup{(}resp.\ rows\textup{)}.

In particular, if $C$ has only finitely many distinct
rows \textup{(}columns\textup{)}, it has only finitely many
distinct columns \textup{(}rows\textup{)}.\qed
\end{lemma}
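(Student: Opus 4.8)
The plan is to exploit the fact that a column of $C$, regarded as a $\{0,1\}$-valued function on the index set $A$, cannot distinguish two indices that already index the same row. Concretely, I would first fix the convention that (for $C\subseteq A\times A'$) the rows are the subsets $\{b\in A'\mid (a,b)\in C\}$ of $A'$ indexed by $a\in A,$ and the columns are the subsets $\{a\in A\mid (a,b)\in C\}$ of $A$ indexed by $b\in A'.$ The two assertions of the lemma are interchanged by swapping the roles of $A$ and $A',$ so it suffices to bound the number of columns in terms of the number $\kappa$ of distinct rows.

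The key step is to introduce the equivalence relation $\sim$ on $A$ defined by $a_1\sim a_2$ if and only if the $a_1$-th row and the $a_2$-th row coincide, i.e.\ $(a_1,b)\in C\Leftrightarrow(a_2,b)\in C$ for all $b\in A'.$ By the definition of $\kappa,$ the quotient set $A/{\sim}$ has exactly $\kappa$ elements. The point is that every column is constant on the $\sim$-classes: if $a_1\sim a_2$ then their rows agree at every $b\in A',$ so in particular $(a_1,b)\in C$ iff $(a_2,b)\in C,$ which is precisely the statement that the $b$-th column takes the same value at $a_1$ and $a_2.$

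Consequently each column, viewed as a function $A\to\{0,1\},$ factors through the projection $A\to A/{\sim},$ and distinct columns induce distinct functions $A/{\sim}\to\{0,1\}.$ Since there are at most $2^{\card(A/\sim)}=2^\kappa$ such functions, $C$ has at most $2^\kappa$ distinct columns, as claimed. The ``in particular'' clause is then just the observation that $2^\kappa$ is finite whenever $\kappa$ is.

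I do not anticipate a genuine obstacle here: the only thing to get right is the bookkeeping that ``constant on row-classes'' is exactly what lets a column be read off from the $\kappa$-many row-types, and that the passage from columns to functions on $A/{\sim}$ is injective. If I preferred to avoid naming the equivalence relation, I could instead choose one representative index from each of the $\kappa$ row-classes and observe that a column is determined by its restriction to those representatives, which gives the same bound.
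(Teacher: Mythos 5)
Your proof is correct. The paper states Lemma~\ref{L.2^*a} without proof (it is introduced as an ``easily verified observation'' and closed with a \qed in the statement itself), and your argument supplies exactly the natural justification: each column, being constant on the classes of the ``same row'' equivalence relation, factors injectively through the $\kappa$-element quotient $A/{\sim}$, giving at most $2^\kappa$ columns, with the finite case following immediately.
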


\section{The case where \texorpdfstring{$A$}{A} is countable}\label{S.countable}

We sketch below the proof of the known result that every $\!T_1\!$
Pratt comonoid $(A,W)$ with countable base-set $A$ is discrete.
First, a general observation.

\begin{lemma}\label{L.finite}
If $(A,W)$ is a $\!T_1\!$ Pratt comonoid and $A_0$
a finite subset of $A,$ then for any subset $y$ of $A_0$
there exists an $x\in W$ with $x\cap A_0 = y.$
\end{lemma}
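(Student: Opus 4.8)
The plan is to build the desired word $x$ by combining separating words supplied by the $\!T_1\!$ condition, exploiting the fact that $W$ is closed under finite meets and joins (Lemma~\ref{L.vee_wedge}). Since $A_0$ is finite, both $y$ and its relative complement $A_0-y$ within $A_0$ are finite, so every meet and join I form below will be finite, and hence will stay inside~$W.$

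First, for each ordered pair $(a,b)$ with $a\in y$ and $b\in A_0-y,$ the $\!T_1\!$ condition~(iii) furnishes a word $w_{a,b}\in W$ with $a\in w_{a,b}$ and $b\notin w_{a,b}.$ Next, for each fixed $a\in y,$ I would set $v_a=\bigwedge_{b\in A_0-y} w_{a,b},$ a finite meet, so that $v_a\in W.$ By construction $a\in v_a$ (as $a$ lies in each meetand), while $v_a$ meets $A_0-y$ in the empty set (as each $b\in A_0-y$ is excluded by its own $w_{a,b}$); thus $v_a\cap A_0\subseteq y$ and contains $a.$

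Finally I would take $x=\bigvee_{a\in y} v_a,$ again a finite join, so that $x\in W.$ On the one hand $y\subseteq x,$ since each $a\in y$ lies in its own joinand $v_a;$ on the other hand no $b\in A_0-y$ lies in any $v_a,$ so $x\cap(A_0-y)=\emptyset.$ Intersecting with $A_0$ and combining these two observations yields $x\cap A_0=y,$ as required.

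I expect no real obstacle beyond bookkeeping: the finiteness of $A_0$ is precisely what keeps every meet and join inside $W,$ and the $\!T_1\!$ hypothesis is exactly what lets me separate the points of $y$ from those of $A_0-y.$ The only point calling for a moment's care is the handling of the degenerate cases $y=\emptyset$ and $y=A_0,$ which are covered by reading the empty join as $\emptyset$ and the empty meet as $A$—both members of $W$ by condition~(i).
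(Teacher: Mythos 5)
Your proof is correct and follows essentially the same route as the paper's: use the $\!T_1\!$ condition to get separating words, take finite meets to isolate points of $y$ from the unwanted points of $A_0,$ and finish with a finite join, all kept inside $W$ by Lemma~\ref{L.vee_wedge}. The only (immaterial) difference is that your meets run over $A_0-y$ rather than $A_0-\{a\},$ so your $v_a$ may capture other points of $y,$ which does no harm since the final join absorbs them anyway.
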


\begin{proof}
For all $a,b\in A_0$ with $a\neq b,$ the $\!T_1\!$ condition
allows us to choose an $x_{a,b}\in W$ containing $a$ but not $b.$
By Lemma~\ref{L.vee_wedge}, the set
$x_a=\bigwedge_{b\in A_0-\{a\}} x_{a,b}$ is a member of $W$
containing $a$ but no other member of $A_0.$
We see that $x=\bigvee_{a\in y} x_a$ has the desired property.
\end{proof}

We can now get:

\begin{theorem}[V.\,Pratt {\cite[2nd exercise on p.\,28]{chu}}]\label{T.countable}
If $(A,W)$ is a $\!T_1\!$ Pratt comonoid, and $A$ is countable,
then $(A,W)$ is discrete.
\end{theorem}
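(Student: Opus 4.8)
The plan is to show directly that every subset $z\subseteq A$ lies in $W,$ by exhibiting a crossword over $W$ whose diagonal word is exactly $z.$ Since $A$ is countable we may fix an enumeration $A=\{a_0,a_1,a_2,\dots\}$ (if $A$ is finite the same argument works and simply terminates, and if $A=\emptyset$ the claim is trivial). One tempting shortcut is to try to show that every singleton $\{a\}$ belongs to $W$: then, as in the integer example of \S\ref{S.quick}, the ``diagonal crossword'' $\{(a,a)\mid a\in z\}$ would have every row and column empty or a singleton, exhibiting $z$ as a diagonal. But Lemma~\ref{L.finite} only lets us pin a word down on a \emph{finite} set, whereas isolating a single point would require an infinite intersection, which $W$ need not admit. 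So the finite control of Lemma~\ref{L.finite} must be leveraged more carefully.

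The key step is to build a \emph{symmetric} crossword $C$ inductively, choosing one word of $W$ per stage. I would construct words $w_0,w_1,\dots\in W$ so that, on setting $C(a_i,a_j)=1\iff a_j\in w_i,$ the resulting matrix $C$ is symmetric and has diagonal $z.$ At stage $n,$ having chosen $w_0,\dots,w_{n-1},$ apply Lemma~\ref{L.finite} with $A_0=\{a_0,\dots,a_n\}$ to produce $w_n\in W$ realizing the prescribed finite pattern
\[ a_j\in w_n \iff a_n\in w_j \quad(j<n), \qquad a_n\in w_n \iff a_n\in z. \]
The first batch of conditions enforces symmetry against the earlier words, and the last fixes the diagonal entry at $a_n.$ Since this describes a specific subset of the finite set $A_0,$ Lemma~\ref{L.finite} furnishes such a $w_n.$

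It then remains to verify that $C$ is genuinely a crossword over $W$ with diagonal $z.$ The symmetry conditions imposed at the successive stages yield $a_i\in w_j\iff a_j\in w_i$ for all $i,j,$ so $C(a_i,a_j)=C(a_j,a_i);$ consequently both the $\!a_i\!$-th row $\{a_j\mid a_j\in w_i\}$ and the $\!a_i\!$-th column $\{a_j\mid a_i\in w_j\}$ of $C$ equal $w_i\in W.$ Thus every row and column of $C$ lies in $W,$ so $C$ is a crossword, and by condition~(ii) its diagonal belongs to $W.$ That diagonal is $\{a_i\mid a_i\in w_i\}=\{a_i\mid a_i\in z\}=z.$ Hence $z\in W;$ as $z\subseteq A$ was arbitrary, $W=2^A$ and $(A,W)$ is discrete.

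I expect the main obstacle to be exactly the mutual consistency of rows and columns: a word supplied by Lemma~\ref{L.finite} is uncontrolled off the chosen finite set, so naive independent choices would make the $\!a_i\!$-th row and the $\!a_i\!$-th column disagree and fail to be members of $W.$ Forcing the crossword to be symmetric---collapsing the two unknown sequences (rows and columns) into a single sequence $w_n$ tied together by the constraints $a_j\in w_n\iff a_n\in w_j$---is what allows the finite-approximation lemma to suffice, and is the crux of the argument.
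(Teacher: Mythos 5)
Your proof is correct and follows essentially the same route as the paper's: a recursive construction of a crossword with diagonal $z,$ using Lemma~\ref{L.finite} at each stage to extend a finite partial pattern over the enumerated countable base set. The only difference is a pleasant streamlining: by forcing the crossword to be symmetric you collapse the paper's two sequences (candidate rows $x_n$ and candidate columns $y_n,$ kept mutually consistent on their finite overlap) into the single sequence $w_n,$ so that each row automatically equals the corresponding column and the consistency bookkeeping disappears.
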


\begin{proof}[Sketch of proof \textup{(after Mark G.\ Pleszkoch $\!=\!$ ``Mark Aujus'' \cite[Solution to puzzle 1.4]{puzzle})}]
Assume without loss of generality that $A=\omega.$
Take any $z\subseteq A,$ and assume inductively that for some $n$
we have found $x_0,\dots,x_{n-1},\,y_0,\dots,y_{n-1}\in W$
which are possible candidates for
the first $n$ rows and $n$ columns of a crossword having $z$ as
diagonal; i.e., such that the partial crossword formed by using the
$\!x_m\!$'s as its first $n$ rows and
the partial crossword formed by using the
$\!y_m\!$'s as its first $n$ columns agree
on their $n\times n$ intersection,
and the diagonal of that intersection yields the first
$n$ entries of $z.$

Now using the preceding lemma, with $A_0=\{0,\dots,n\},$
we can find $x_n,y_n\in W$ which extend our partial crossword;
i.e., such that the first $n$ entries
of $x_n$ are the entries of $y_0,\dots,y_{n-1}$ in the position
indexed by $n,$ while its next entry is the entry of $z$ needed in the
corresponding position on the diagonal;
and such that $y_n$ has the symmetric property.
This construction, continued recursively, leads to a full
crossword over $W$ with $z$ as diagonal.
\end{proof}

Now, on to new results.

\section{The complement-closed case}\label{S.complemented}

We shall prove in this section that if $(A,W)$ is a $\!T_1\!$ Pratt
comonoid, and $W$ is closed under complements, then
$(A,W)$ is discrete.
We begin with some observations on not
necessarily $\!T_1\!$ Pratt comonoids.

Though we have seen that for $(A,W)$ a Pratt comonoid,
$W$ need not be closed under infinite unions,
we claim that it is closed under unions
of families of subsets that are pairwise disjoint.
Here is a still more general statement.

\begin{lemma}\label{L.near_disjoint}
Suppose $(A,W)$ is a Pratt comonoid, and $x_i$ $(i\in I)$
are elements of $W$ such that for each $a\in A,$
only finitely many of the $x_i$ contain $a.$
Then $\bigvee_{i\in I} x_i\in W.$
\end{lemma}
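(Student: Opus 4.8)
The plan is to invoke condition~(ii) directly, by exhibiting a single crossword over $W$ whose diagonal is exactly $z=\bigvee_{i\in I}x_i$. The natural candidate is the symmetric relation $C\subseteq A\times A$ defined by declaring $(a,b)\in C$ precisely when some $x_i$ contains both $a$ and $b$. Its diagonal is $\{a\mid a\in x_i\text{ for some }i\}=\bigvee_{i\in I}x_i=z$, so once $C$ is shown to be a crossword over $W$ we are done; the only thing left to check is that every row and every column of $C$ lies in $W$.

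For a fixed $a$, the $\!a\!$-th row of $C$ is $\{b\mid\exists i,\ a\in x_i\text{ and }b\in x_i\}=\bigvee_{i:\,a\in x_i}x_i$, and this is where the point-finiteness hypothesis does its work: by assumption only finitely many $x_i$ contain $a$, so this join is a \emph{finite} join of members of $W$, and hence lies in $W$ by Lemma~\ref{L.vee_wedge}. (The degenerate case in which no $x_i$ contains $a$ is harmless, since the empty join is $\emptyset\in W$.) Because the defining condition on $C$ is symmetric in $a$ and $b$, the $\!a\!$-th column equals the $\!a\!$-th row, so it too lies in $W$. Thus $C$ is a genuine crossword over $W$, and condition~(ii) yields $z\in W$.

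I expect the only real obstacle to be choosing the right crossword, rather than any subsequent verification, which is routine. One is tempted to proceed instead via Lemma~\ref{L.diag_is_veewedge}, writing $z$ as a union of intersections of rows and columns; but that route would require $W$ to be closed under the relevant (possibly infinite) intersections, which we have seen it need not be. The decisive observation is that point-finiteness converts each would-be infinite row into a finite join, so that closure under finite joins (Lemma~\ref{L.vee_wedge}) suffices and no infinitary closure property of $W$ is invoked. This also makes transparent why the weaker ``pairwise disjoint'' assertion mentioned just before the lemma is a special case: disjointness forces each row of $C$ to be either a single $x_i$ or $\emptyset$.
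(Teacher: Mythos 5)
Your proof is correct and coincides with the paper's own argument: your relation $C$ is exactly the crossword $\bigvee_{i\in I}(x_i\times x_i)$ used there, and the verification---point-finiteness makes each row a finite join, which lies in $W$ by Lemma~\ref{L.vee_wedge}, with columns handled by symmetry---is the same. No gaps.
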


\begin{proof}
Let $u=\bigvee_{i\in I}(x_i\times x_i)\subseteq A\times A.$
For each $a\in A,$ the $\!a\!$-th row (respectively,
the $\!a\!$-th column)
of $u$ is the union of finitely many of the $x_i,$ namely,
those that contain $a.$
Hence, since $W$ is closed under finite
unions, $u$ is a crossword over $W,$ hence its diagonal,
$\bigvee_{i\in I} x_i,$ indeed lies in $W.$
\end{proof}

\begin{corollary}\label{C.disjoint}
If $(A,W)$ is a Pratt comonoid, then $W$ is closed under
forming unions of disjoint families \textup{(}of arbitrary
cardinality\textup{)}.\qed
\end{corollary}

\begin{corollary}\label{C.complemented}
If $(A,W)$ is a Pratt comonoid such that $W$ is closed under
complementation, then $W$ is closed under
forming arbitrary unions.

Hence by duality, $W$ is also closed under
forming arbitrary intersections.
\end{corollary}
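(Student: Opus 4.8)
The plan is to build up an arbitrary union inside $W$ by transfinite recursion. Note first that under the hypothesis, $W$ is closed under all finite Boolean operations: it contains $\emptyset$ and $A$, is closed under finite joins and meets by Lemma~\ref{L.vee_wedge}, and under complementation by assumption. The tempting shortcut is to reduce an arbitrary union to a disjoint one and invoke Corollary~\ref{C.disjoint}: given $x_i\in W$ $(i\in I)$, well-order $I$ and set $y_i=x_i-\bigvee_{j<i}x_j$, so that the $y_i$ are pairwise disjoint with $\bigvee_i y_i=\bigvee_i x_i$. But this is circular, since the partial union $\bigvee_{j<i}x_j$ is itself an arbitrary union, not yet known to lie in $W$; hence we cannot yet complement it to conclude $y_i\in W$. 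The fix is to prove that \emph{all} the partial unions lie in $W$ simultaneously, by induction along the well-ordering.

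Concretely, I would fix an enumeration $\{x_\beta\mid\beta<\lambda\}$ of the given family and, for each ordinal $\alpha\le\lambda$, set $u_\alpha=\bigvee_{\beta<\alpha}x_\beta$; the goal is $u_\lambda\in W$. I would prove $u_\alpha\in W$ for all $\alpha\le\lambda$ by transfinite induction. The base case $u_0=\emptyset\in W$ is immediate, and the successor case $u_{\gamma+1}=u_\gamma\vee x_\gamma$ follows from the inductive hypothesis together with closure under pairwise joins (Lemma~\ref{L.vee_wedge}).

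The one nontrivial point — and the step I expect to be the real obstacle — is the limit case, where $\alpha$ is a limit ordinal and $u_\alpha=\bigvee_{\beta<\alpha}u_\beta$ is again an arbitrary union, now of a chain already known to lie in $W$. Here I would disjointify the \emph{chain} rather than the original family: each point of $u_\alpha$ lies in some earliest $u_\beta$, and since $u_0=\emptyset$ and the chain is continuous at limits below $\alpha$, that earliest $\beta$ cannot be a limit, so it is a successor $\gamma+1$; thus $u_\alpha$ is the disjoint union over $\gamma<\alpha$ of the consecutive differences $u_{\gamma+1}-u_\gamma=u_{\gamma+1}\wedge\neg u_\gamma$. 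Each such difference lies in $W$, for by the inductive hypothesis $u_\gamma,u_{\gamma+1}\in W$, whence $\neg u_\gamma\in W$ by complement-closure and the meet lies in $W$ by Lemma~\ref{L.vee_wedge}. As these differences are pairwise disjoint, Corollary~\ref{C.disjoint} yields $u_\alpha\in W$, completing the induction and hence the proof that $W$ is closed under arbitrary unions. Finally, closure under arbitrary intersections follows by duality: for $x_i\in W$ we have $\neg x_i\in W$, so $\bigvee_i\neg x_i\in W$ by what was just shown, whence $\bigwedge_i x_i=\neg\bigvee_i\neg x_i\in W$.
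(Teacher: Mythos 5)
Your proof is correct, but note that the ``tempting shortcut'' you reject as circular is essentially the paper's own proof; the paper breaks the circularity differently from you, by inducting on the \emph{cardinality} of the index set rather than along a single well-ordering. There, one assumes $W$ closed under unions of families indexed by sets of cardinality $<\kappa$ (base case: finite unions, Lemma~\ref{L.vee_wedge}), takes a family indexed by the cardinal $\kappa$ itself, and observes that each partial union $\bigvee_{\gamma<\beta}x_\gamma$ $(\beta<\kappa)$ is a union over an index set of cardinality $<\kappa,$ hence lies in $W$ by the inductive hypothesis; so $y_\beta=x_\beta-\bigvee_{\gamma<\beta}x_\gamma$ lies in $W$ by complementation, and Corollary~\ref{C.disjoint} is applied once, to the disjointified original family. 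Your fix --- a single transfinite induction showing every partial union $u_\alpha$ lies in $W,$ with successor steps by Lemma~\ref{L.vee_wedge} and limit steps by splitting the chain into the pairwise disjoint consecutive differences $u_{\gamma+1}-u_\gamma$ --- is equally valid; the points it needs (continuity of the chain at limit ordinals, so that the least index containing a given point is a successor; membership of each difference in $W;$ pairwise disjointness) all check out. The trade-off: the paper's cardinal induction needs no successor/limit case distinction and invokes Corollary~\ref{C.disjoint} once per family, while yours avoids the stratification by cardinality at the cost of re-running the disjointification at every limit ordinal. Both arguments rest on the same two pillars --- complementation of partial unions, and closure under disjoint unions --- and neither is more general than the other.
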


\begin{proof}
By Lemma~\ref{L.vee_wedge},
$W$ is closed under forming unions of finite families.
Let $\kappa$ be an infinite cardinal, assume inductively that
$W$ is closed under unions of families indexed
by sets of cardinality $<\kappa,$ and let $x_\beta$ $(\beta\in\kappa)$
be a $\!\kappa\!$-indexed family of members of $W.$

For each $\beta\in\kappa,$ our inductive hypothesis tells
us that $\bigvee_{\gamma<\beta} x_\gamma\in W,$ hence
since $W$ is closed under complementation,
the set $y_\beta=x_\beta-\bigvee_{\gamma<\beta} x_\gamma$
belongs to $W.$
The $y_\beta$ are easily seen to be pairwise disjoint and
to have union $\bigvee_{\beta\in\kappa} x_\beta.$
(Namely, each $a\in\bigvee_{\beta\in\kappa} x_\beta$
belongs to $y_\beta$ for $\beta$ the least ordinal
with $a\in x_\beta.)$
So by Corollary~\ref{C.disjoint}, that union belongs to $W.$
\end{proof}

We now get:

\begin{theorem}\label{T.complemented}
If $(A,W)$ is a $\!T_1\!$ Pratt comonoid such that $W$ is closed
under forming complements, then $(A,W)$ is discrete
\textup{(}i.e., $W$ is the set of all subsets of $A).$
\end{theorem}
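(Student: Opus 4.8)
The plan is to combine the $T_1$ condition with the strong closure properties established for the complement-closed case. By Corollary~\ref{C.complemented}, since $W$ is closed under complementation, $W$ is closed under arbitrary unions and arbitrary intersections. This is the key leverage: we no longer have to build crosswords by hand, because we can freely take unions and intersections of elements already known to lie in $W$.

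First I would isolate singletons. For any point $a\in A$, the $T_1$ condition says that for every $b\neq a$ there is a word $x_{a,b}\in W$ containing $a$ but not $b$. Taking the (now permissible) arbitrary intersection $\bigwedge_{b\neq a} x_{a,b}\in W$ yields a word containing $a$ but no other point of $A$, i.e.\ exactly the singleton $\{a\}$. So every singleton $\{a\}$ lies in $W$. (This is the infinite analogue of Lemma~\ref{L.finite}, but where the finiteness restriction is removed precisely because complement-closure buys us arbitrary intersections.)

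Once every singleton is in $W$, I would take, for an arbitrary subset $z\subseteq A$, the arbitrary union $z=\bigvee_{a\in z}\{a\}$. By Corollary~\ref{C.complemented} this union again lies in $W$. Since $z$ was an arbitrary subset of $A$, we conclude $W=2^A$, so $(A,W)$ is discrete.

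I expect essentially no obstacle here: the hard analytic work was already done in Corollary~\ref{C.complemented}, whose inductive passage from finite to infinite unions (via the disjointification trick and Corollary~\ref{C.disjoint}) is the real engine. Once arbitrary intersections are available, the $T_1$ separation axiom immediately produces singletons, and arbitrary unions immediately rebuild every subset. The only thing to double-check is the degenerate case $|A|\leq 1$, where the $T_1$ condition is vacuous but $W$ trivially contains $\emptyset$ and $A$ by~(i), so discreteness still holds.
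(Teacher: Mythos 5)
Your proposal is correct and follows essentially the same route as the paper: use Corollary~\ref{C.complemented} to get closure under arbitrary intersections (producing each singleton $\{a\}$ from the $T_1$ condition) and under arbitrary unions (rebuilding every subset from singletons). The paper phrases the first step as intersecting \emph{all} members of $W$ containing $a$ rather than a chosen family $\{x_{a,b}\}_{b\neq a}$, but this is the same argument.
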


\begin{proof}
Because $(A,W)$ is $\!T_1,\!$ for each $a\in A$ the intersection of
all members of $W$ that contain $a$ is $\{a\},$
so by the final assertion of
Corollary~\ref{C.complemented}, $W$ contains every singleton.
Since every subset of $A$ is a union of singletons, another application
of that corollary shows that $W$ contains every subset of $A.$
\end{proof}

\section{An interesting non-\texorpdfstring{$\!T_1\!$}{T1} Pratt comonoid structure on \texorpdfstring{$2^\omega$}{2\string^omega}}\label{S.2^*w}

In the first paragraph of \S\ref{S.quick}, we noted that easy
examples of Pratt comonoids $(A,W)$ in which $W$ was not the
set of all subsets of $A$ could be gotten by starting
with any partially ordered set $A,$ and letting $W$ be the
set of all its down-sets; and
in the next paragraph, we noted a case where such an $A$
admitted a slightly smaller comonoid structure $W,$ determined
by a sort of ``continuity'' condition.
Below, we construct another example of a sub-comonoid
of the Pratt comonoid arising from a partially ordered set,
which differs from it much more strikingly:  The partially
ordered set $A$ we start with will be $2^\omega,$ ordered
by inclusion; the partially
ordered set of {\em all} its down-sets can be shown to
have cardinality $2^{2^{\aleph_0}},$ but our $W$ will be countable.
This example will be a key ingredient in our construction,
in the next section, of a non-discrete $\!T_1\!$ example.

Actually, we will construct the Pratt comonoid of this
section as a sub-comonoid
of the comonoid of all {\em up-sets} of $2^\omega,$ i.e., families of
subsets closed under enlargement.
The up-sets of any partially ordered set $A$ form a
Pratt comonoid structure on $A$ for the same
reason that the down-sets do,
and since the partially ordered set $2^\omega$ is isomorphic
to its opposite, the two comonoids are isomorphic
(cf.\ Lemma~\ref{L.new_fr_old}(i)).
We will use up-sets because it will be conceptually simpler to
take for the building blocks of our construction the
up-sets $e_n$ $(n\in\omega)$ consisting of all subsets of $\omega$
that contain $n,$ rather than the down-sets given by their complements.

Recall that for every set $A,$ the set $2^A$ of all subsets of $A,$
regarded as a direct product of copies of the discrete topological
space $2,$ is a compact Hausdorff space (compact by
Tychonoff's Theorem).
A subbasis for its open sets
is given by the sets $\{x\in 2^A\mid a\in x\}$ for $a\in A,$
and their complements, $\{x\in 2^A\mid a\notin x\}.$
We shall call this topology the {\em natural topology} on $2^A.$

In particular, both $2^\omega$ and $2^{2^\omega}$ have such topologies;
note that the definition of the topology on the latter set uses
only the set-structure of the former, and ignores its topology.
(But a relation between the topologies of these two sets
will be key to the proof of the final result of this section.)

We start with some observations on a general partially
ordered set $A.$
Recall that if $(A,\preccurlyeq)$ is a partially ordered set
and $a\in A,$ then ${\uparrow}(a)=\{b\in A\mid b\succcurlyeq a\}$
is called the {\em principal up-set} determined by $a.$
Likewise ${\downarrow}(a)=\{b\in A\mid b\preccurlyeq a\}$
is called the {\em principal down-set} determined by $a.$
(One normally calls these the principal up-set and principal down-set
{\em generated} by $a,$ but we will use ``determined'' to
avoid confusion with comonoids generated by sets of subsets.)

\begin{lemma}\label{L.isolated}
Let $A$ be a set given with a partial ordering $\preccurlyeq,$
and let $U_\preccurlyeq(A)\subseteq 2^A$ denote the set of up-sets
of $A,$ with the topology induced by the natural topology on $2^A.$
Then the following conditions on an element $x\in U_\preccurlyeq(A)$
are equivalent.

\textup{(i)}  $x$ is an isolated point of the topological
space $U_\preccurlyeq(A);$
i.e., $x$ is not in the closure of $U_\preccurlyeq(A)-\{x\}.$

\textup{(ii)}  $x$ is both the union of a finite \textup{(}possibly
empty\textup{)} family of principal up-sets, and
the intersection of a finite \textup{(}possibly
empty\textup{)} family of complements of principal down-sets.
\end{lemma}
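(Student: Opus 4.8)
The plan is to prove the equivalence by analyzing the topology on $U_\preccurlyeq(A)$ concretely in terms of subbasic neighborhoods. Recall that the natural topology on $2^A$ has a subbasis consisting of the sets $\{x \mid a \in x\}$ and $\{x \mid a \notin x\}$, so a basic open neighborhood of a point $x$ is determined by specifying, for finitely many elements $a_1,\dots,a_k \in A$, which of them lie in $x$ and which do not; that is, a basic neighborhood has the form $\{x' \mid a_i \in x' \text{ iff } a_i \in x, \ 1 \le i \le k\}$. The point $x$ is isolated in $U_\preccurlyeq(A)$ precisely when some such basic neighborhood, intersected with $U_\preccurlyeq(A)$, contains $x$ alone; equivalently, when finitely many membership constraints on $x$, together with the up-set condition, force an up-set to equal $x$.

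First I would prove the implication (ii)$\implies$(i). Suppose $x = {\uparrow}(a_1) \vee \dots \vee {\uparrow}(a_m)$ and also $x = \neg{\downarrow}(b_1) \wedge \dots \wedge \neg{\downarrow}(b_n)$ (with the conventions that an empty join is $\emptyset$ and an empty meet is $A$). The first description says $x$ is the least up-set containing $a_1,\dots,a_m$, so any up-set $x'$ with $a_1,\dots,a_m \in x'$ satisfies $x' \supseteq x$. The second description says $b_1,\dots,b_n \notin x$ and, since $x$ is the largest up-set avoiding all of $b_1,\dots,b_n$ (the complement of a down-set is an up-set, and $\neg{\downarrow}(b_j)$ is the largest up-set not containing $b_j$), any up-set $x'$ avoiding all the $b_j$ satisfies $x' \subseteq x$. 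Hence the finite constraint ``$a_1,\dots,a_m \in x'$ and $b_1,\dots,b_n \notin x'$'' pins down $x' = x$ among up-sets, exhibiting a subbasic neighborhood isolating $x$.

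Next I would prove (i)$\implies$(ii), which I expect to be the main obstacle. Isolation gives finitely many elements $a_1,\dots,a_m$ required to be in $x$ and $b_1,\dots,b_n$ required to be out of $x$, such that no other up-set meets these constraints. I would first show that these constraints force $x = {\uparrow}(a_1) \vee \dots \vee {\uparrow}(a_m)$: the right-hand side is an up-set containing all the $a_i$, and it is contained in $x$ (as $x$ is an up-set containing each $a_i$); if the containment were proper, I would need to produce a second up-set satisfying all the constraints, contradicting isolation. The delicate point is handling the ``$\notin$'' constraints simultaneously; I would argue that ${\uparrow}(a_1) \vee \dots \vee {\uparrow}(a_m)$ already avoids all the $b_j$ (since it is contained in $x$), so it satisfies every constraint and hence, by uniqueness, equals $x$. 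The dual argument, working with the complements-of-down-sets description and using that $\neg{\downarrow}(b_j)$ is the largest up-set omitting $b_j$, gives $x = \neg{\downarrow}(b_1) \wedge \dots \wedge \neg{\downarrow}(b_n)$: this intersection is an up-set omitting each $b_j$ hence contains $x$, and if it strictly contained $x$ it would furnish a distinct up-set meeting all constraints. Thus both representations in (ii) hold.

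The step I anticipate as trickiest is making the uniqueness arguments airtight when the isolating neighborhood mixes membership and non-membership conditions, and confirming the degenerate cases (empty families yielding $x = \emptyset$ or $x = A$) behave correctly. The conceptual key throughout is the pair of extremality facts: ${\uparrow}(a)$ is the smallest up-set containing $a$, and $\neg{\downarrow}(b)$ is the largest up-set not containing $b$. Once these are in hand, the two implications are essentially two sides of the same coin, with (i)$\implies$(ii) requiring the genuine work of translating a finite topological constraint into these order-theoretic extrema.
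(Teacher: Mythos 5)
Your proposal is correct and follows essentially the same route as the paper's proof: the same reduction of isolation to a finite intersection of subbasic membership/non-membership constraints, and the same two extremality facts (${\uparrow}(a)$ is the least up-set containing $a$, and $\neg{\downarrow}(b)$ is the greatest up-set omitting $b$), with uniqueness of the constrained up-set forcing both representations in (ii). The only difference is cosmetic: where the paper argues that the least and greatest constrained up-sets must coincide lest the constraint set fail to be a singleton, you verify directly that each of the two extremal up-sets satisfies all the constraints and hence equals $x$ — the same argument in a slightly more direct form.
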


\begin{proof}
To prove (ii)$\implies$(i), note that
if $x$ is a union ${\uparrow}(a_0)\vee\dots\vee{\uparrow}(a_{m-1}),$
then it is the smallest (under inclusion) element of
$U_\preccurlyeq(A)$ containing all of $a_0,\dots,a_{m-1}.$
Likewise, if it is an intersection
$\neg{\downarrow}(b_0)\wedge\dots\wedge\linebreak[1]
\neg{\downarrow}(b_{n-1}),$ then it is the largest element of
$U_\preccurlyeq(A)$ not containing any of $b_0,\dots,b_{n-1}.$
These conditions together make it the {\em unique} element of
$U_\preccurlyeq(A)$ containing each of the $a_i$ and none of the $b_j.$
Now the property of containing or not containing a specified
element of $A$ defines an open subset of $2^A,$ hence
of $U_\preccurlyeq(A).$
Thus, intersecting the $m+n$ open sets arising
from the above description, we get an open subset of $U_\preccurlyeq(A)$
having $x$ as its only point; so $x$ is isolated.

Conversely, since the subsets of $U_\preccurlyeq(A)$ defined by the
conditions of containing or not containing a given element of $A$ form
a subbasis of its open sets, if $x$ is isolated it must be
the unique point in a finite intersection of such sets;
i.e., the unique up-set that contains all
members of a finite family of points $a_0,\dots,a_{m-1}$
and no members of another finite family $b_0,\dots,b_{n-1}.$
It is easy to see that there is a least up-set containing
$a_0,\dots,a_{m-1}$ (namely,
${\uparrow}(a_0)\vee\dots\vee{\uparrow}(a_{m-1}))$
and a greatest containing none of $b_0,\dots,b_{n-1}$ (namely,
$\neg{\downarrow}(b_0)\wedge\dots\wedge\neg{\downarrow}(b_{n-1})).$
Given that the families of up-sets defined by these two properties
have nonempty intersection, if the least member of one family
and the greatest member of the other did not coincide,
then the intersection of the two families would not be a singleton.
So they do coincide, giving a description of $x$ as in~(ii).
\end{proof}

(M.\,Ern\'{e} has kindly pointed out to us that the above result
can be deduced from the general theory of continuous
lattices \cite{GHKLMS1980}, \cite{GHKLMS2003}.
Namely, in any lattice which is {\em bicontinuous}
in the sense of \cite[Chapter~VII]{GHKLMS2003}, the points
that are isolated in the bi-Scott topology
are those that are both {\em isolated from above} and
{\em isolated from below} in the sense of \cite[Chapter~I]{GHKLMS2003}.
A subclass of the bicontinuous lattices are the
{\em superalgebraic} lattices \cite{abc} \cite{ME}, which are,
up to isomorphism, the up-set lattices of partially
ordered sets $A;$ and in these, the elements isolated
from below are the finitely generated upsets,
those isolated from above are the complements
of finitely generated down-sets, and the bi-Scott topology
agrees with the topology induced by the natural topology on $2^A,$
yielding the statement of the lemma.)

We now apply the above lemma to the case where $A$ is
the set $2^\omega,$ partially ordered by inclusion.
(We could allow any set in place of $\omega,$ but we shall
see in \S\ref{S.*P_Pi} that this example can be generalized in other
ways; so we will just consider here the case we are about to use.)
Conditions~(i) and~(ii) below are as in the lemma;~(iii) is what is new.

\begin{corollary}\label{C.isolated}
Let $A=2^\omega,$ partially ordered by inclusion, $\subseteq,$
and $U_\subseteq(A)\subseteq 2^A$ its set of up-sets.
For each natural number $n,$ let $e_n\in U_\subseteq(A)$
denote the set of all subsets of $\omega$ containing $n.$
Then the following conditions on an element $x\in U_\subseteq(A)$
are equivalent.

\textup{(i)} $x$ is an isolated point of $U_\subseteq(A)$
under the natural topology.

\textup{(ii)}  $x$ is both the union of a finite \textup{(}possibly
empty\textup{)} family of principal up-sets, and
the intersection of a finite \textup{(}possibly
empty\textup{)} family of complements of principal down-sets of $A.$

\textup{(iii)} $x$ lies in the lattice generated
by $\{e_n\mid n\in\omega\}\cup\{\emptyset,A\};$ i.e., the closure of
that set under pairwise unions and intersections.
\end{corollary}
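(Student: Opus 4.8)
Since Lemma~\ref{L.isolated} already gives the equivalence (i)$\iff$(ii), the plan is to prove the equivalence (ii)$\iff$(iii). Everything hinges on two dual descriptions of the generators: as $e_n$ consists of the subsets of $\omega$ containing $n,$ we have both $e_n={\uparrow}(\{n\})$ and $e_n=\neg{\downarrow}(\omega-\{n\}),$ and more generally, for finite $F\subseteq\omega,$ we get $\bigwedge_{n\in F} e_n={\uparrow}(F)$ while $\bigvee_{n\in F} e_n=\neg{\downarrow}(\omega-F).$ I would verify these by a one-line direct computation (namely, $c\in\bigvee_{n\in F}e_n$ iff $c\cap F\neq\emptyset$ iff $c\not\subseteq\omega-F,$ and dually for the meet).

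For (iii)$\implies$(ii), which I expect to be routine, I would use that $2^A$ is a Boolean, hence distributive, lattice, so that every element of the sublattice $L$ generated by $\{e_n\}\cup\{\emptyset,A\}$ admits both a disjunctive and a conjunctive normal form, say $x=\bigvee_{i<m}\bigwedge_{n\in F_i}e_n=\bigwedge_{i<k}\bigvee_{n\in G_i}e_n$ with all $F_i,G_i$ finite. Substituting the two identities above turns the first form into a finite union $\bigvee_i{\uparrow}(F_i)$ of principal up-sets and the second into a finite intersection $\bigwedge_i\neg{\downarrow}(\omega-G_i)$ of complements of principal down-sets; these are precisely the two clauses of~(ii). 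The empty families are covered by the identifications $\emptyset=\neg{\downarrow}(\omega)$ and $A={\uparrow}(\emptyset).$

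The substance, and the step I expect to be the main obstacle, is (ii)$\implies$(iii). Given $x={\uparrow}(a_0)\vee\dots\vee{\uparrow}(a_{m-1})=\neg{\downarrow}(b_0)\wedge\dots\wedge\neg{\downarrow}(b_{k-1}),$ I would first discard any $a_i$ that contains another $a_{i'},$ so that the $a_i$ form an antichain (equivalently, are the minimal elements of $x$). It then suffices to show each $a_i$ is finite, since then $x=\bigvee_i{\uparrow}(a_i)=\bigvee_i\bigwedge_{n\in a_i}e_n$ lies in $L.$ Setting $d_j=\neg b_j,$ the second representation says that $c\in x$ iff $c$ meets every $d_j.$ Now fix a generator $a=a_i$: since $a\in x$ it meets each $d_j,$ so selecting one point of $a\cap d_j$ for each of the finitely many $j$ yields a finite $F\subseteq a$ that still meets every $d_j,$ whence $F\in x.$ Then $F\supseteq a'$ for some generator $a',$ and $a'\subseteq F\subseteq a$ combined with the antichain property forces $a'=a,$ so $a=F$ is finite. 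The crux is exactly this interplay: the down-set form both certifies membership of the finite $F$ and, via the minimality extracted from the up-set form, pins $a$ down to $F.$ As $i$ was arbitrary, all the $a_i$ are finite and $x\in L,$ completing the equivalence.
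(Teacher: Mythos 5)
Your proposal is correct, and on the key direction it takes a genuinely different route from the paper. Both proofs handle (iii)$\implies$(ii) the same way (distributive normal forms, plus the identifications $\bigwedge_{n\in F}e_n={\uparrow}(F)$ and $\bigvee_{n\in F}e_n=\neg{\downarrow}(\omega-F)$), and both invoke Lemma~\ref{L.isolated} for (i)$\iff$(ii). But for the remaining direction the paper proves (i)$\wedge$(ii)$\implies$(iii), arguing topologically: writing $x={\uparrow}(s_0)\vee\dots\vee{\uparrow}(s_{n-1})$ with the $s_i$ an antichain, if some $s_i$ were infinite then ${\uparrow}(s_i)$ is a limit of the up-sets ${\uparrow}(s)$ over finite $s\subseteq s_i,$ making $x$ a limit of points of $U_\subseteq(A)$ distinct from $x$ and contradicting isolation. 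You instead prove the stronger, purely order-theoretic implication (ii)$\implies$(iii), never using (i): the representation as a finite intersection $\bigwedge_j\neg{\downarrow}(b_j)$ turns membership in $x$ into a finite hitting condition (meet every $d_j=\neg b_j$), so from each minimal element $a$ of $x$ you extract a finite $F\subseteq a$ still in $x,$ and the antichain property of the minimal elements forces $a=F$ to be finite. Your argument buys a cleaner logical structure (each of (ii), (iii) is shown equivalent to the other without a detour through (i)) and is more elementary, avoiding any compactness or limit-point reasoning beyond what Lemma~\ref{L.isolated} already supplies. What the paper's route buys is reusability of the topological pattern: essentially the same limit-point argument is recycled in \S\ref{S.*P_Pi} (Lemma~\ref{L.prod_A_i}, implication (i)$\implies$(ii)) for products of arbitrary finite posets, where "finitely many constrained coordinates" replaces "finite generating set" and your hitting-set bookkeeping would need to be redone in that more general setting.
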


\begin{proof}
By Lemma~\ref{L.isolated}, (i)$\iff$(ii), so it will suffice
to show that (iii)$\implies$(ii), and that
(i)$\wedge$(ii)$\implies$(iii).

Assume~(iii).
If $x=\emptyset,$ it is both the union of the empty
family of principal up-sets and the intersection of
a $\!1\!$-element family of complements of principal
down-sets, namely $\{\neg{\downarrow}(A)\},$ so it satisfies~(ii).
The case $x=A$ is seen similarly.

If $x$ is neither $\emptyset$ nor $A,$ it can be
written as a lattice-theoretic expression in the $e_n,$
and using distributivity, we can express it both as a
finite join of finite meets of these elements, and as
a finite meet of finite joins thereof.
Using the former expression, we note that each finite meet
$e_{n_0} \wedge\dots\wedge e_{n_{i-1}}$ is
the principal up-set determined by $\{n_0,\dots,n_{i-1}\},$
so we have the first condition of~(ii).

On the other hand, when we express $x$ as a finite meet
of finite joins of the $e_n,$ each of those finite joins
can be looked at as the complement of a finite meet
of complements of the $e_n;$ and we see that such a meet
$\neg e_{n_0}\wedge\dots\wedge\neg e_{n_{i-1}}$ is
a principal down-set, the set of elements of $A$ that
are $\leq\neg\{n_0,\dots,n_{i-1}\}.$
So $x$ is a finite meet of complements
of principal down-sets, giving the second condition of~(ii).

Conversely, assume (i)$\wedge$(ii).
By~(ii), $x$ can be written as a finite join
of principal up-sets,
$x={\uparrow}(s_0)\vee\dots\vee{\uparrow}(s_{n-1})$
$(s_0,\dots,s_{n-1}\subseteq\omega).$
Without loss of generality we may assume that none of the $s_i$
contains any of the others.
Suppose one of them, $s_i,$ were infinite.
Then ${\uparrow}(s_i)$ is the intersection of the
downward-directed set of up-sets ${\uparrow}(s)$ as $s$ ranges over the
finite subsets of $s_i,$ and we see that it will be the
limit of those up-sets under the topology on $U_\subseteq(A).$
Now holding the other $s_j$ in our expression for $x$ fixed, and
letting $s\to s_i$ as above, we get a family of up-sets approaching $x.$
Moreover, if one of these up-sets by which we are
approaching $x$ coincided with $x,$ say the one
constructed from a finite subset $s\subseteq s_i,$
then $x$ would have $s$ as a member, which it does not,
since none of the other $\!s_j\!$'s is contained in $s_i.$
Thus $x$ is a limit of points distinct from $x,$ contradicting~(i).
So all $s_i$ are finite, hence each ${\uparrow}(s_i)$ is a
finite (possibly empty) meet of the $e_n,$ so $x$ is a finite
(possibly empty) join of such finite meets, proving~(iii).
\end{proof}

We can now prove:

\begin{theorem}\label{T.2^*w}
Let $A=2^\omega;$ for each natural number $n$
let $e_n\subseteq A$ be the set of subsets of $\omega$ containing $n,$
and let $W\subseteq 2^A$ be the closure of
$\{e_n\mid n\in\omega\}\cup\{\emptyset,A\}$ under pairwise unions and
intersections.
Then $(A,W)$ is a Pratt comonoid.
\end{theorem}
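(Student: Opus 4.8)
The plan is to first reinterpret $W$ topologically and then exploit the interaction between the natural topologies on $2^\omega$ and on $2^{2^\omega}$. Each $e_n=\{s\in 2^\omega\mid n\in s\}$ is a basic clopen subset of $A=2^\omega$, and clopen up-sets are closed under pairwise unions and intersections; hence every member of $W$ is a clopen up-set of $A$. Conversely, a clopen subset of $2^\omega$ depends on only finitely many coordinates, so a clopen up-set is a finite union of principal up-sets ${\uparrow}(F)$ with $F\subseteq\omega$ finite, i.e.\ a finite join of finite meets of the $e_n$; thus $W$ is \emph{exactly} the set of clopen up-sets of $A$. In the language of Corollary~\ref{C.isolated}, $W$ is the set of isolated points of $U_\subseteq(A)$, so $W$ carries the \emph{discrete} subspace topology. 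Condition~(i) of the definition is immediate, and it remains to verify condition~(ii): given a crossword $C$ over $W$, its diagonal $z=\{a\in A\mid(a,a)\in C\}$ must itself be a clopen up-set.

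The crux is to show that $C$ has only finitely many distinct rows. Consider the ``row map'' $\rho\colon A\to 2^A$ sending $a$ to its row $r_a=\{p\in A\mid(a,p)\in C\}$. For each fixed $p\in A,$ the preimage under $\rho$ of the subbasic set $\{y\subseteq A\mid p\in y\}$ is $\{a\mid(a,p)\in C\},$ which is precisely the $\!p\!$-th column of $C;$ since this column lies in $W$ it is clopen, and likewise its complement. Hence $\rho$ is continuous into $2^A$ with its natural topology---this is the promised link between the two topologies. But $\rho$ takes all its values in $W,$ which is discrete, and $A=2^\omega$ is compact; therefore the image $\rho(A)$ is a compact subset of a discrete space, hence finite. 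So there are only finitely many distinct rows $r_1,\dots,r_k\in W.$ (One could equally invoke Lemma~\ref{L.2^*a} afterward, but only finiteness of the rows is needed below.) I expect this continuity-plus-compactness step to be the main obstacle, since it is the one place where the specific topological structure of $2^\omega,$ rather than pure lattice theory, does the work.

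It then remains to assemble the pieces. Because $\rho$ is continuous into the discrete space $W,$ each block $R_i=\rho^{-1}(\{r_i\})=\{a\mid r_a=r_i\}$ is clopen, and the finitely many $R_i$ partition $A.$ For $a\in R_i$ we have $a\in z\iff a\in r_a=r_i,$ so $z\cap R_i=r_i\cap R_i,$ a clopen set; taking the union over the finitely many $i$ shows that $z$ is clopen. Finally, $z$ is an up-set: since $W\subseteq U_\subseteq(A)$ and the up-sets of any poset form a Pratt comonoid, the diagonal of the crossword $C,$ viewed as a crossword over $U_\subseteq(A),$ again lies in $U_\subseteq(A).$ Thus $z$ is a clopen up-set, i.e.\ $z\in W,$ completing the verification of~(ii).
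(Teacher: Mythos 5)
Your proof is correct, and its engine is the same one that drives the paper's proof: both arguments rest on the observation that if all columns (resp.\ rows) of $C$ lie in $W$ then the map sending each index to its row (resp.\ column) is continuous from the compact space $A=2^\omega$ into $2^A,$ together with Corollary~\ref{C.isolated}, which makes the points of $W$ isolated in $U_\subseteq(A).$ Where the paper derives a contradiction from a limit point of infinitely many distinct columns, you corestrict the row map to the discrete subspace $W$ and invoke ``compact plus discrete implies finite''---the same idea in a cleaner package (and transposed, which is immaterial). The endgame, however, is genuinely different. The paper stays lattice-theoretic: from finitely many distinct columns it gets finitely many distinct rows by Lemma~\ref{L.2^*a}, writes the diagonal as a now-finite union of finite intersections of rows and columns by Lemma~\ref{L.diag_is_veewedge}, and concludes from closure of $W$ under pairwise joins and meets; this packaging is exactly what is recorded as Corollary~\ref{C.fin_many} and reused in \S\ref{S.answer}, where there is no ambient topology identifying $W$ with a family of clopen sets. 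You instead partition $A$ into the clopen level sets $R_i$ of the row map, deduce that the diagonal $z$ is clopen from $z\cap R_i=r_i\cap R_i,$ obtain the up-set property of $z$ from the fact that $U_\subseteq(A)$ is itself a Pratt comonoid structure, and then need one ingredient the paper never uses: the identification of $W$ with precisely the clopen up-sets of $2^\omega.$ That identification is correct, but it rests on the standard fact that a clopen subset of $2^\omega$ depends on only finitely many coordinates, plus the reduction of a clopen up-set to a finite union of principal up-sets determined by finite sets; you state this step rather tersely, and it deserves the compactness argument spelled out. On balance, your route is self-contained and yields a pleasant intrinsic description of $W,$ while the paper's route sacrifices that description for a purely combinatorial closing step that transfers verbatim to the non-topological setting of Theorem~\ref{T.T1_nondiscrete}.
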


\begin{proof}
We first note that every $x\in W,$ regarded as a
function $A\to 2,$ is continuous with respect to the
product topology on $A=2^\omega$ and the discrete topology on $2.$
Indeed, the generators $e_n$ and their
complements are the characteristic functions of
the open-closed generating sets for the topology on $A,$
hence are continuous, as are the constant functions $\emptyset$ and $A;$
and the general member of $W$ is obtained from the $e_n,$
$\emptyset,$ and $A$ using the
operations $\wedge$ and $\vee$ on $2,$ which are necessarily
continuous in the discrete topology on $2.$

Thus, if $C:A\times A\to 2$ is any crossword whose {\em rows}
are given by elements of $W,$ then each row represents a
continuous map $A\to 2.$
Hence the whole crossword, thought of as a map taking each element
of $A$ to the column it indexes, is a continuous function $A\to 2^A.$

Suppose now that $C$ as above has infinitely many distinct columns.
Let $B$ be an infinite subset of $A$ whose
members index distinct columns of $C.$
Since $A$ is compact, we can find some limit-point
$b\in A$ of $B.$
By continuity, the column of $C$ indexed by $b$ is
a limit-point of columns indexed by the elements of $B.$
Hence, since elements of $W$ are isolated points, the
$\!b\!$-th column of $C$ cannot be a member of $W.$

It follows that if all rows {\em and} columns of $C$ belong
to $W,$ then $C$ can have only finitely many distinct columns,
and thus by Lemma~\ref{L.2^*a}, also only finitely many distinct rows.
Hence we can apply Lemma~\ref{L.diag_is_veewedge} (with its
qualifier ``{\em possibly infinite}\/'' irrelevant because of the above
finiteness results), and the fact that $W$ is closed under
pairwise unions and intersections, to conclude that the diagonal of $C$
is a member of $W;$ proving that $(A,W)$ is a Pratt comonoid.
\end{proof}

We will have further use for the technique of the last sentence
of the above proof; so let us record what that argument gives us.

\begin{corollary}[to Lemmas~\ref{L.vee_wedge} and~\ref{L.diag_is_veewedge}]\label{C.fin_many}
Suppose $A$ is a set and $W$ a set of subsets of $A$ which
is closed under pairwise unions and intersections,
and contains $\emptyset$ and $A.$
Suppose, moreover, that no crossword $C: A\times A\to 2$ with
all rows and columns in $W$ has infinitely many distinct rows
\textup{(}equivalently, by Lemma~\ref{L.2^*a},
has infinitely many distinct columns\textup{)}.
Then $(A,W)$ is a Pratt comonoid.\qed
\end{corollary}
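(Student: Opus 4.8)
The plan is to verify the two defining conditions of a Pratt comonoid from Definition~\ref{D.chu_cm} directly, exactly as in the final sentence of the proof of Theorem~\ref{T.2^*w}. Condition~(i) is immediate, since $\emptyset$ and $A$ are assumed to lie in $W$. Thus the whole content lies in condition~(ii): I must show that whenever $C$ is a crossword all of whose rows and columns belong to $W$, its diagonal belongs to $W$ as well.

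So I would fix such a crossword $C:A\times A\to 2$ and write $z$ for its diagonal. The hypothesis of the corollary says $C$ has only finitely many distinct rows, and hence, by Lemma~\ref{L.2^*a}, only finitely many distinct columns. I would then invoke Lemma~\ref{L.diag_is_veewedge}, which expresses $z$ as a join $\bigvee_{a\in z} x_a$, where $x_a$ is the intersection of all rows and columns of $C$ that contain $a$.

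The one point needing care---and the main obstacle---is to see that the ``possibly infinite'' unions and intersections of Lemma~\ref{L.diag_is_veewedge} are in fact \emph{finite} here, after which closure of $W$ under pairwise (hence finite) meets and joins finishes everything. Each $x_a$ is an intersection over the \emph{distinct} rows and columns containing $a$, of which there are finitely many, so $x_a$ is a finite meet of members of $W$ and therefore lies in $W$. Moreover $x_a$ is completely determined by \emph{which} of the finitely many distinct rows and columns contain $a$, so the assignment $a\mapsto x_a$ has finite image; hence $z=\bigvee_{a\in z} x_a$ is a finite join of members of $W$, and so lies in $W$. This establishes condition~(ii) and completes the proof; everything beyond the finiteness bookkeeping is a direct appeal to the closure properties already assumed of $W$.
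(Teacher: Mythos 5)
Your proof is correct and follows exactly the paper's intended argument: the corollary is proved in the last sentence of the proof of Theorem~\ref{T.2^*w}, namely by combining Lemma~\ref{L.2^*a}, Lemma~\ref{L.diag_is_veewedge}, and closure of $W$ under pairwise meets and joins. Your explicit bookkeeping---that each $x_a$ is a meet of finitely many distinct words and that $a\mapsto x_a$ has finite image, so the join $\bigvee_{a\in z}x_a$ is finite---is precisely the detail the paper compresses into the phrase that the qualifier ``possibly infinite'' is ``irrelevant because of the above finiteness results.''
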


\section{A non-discrete \texorpdfstring{$\!T_1\!$}{T1} Pratt comonoid}\label{S.answer}

We are now ready to construct a Pratt comonoid $(A,W)$
that answers the question~\eqref{d.VPq} in the negative.

Intuitively, the idea will be to make our base-set $A$ the
union of many ``islands'', and take
$W$ to be generated by an uncountable family of
subsets $w_{n,\gamma}$ of $A,$ such that each generator, when
restricted to certain of the islands,
looks like one of the generators in the example of
Theorem~\ref{T.2^*w} above, while everywhere else, it
looks like one of a countable family of generators $u_n$ of a discrete
Pratt comonoid structure on $A.$

The fact that the $w_{n,\gamma}$ look ``in most places'' like the $u_n$
will make our structure $\!T_1.\!$
On the other hand, the system of ``islands'' will be
set up so that given any {\em countable} family of the $w_{n,\gamma},$
there is some island on which that
family acts precisely like our generating set for the
construction of Theorem~\ref{T.2^*w}.
We will use this property to show that, as in that theorem, no
crossword formed from lattice expressions in our generators
can have infinitely many distinct rows or columns; whence those
lattice expressions will in fact form a Pratt comonoid structure on $A,$
which we shall see is $\!T_1\!$ but not discrete.

We begin with an easy general observation.

\begin{lemma}\label{L.ctb_T1}
If $A$ is a set of continuum cardinality, then there is a countable
$\!T_1\!$ family $\{u_0,u_1,\dots\}$ of subsets of $A.$
\end{lemma}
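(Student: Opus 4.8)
The plan is to use the hypothesis $\card(A)=2^{\aleph_0}$ to identify $A$ with $2^\omega$, and then to take for our family the subbasic open sets of the natural topology on $2^\omega$. Concretely, I would first fix a bijection between $A$ and $2^\omega$, and accordingly regard each element $a\in A$ as a subset of $\omega$ (equivalently, as a $\{0,1\}$-valued function on $\omega$). For each $n\in\omega$ I then set $u_{2n}=\{a\in A\mid n\in a\}$ and $u_{2n+1}=\{a\in A\mid n\notin a\}$. This is plainly a countable family of subsets of $A$, indexed by $\omega$.

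To verify the $\!T_1\!$ condition of Definition~\ref{D.chu_cm}(iii), I would take distinct $a,b\in A$. Under our identification these are distinct subsets of $\omega$, so some $n\in\omega$ lies in exactly one of them. If $n\in a-b$, then $u_{2n}$ contains $a$ but not $b$; if instead $n\in b-a$, then $u_{2n+1}=\neg u_{2n}$ contains $a$ but not $b$. Either way we have produced a member of our family containing $a$ but not $b$, as required.

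There is essentially no obstacle here: once the bijection with $2^\omega$ is in hand, the statement reduces to the elementary fact that, at each coordinate, the two complementary half-spaces jointly separate the points of $2^\omega$. The one subtlety worth flagging is that one must include the complements $u_{2n+1}$ alongside the $u_{2n}$: the sets $\{u_{2n}\mid n\in\omega\}$ by themselves separate a point having a $1$ in coordinate $n$ from a point having a $0$ there, but not conversely, whereas Definition~\ref{D.chu_cm}(iii) calls for a separating word for each \emph{ordered} pair of distinct elements.
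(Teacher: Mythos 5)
Your proposal is correct and is essentially identical to the paper's own proof: both transfer the problem to $A=2^\omega$ via a bijection and take $u_{2n}=e_n$ (the sets containing $n$) and $u_{2n+1}=\neg e_n$, with the $\!T_1\!$ verification being exactly the coordinate-separation argument you spell out. The paper simply states this more tersely, declaring the $\!T_1\!$ property ``immediate.''
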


\begin{proof}
It suffices to construct such a family for $A=2^\omega.$
To do this, let us define each even-indexed
set $u_{2n}$ to be the set $e_n$ of subsets of $\omega$ which
contain $n,$ and each odd-indexed set $u_{2n+1}$ to be
the set $\neg e_n$ of subsets of $\omega$ which do not contain $n.$
The $\!T_1\!$ property is immediate.
\end{proof}

Recalling that $\omega_1$ denotes the first uncountable ordinal,
we now define the base-set $A$ of our example:
\begin{equation}\begin{minipage}[c]{35pc}\label{d.ceg_A}
$A\ =\ A'\times A'',$ where

\hspace*{1em}$A'\ =$ the set of all one-to-one maps
$\omega\to\omega\times\omega_1,$

\hspace*{1em}$A''\ =\ 2^\omega.$
\end{minipage}\end{equation}

The typical element of $A$ will be written $(a',a''),$
with $a'\in A',$ $a''\in A''.$
The ``islands'' referred to in the above sketch
will be the sets $\{a'\}\times A''.$

Observe that $A$ has the cardinality of the continuum.
Indeed, since $A''$ has that cardinality, it suffices to show
that $A'$ has at most that cardinality.
Now $A'$ is contained in the set of {\em all} maps
$\omega\to\omega\times\omega_1,$ and the cardinality of that
set is bounded above
by the result of replacing $\omega\times\omega_1$ in that
description by $2^\omega,$ i.e., by the cardinality of
the set of maps $\omega\to 2^\omega,$ which
is $(2^{\aleph_0})^{\aleph_0}=
2^{\aleph_0\cdot\aleph_0}=2^{\aleph_0},$ as required.

Hence by Lemma~\ref{L.ctb_T1}, we can
\begin{equation}\begin{minipage}[c]{35pc}\label{d.u_n}
let $\{u_n\mid n\in\omega\}$ be a countable $\!T_1\!$ family of subsets
of $A.$
\end{minipage}\end{equation}

We now define the family of subsets of $A$ (which we will
express as $\!\{0,1\}\!$-valued functions) that we will take as the
generators of our Pratt comonoid.
Namely, for each $(n,\gamma)\in\omega\times\omega_1,$
we let $w_{n,\gamma}:A\to 2$ be defined by
\begin{equation}\begin{minipage}[c]{35pc}\label{d.wn*m}
$w_{n,\gamma}(a',a'')\ = \left\{ \begin{array}{ll}
u_n(a',a'') & \mbox{if $(n,\gamma)$ is not among the
elements $a'(i)$ for $i\in\omega$ (cf.~\eqref{d.ceg_A}),}\\[.2em]
a''(i) & \mbox{if $i\in\omega$ satisfies $a'(i)=(n,\gamma).$}
\end{array}\right.$
\end{minipage}\end{equation}

Thus, for each ``island'' $\{a'\}\times A'',$
the coordinate $a'$ specifies a countable
sequence of ordered pairs $(n,\gamma)$ such that the corresponding
generators, $w_{n,\gamma},$ act on the $\!A''\!$-components
of members of that island
like the generating functions $e_i$ of the example of
Theorem~\ref{T.2^*w}; namely, if $(n,\gamma)=a'(i),$
then $w_{n,\gamma}$ selects the $\!i\!$-th coordinate of $a''.$

Now let
\begin{equation}\begin{minipage}[c]{29pc}\label{d.W_frm_wn*m}
\hspace*{-2.48pc}$W=$ the lattice of subsets
of $A$ generated by
$\{w_{n,\gamma}\mid n\in\omega,\,\gamma\in\omega_1\}\cup\{\emptyset,A\}$
under pairwise unions and intersections.
\end{minipage}\end{equation}

Most of our work will go into showing that $(A,W)$ is a Pratt comonoid.

Turning back to the definition~\eqref{d.ceg_A} of $A,$
observe that each $a'\in A'$ has countable image; hence the set
of second coordinates of elements of that image will be
bounded within $\omega_1.$
Hence defining, for each $\beta<\omega_1,$
\begin{equation}\begin{minipage}[c]{35pc}\label{d.A*l}
$A_\beta\ =\ \{(a',a'')\in A\mid$ the second coordinates of all
elements of the image of $a'$ are $<\beta\},$
\end{minipage}\end{equation}
we have
\begin{equation}\begin{minipage}[c]{35pc}\label{d.A=bigcup}
$A$ is the union of the chain of subsets
$A_\beta$ $(\beta\in\omega_1),$
\end{minipage}\end{equation}
and we see from the first line of~\eqref{d.wn*m} that
\begin{equation}\begin{minipage}[c]{35pc}\label{d.like_un}
for $\beta\in\omega_1,$ every $w_{n,\gamma}$ with $\gamma\geq\beta$
acts on $A_\beta$ by $u_n.$
\end{minipage}\end{equation}

We can now deduce that
\begin{equation}\begin{minipage}[c]{35pc}\label{d.wn*mT1}
$W$ is $\!T_1\!$ on $A.$
\end{minipage}\end{equation}
Namely, given $a_1,\ a_2\in A,$ the $\!T_1\!$ property
of the $u_n$ lets us choose an $n$ such that $u_n(a_1)=1,$ $u_n(a_2)=0,$
and~\eqref{d.A=bigcup} allows us to choose a $\beta$ such
that $a_1,\,a_2\in A_\beta;$
so by~\eqref{d.like_un}, $w_{n,\beta}(a_1)=u_n(a_1)=1,$
$w_{n,\beta}(a_2)=u_n(a_2)=0,$ as required.

Let us show next that
\begin{equation}\begin{minipage}[c]{35pc}\label{d.ctbl_on_A*l}
for every $\beta\in\omega_1,$ the restrictions to
$A_\beta\subseteq A$ of the
elements of $W\subseteq 2^A$ are countable in number.
\end{minipage}\end{equation}
By~\eqref{d.W_frm_wn*m} it suffices to show
that the restrictions to $A_\beta$ of the
generating elements $w_{n,\gamma}$ are countable in number.
By~\eqref{d.like_un}, those $w_{n,\gamma}$ with $\gamma\geq\beta$
have restrictions given by the countably many elements $u_n.$
By definition of $\omega_1,$
there are only countably many $\gamma<\beta,$ and hence only countably
many $w_{n,\gamma}$ with such $\gamma,$ completing the proof
of~\eqref{d.ctbl_on_A*l}.

We need, next, a combinatorial lemma (which we will apply
to occurrences of the $w_{n,\gamma}$ appearing as arguments in a
$\!j\!$-variable lattice term).

\begin{lemma}\label{L.all_or_1}
Let $X$ be a set, $j$ a positive integer, and $S$ an infinite
set of ordered $\!j\!$-tuples of elements of $X,$ such
that in each member of $S,$ the $j$ entries are distinct.

Then there exist an infinite subset $S'\subseteq S$ and an
$i<j$ such that, after we apply some permutation of the
$j$ coordinates to all our $\!j\!$-tuples, all members of $S'$
begin with the same initial $\!i\!$-element string, while
every element of $X$ that occurs in one of the last $j-i$ positions of
an element of $S'$ occurs in no other member of $S'.$

Thus, cutting $S'$ down to a countable set if it was uncountable,
and writing $k=j-i>0,$ we can find distinct
elements $x_\ell\in X$ $(\ell\in\omega)$ such that $S'$
consists of the $\!j\!$-tuples
\begin{equation}\begin{minipage}[c]{35pc}\label{d.j-tuples}
$(x_0,\dots,x_{i-1},\,x_{i+hk},\dots,x_{i+(h+1)k-1})$\quad
for $h\in\omega.$
\end{minipage}\end{equation}
\end{lemma}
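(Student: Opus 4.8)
The plan is to recognize this as an ordered refinement of the classical infinite \emph{sunflower} (or \emph{$\Delta$-system}) lemma. First I would pass to the underlying \emph{unordered} $j$-element sets: since a fixed $j$-element subset of $X$ underlies at most $j!$ of the tuples in $S$ (the tuples having distinct entries), infinitely many distinct $j$-element sets occur as underlying sets of members of $S$. Choosing one tuple per underlying set yields an infinite subfamily of $S$ whose underlying sets are pairwise distinct, and it is on these sets that I will run the sunflower argument.

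The central tool is the infinite $\Delta$-system lemma: any infinite family of sets of a fixed finite size $j$ contains an infinite subfamily forming a \emph{sunflower}, i.e.\ one for which all pairwise intersections equal a common core $K$ while the complementary ``petals'' are pairwise disjoint. This has a quick proof by induction on $j$: if some point of $X$ lies in infinitely many of the sets, restrict to those, delete the point, apply the inductive hypothesis, and restore the point to the core; otherwise every point lies in only finitely many of the sets, so a finite union meets only finitely many of them and one may greedily extract an infinite pairwise-disjoint subfamily, giving empty core. Applying this to our infinite family of distinct underlying $j$-element sets produces an infinite subfamily with core $K$ of some size $c$ and pairwise-disjoint petals of size $j-c$, and I keep the corresponding tuples. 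Here $c<j$, i.e.\ $k:=j-c>0$, because distinct sets cannot all coincide with their common core; this is exactly what will force $i<j$.

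With the sunflower fixed, I would reinstate the ordering and perform finitely many pigeonhole refinements, each preserving infiniteness. First, among the finitely many subsets of $\{0,\dots,j-1\}$ recording which coordinate positions carry core elements, I pass to an infinite subfamily realizing a single such pattern. Second, among the at most $c!$ bijections assigning the elements of $K$ to those core positions, I pass to an infinite subfamily realizing a single assignment; on this subfamily the core positions carry fixed \emph{constant} values. Applying the one permutation of coordinates that moves the core positions to the front then makes every member begin with the same length-$c$ string, while the remaining $k=j-c$ positions carry petal elements. By the sunflower property the petals are pairwise disjoint and disjoint from the core, so every element appearing in one of these last $k$ positions appears in no other member of the subfamily; this is precisely the asserted dichotomy, with $i=c$ and $k=j-i>0$.

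Finally, to reach the explicit form~\eqref{d.j-tuples}, I would cut the subfamily down to a countable infinite set, enumerate its members by $h\in\omega$, take $x_0,\dots,x_{i-1}$ to be the constant initial string, and take $x_{i+hk},\dots,x_{i+(h+1)k-1}$ to be the last $k$ entries of the $h$-th member. Distinctness of all the $x_\ell$ is then immediate: the core entries are distinct among themselves, the entries within any single tuple are distinct, and the petals (the last-$k$ blocks) are pairwise disjoint and disjoint from the core. I expect the only genuine content to be the sunflower lemma together with the observation that $k>0$; the ordered refinements and the concluding enumeration are routine finite bookkeeping.
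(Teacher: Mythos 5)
Your argument is correct, but it follows a genuinely different route from the paper's. The paper works directly with the ordered tuples: it takes the largest $i$ such that infinitely many members of $S$ agree in some common $\!i\!$-tuple of coordinate values, permutes those coordinates to the front to form an infinite subfamily $S_0,$ and then uses the maximality of $i$ (which forces every element of $X$ other than the $i$ common values to occur in only finitely many members of $S_0)$ to extract $S'$ greedily, at each step discarding the finitely many members that meet the fresh entries just used. You instead forget the ordering, invoke (and in effect reprove) the infinite $\Delta\!$-system lemma on the underlying $\!j\!$-element sets, and then recover the ordered statement by two pigeonhole refinements --- first the pattern of positions carrying core elements, then the bijection assigning core elements to those positions --- followed by a single coordinate permutation. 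All the delicate points are handled correctly in your write-up: that distinct sets force core size $c<j$ (so $k>0$), that a petal element of one member lies in no other member (else it would lie in the core), and that the $x_\ell$ in the final enumeration are pairwise distinct. What your route buys is modularity: the combinatorial content is isolated in a classical, citable lemma, and the remaining work is visibly routine bookkeeping. What the paper's route buys is economy: working with ordered tuples from the outset lets the maximality argument treat positions and values simultaneously, with no passage to unordered sets and no pigeonhole steps; it also adapts immediately, as the paper remarks, to produce $S'$ of any regular cardinality $\kappa\leq\card(S)$ by replacing ``finite/infinite'' by ``of cardinality $<\kappa$/$\geq\kappa$'' throughout, a generalization your sunflower reduction would need the corresponding $\!\kappa\!$-version of the $\Delta\!$-system lemma to match.
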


\begin{proof}
Let $i<k$ be the largest integer such that $S$ contains infinitely
many elements which agree in some
common $\!i\!$-tuple of their coordinates;
let us perform a permutation of indices that makes $0,\dots,i{-}1$
such a set of coordinates, and let us choose $x_0,\dots,x_{i-1}$
which appear in that
order as the first $i$ coordinates of infinitely many
elements of $S.$
Let $S_0\subseteq S$ be the infinite set of those elements of $S$
beginning with the string $x_0,\dots,x_{i-1}.$
Note that the maximality of $i$ implies that no element of $X$
other than $x_0,\dots,x_{i-1}$ occurs in infinitely many
members of $S_0.$

Now let $k=j-i,$ and choose any $x_i,\dots,x_{i+k-1}$ such that
$(x_0,\dots,x_{i-1},\,x_i,\dots,x_{i+k-1})\in S_0.$
As just noted, each of $x_i,\dots,x_{i+k-1}$ appears
as an entry in only finitely many members of $S_0,$ so
if we let $S_1$ be the set of elements of $S_0$ in which
none of them appear, this will still be infinite, and we can
pick $x_{i+k},\dots,x_{i+2k-1}$ such that
$(x_0,\dots,x_{i-1},\,x_{i+k},\dots,x_{i+2k-1})\in S_1.$
Letting $S_2$ be the infinite set of elements of $S_1$
involving none of $x_{i+k},\dots,x_{i+2k-1},$
we can similarly pick $x_{i+2k},\dots,x_{i+3k-1}$
to get an element of $S_2;$ and so forth.
Thus we get an infinite family $S'$ of the desired form.

(We shall only need a countably infinite $S';$ but if we wished,
we could get $S'$ to have any regular cardinality $\kappa\leq\card(S),$
by replacing references to finite and infinite subsets of $S$
in the above proof with subsets of cardinalities $<\kappa$ and
$\geq\kappa.)$
\end{proof}

Using this lemma, let us prove:

\begin{lemma}\label{L.unctbl}
For $A$ and $W$ defined as in \eqref{d.ceg_A}-\eqref{d.W_frm_wn*m},
every uncountable subset $W_0\subseteq W$ contains a
countable subset $W_1$ such that
the set of distinct functions $W_1\to 2$ obtained by evaluation
at different elements of $A$ has continuum cardinality.
\end{lemma}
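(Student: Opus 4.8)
The plan is to locate, inside any uncountable $W_0$, a countable subfamily $W_1$ together with a single ``island'' $\{a'\}\times A''$ on which the members of $W_1$ behave like the generators $e_0,e_1,\dots$ of Theorem~\ref{T.2^*w}; the freedom to choose the coordinates of a point $a''\in 2^\omega$ will then produce continuum-many evaluation functions. First I would record that evaluation at a point $a\in A$ is a lattice homomorphism $W\to 2$, so that each $w\in W$ can be written as $w=t(g_1,\dots,g_j)$, where $g_1,\dots,g_j$ are distinct generators $w_{n,\gamma}$ and $t:2^j\to 2$ is the monotone Boolean operation induced by a lattice term; discarding the finitely many arguments on which $t$ does not depend, I may assume $t$ depends on all $j$ of its variables. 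Since for each $j$ there are only finitely many such $t$, there are only countably many pairs $(j,t)$ (the two constant elements $\emptyset,A$ being thrown away, as $W_0$ is uncountable). Hence some uncountable $W_0'\subseteq W_0$ consists of elements sharing one fixed $(j,t)$. As $t$ is fixed, distinct elements of $W_0'$ must use distinct generator-tuples, so $W_0'$ yields an uncountable set $S$ of $\!j\!$-tuples of distinct elements of $X=\omega\times\omega_1$.

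Next I would feed $S$ into Lemma~\ref{L.all_or_1}. After the coordinate-permutation supplied by that lemma (applied simultaneously to the arguments of the fixed $t$, which leaves it depending on all its variables), I obtain a countable $W_1=\{w^{(h)}\mid h\in\omega\}\subseteq W_0$ and distinct elements $x_\ell\in X$ $(\ell\in\omega)$ with a shared prefix $x_0,\dots,x_{i-1}$ and, for each $h$, a block of fresh indices $x_{i+hk},\dots,x_{i+(h+1)k-1}$ $(k=j-i>0)$, such that $w^{(h)}=t(w_{x_0},\dots,w_{x_{i-1}},\,w_{x_{i+hk}},\dots,w_{x_{i+(h+1)k-1}})$. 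Because the $x_\ell$ are distinct and their indices exhaust $\omega$, the rule $a'(\ell)=x_\ell$ defines a one-to-one map $\omega\to X$, i.e.\ an element $a'\in A'$. By~\eqref{d.wn*m}, on the island $\{a'\}\times A''$ each generator $w_{x_\ell}$ acts on $a''\in 2^\omega$ by reading off its $\!\ell\!$-th coordinate, $w_{x_\ell}(a',a'')=a''(\ell)$; in particular the fresh generators belonging to distinct blocks read pairwise disjoint coordinates of $a''$, while the shared generators read the same coordinates $0,\dots,i-1$ for every $h$.

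Finally I would exploit that $t$ depends on its last argument, which lies in a fresh block since $k>0$: by monotonicity there are bits $c_0,\dots,c_{j-2}$ with $t(c_0,\dots,c_{j-2},0)=0$ and $t(c_0,\dots,c_{j-2},1)=1$. Given an arbitrary $b=(b_h)\in 2^\omega$, I define $a''\in 2^\omega$ by placing $c_0,\dots,c_{i-1}$ in the shared coordinates $0,\dots,i-1$, placing $c_i,\dots,c_{j-2}$ in the first $k-1$ coordinates of each fresh block, and placing $b_h$ in the last coordinate of block $h$. Then $w^{(h)}(a',a'')=t(c_0,\dots,c_{j-2},b_h)=b_h$, so the evaluation function $w^{(h)}\mapsto w^{(h)}(a',a'')$ is exactly $h\mapsto b_h$. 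As $b$ ranges over $2^\omega$ these functions are pairwise distinct, giving continuum-many evaluation functions $W_1\to 2$ (and at most that many, since $W_1$ is countable), as required. The main obstacle is the bookkeeping of the middle and final paragraphs: one must arrange, via Lemma~\ref{L.all_or_1}, that the generators feeding the \emph{varying} argument-slots of $t$ across different $w^{(h)}$ read off disjoint coordinates of one single $a''$, while the shared slots are read off consistently, and one must track how the lemma's coordinate permutation interacts with the fixed operation $t$. Once that alignment is secured, the single witnessing assignment $(c_0,\dots,c_{j-2},-)$ transfers the whole continuum of bit-sequences into the evaluation image.
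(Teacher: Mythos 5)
Your proposal is correct and follows essentially the same route as the paper's own proof: pigeonhole on the (countably many) lattice terms to fix one term $t$, apply Lemma~\ref{L.all_or_1} to the resulting tuples of generators, encode the output sequence as an element $a'\in A'$ via~\eqref{d.a'=}, and use monotonicity plus dependence on all arguments to realize continuum many evaluation patterns on the island $\{a'\}\times 2^\omega.$ The only (harmless) differences are cosmetic: you fix the first $k-1$ coordinates of each fresh block at constants and vary only the last, where the paper gives each whole block a common value, and you make explicit the bookkeeping of the coordinate permutation that the paper leaves implicit.
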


\begin{proof}
Each member of $W$ can be written as a lattice expression
$b(w_{n_0,\gamma_0},\dots,w_{n_{j-1},\gamma_{j-1}})$
of some finite length $j,$ whose arguments
$w_{n_0,\gamma_0},\dots,w_{n_{j-1},\gamma_{j-1}}$ are distinct, and
which depends nontrivially on all $j$ of its arguments
(i.e., such that inserting all combinations
of $\!0\!$'s and $\!1\!$'s in those $j$ positions in $b,$
the resulting function depends on each of its variables).
Note that, ignoring the choice of variables
$w_{n_0,\gamma_0},\dots,w_{n_{j-1},\gamma_{j-1}},$
there are only countably many distinct finite lattice
terms; hence the uncountability of $W_0$ implies that there is
some lattice term $b,$ say in $j$ variables,
from which one can get infinitely many of the
members of $W_0$ by inserting appropriate elements $w_{n,\gamma}$
as its arguments.
Let us fix such a $b.$

We can now apply Lemma~\ref{L.all_or_1} with $X=\{w_{n,\gamma}\},$
and $S$ the set of $\!j\!$-tuples of elements of $X$
which, when used as the argument-string of $b,$ give an
element of $W_0.$
That lemma gives us a sequence of distinct
elements $w_{n_\ell,\gamma_\ell}$ $(\ell\in\omega)$ such that
\begin{equation}\begin{minipage}[c]{35pc}\label{d.all_h}
for each $h\in\omega,$\quad
$b(w_{n_0,\gamma_0},\dots,w_{n_{i-1},\gamma_{i-1}},
w_{n_{i+hk},\gamma_{i+hk}},\dots,
w_{n_{i+(h+1)k-1},\gamma_{i+(h+1)k-1}})\in W_0.$
\end{minipage}\end{equation}
In particular, the set
\begin{equation}\begin{minipage}[c]{35pc}\label{d.W_1}
$W_1\ =\ \{\,b(w_{n_0,\gamma_0},\dots,w_{n_{i-1},
\gamma_{i-1}},w_{n_{i+hk},\gamma_{i+hk}},\dots,
w_{n_{i+(h+1)k-1},\gamma_{i+(h+1)k-1}})\mid h\in\omega\}$
\end{minipage}\end{equation}
will be a countably infinite subset of $W_0.$
Having so chosen the $n_\ell$ and $\gamma_\ell,$ let us encode them as
an element $a'\in A',$ setting
\begin{equation}\begin{minipage}[c]{35pc}\label{d.a'=}
$a'(\ell)\ =\ (n_\ell,\gamma_\ell)$ for $\ell\in\omega.$
\end{minipage}\end{equation}

To complete the proof of the lemma, we shall construct for this $a'$
a continuum-sized family of points of $\{a'\}\times 2^\omega$
such that restriction
to distinct points of that family induces distinct
$\!\{0,1\}\!$-valued functions on the countable set $W_1.$

To do this, recall that our lattice-expression $b$
depends on all $j=i+k$ of its arguments.
Combining this with the fact that lattice operations
are isotone (order-respecting), we see that for some choice of
values $c_0,\dots,c_{i-1}\in\{0,1\},$ we will have
\begin{equation}\begin{minipage}[c]{35pc}\label{d.b=0,1}
$b(c_0,\dots,c_{i-1},0,\dots,0)=0,$\quad
$b(c_0,\dots,c_{i-1},1,\dots,1)=1.$
\end{minipage}\end{equation}

Let us fix such $c_0,\dots,c_{i-1}\in\{0,1\}.$
Within $\{a'\}\times 2^\omega,$
let us now choose the $\!2^\omega\!$-tuple consisting of all elements
whose $\!A''\!$-coordinates have the values $c_0,\dots,c_{i-1}$
at $0,\dots,i-1,$ then have a common
value, $0$ or $1,$ at $i,\dots,i+k-1,$
likewise a common value, $0$ or $1,$
at $i+k,\dots,i+2k-1,$ and, generally, for each $h\in\omega,$
a common value, $0$ or $1,$ at $i+hk,\dots,i+(h{+}1)k-1.$
We see from~\eqref{d.wn*m} and~\eqref{d.b=0,1}
that on the $\!\omega\!$-tuple of elements
comprising $W_1,$ this family of
points of $A$ will induce all $2^\omega$ possible
$\!\omega\!$-tuples of values in $\{0,1\},$
yielding the assertion of the lemma.
\end{proof}

Contrasting~\eqref{d.ctbl_on_A*l} and Lemma~\ref{L.unctbl}, we get:

\begin{corollary}\label{C.unctbl_C}
Let $A$ and $W$ be defined as in \eqref{d.ceg_A}-\eqref{d.W_frm_wn*m},
and $C$ be any function $A\times A\to\{0,1\}.$

Then if $C$ has uncountably many rows which give distinct values in $W,$
it has at least one column which is not in $W.$
Likewise, if $C$ has uncountably many columns giving distinct values in
$W,$ it has at least one row not in~$W.$
\end{corollary}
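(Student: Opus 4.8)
The plan is to prove the first assertion; the second then follows by interchanging the words ``row'' and ``column'' throughout, since Lemma~\ref{L.unctbl} and~\eqref{d.ctbl_on_A*l} speak only of $W$ as a set of subsets of $A,$ and are indifferent to which coordinate of $C$ we regard as indexing rows. So suppose $C$ has uncountably many rows giving distinct values in $W.$ First I would collect those distinct row-values into a set $W_0\subseteq W,$ which is then uncountable, and apply Lemma~\ref{L.unctbl} to obtain a countable $W_1\subseteq W_0$ such that evaluation at the various points of $A$ yields continuum-many distinct functions $W_1\to 2.$ Since each member of $W_1$ is the value of some row of $C,$ I can choose for each $w\in W_1$ a point $a_w\in A$ with $C(a_w,-)=w;$ as the members of $W_1$ are distinct, so are the countably many chosen indices, and I write $T=\{a_w\mid w\in W_1\}.$

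The next step is the key observation tying the two ingredients together: after identifying $T$ with $W_1$ via $a_w\leftrightarrow w,$ the restriction to $T$ of the $\!b\!$-th column of $C$ is, for each $b\in A,$ exactly the evaluation map $w\mapsto w(b)=C(a_w,b).$ Hence Lemma~\ref{L.unctbl} says that, as $b$ ranges over $A,$ the columns of $C$ restricted to $T$ take continuum-many distinct values. On the other hand, since $W_1$ is countable and each $a_w$ carries a map in $A'$ with countable image, the union of these images is a countable subset of $\omega\times\omega_1$ whose second coordinates are therefore bounded by some $\beta<\omega_1$ (the regularity of $\omega_1);$ by~\eqref{d.A=bigcup} this gives $T\subseteq A_\beta.$ Now~\eqref{d.ctbl_on_A*l} tells us that the restrictions to $A_\beta,$ and hence a fortiori to the subset $T,$ of the elements of $W$ are only countable in number.

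Comparing the two counts finishes the argument: there are continuum-many distinct restrictions-to-$T$ of columns of $C,$ but only countably many possible restrictions-to-$T$ of members of $W.$ Therefore some column of $C$ (indeed all but countably many) restricts to $T$ in a way realized by no element of $W,$ so that column cannot itself lie in $W,$ which is the desired conclusion.

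I expect the step requiring the most care to be the middle one: recognizing the evaluation functions produced by Lemma~\ref{L.unctbl} as restrictions of columns of $C$ to the countable index set $T,$ and confining all of those row-indices to a single $A_\beta$ via the boundedness argument in $\omega_1.$ Once those two identifications are in place, the clash between ``continuum-many'' and ``countably many'' is immediate, and the rest is bookkeeping plus the row/column symmetry.
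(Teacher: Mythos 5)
Your proof is correct and follows essentially the same route as the paper's: both extract an uncountable set $W_0$ of row-values, pass to a countable $W_1$ via Lemma~\ref{L.unctbl}, use the uncountable cofinality (regularity) of $\omega_1$ to confine the corresponding row indices inside a single $A_\beta,$ and then contrast the continuum-many distinct column restrictions with the countably many restrictions of members of $W$ guaranteed by~\eqref{d.ctbl_on_A*l}. The only cosmetic difference is that you restrict columns to the countable index set $T$ rather than to all of $A_\beta,$ which changes nothing in substance.
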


\begin{proof}
If $C$ has rows with uncountably many distinct values in $W,$
let $W_0\subseteq W$ be the set of values of these rows.
Choose $W_1\subseteq W_0$ as in Lemma~\ref{L.unctbl}, and choose
$\beta\in\omega_1$ such that all of the elements of $W_1$ occur
as rows indexed by members of $A_\beta.$
(This can be done because $W_1$ is countable, while
$\omega_1$ has uncountable cofinality.)
Then by choice of $W_1$ and $\beta,$
the restrictions of the columns of $C$
to $A_\beta$ give continuum many distinct functions $A_\beta\to 2.$
But by~\eqref{d.ctbl_on_A*l}, only countably many of these
restrictions can arise from members of $W;$ so not all columns
of $C$ belong to $W.$
The statement with rows and columns interchanged is seen in
the same way.
\end{proof}

Now assume that $C$ is a crossword over $W.$
The above corollary shows that $C$ can
have at most countably many distinct rows
and at most countably many distinct columns.
Hence the rows and columns of $C$
are expressible as lattice-theoretic expressions
in countably many of the $w_{n,\gamma};$
let $w_{n_\ell,\gamma_\ell}$ $(\ell\in\omega)$
be a countably infinite family in terms of which they can
be expressed.
(If only finitely many are needed, choose the rest arbitrarily.)
Let $a'\in A'$ be
defined again as in~\eqref{d.a'=}, but now using this family of pairs.
Then as noted in the paragraph following~\eqref{d.wn*m},
the elements $w_{n_\ell,\gamma_\ell}$ will behave on
$\{a'\}\times 2^\omega\subseteq A$
like the generators $e_\ell$ in Theorem~\ref{T.2^*w}.
Hence restricting $C$ to a crossword
on $\{a'\}\times 2^\omega\subseteq A,$
and regarding $\{a'\}\times 2^\omega$ as a copy of $2^\omega,$
this crossword will, by the proof of that
theorem, have only finitely many distinct rows and columns.

Moreover, rows of the whole crossword $C$ that
represent distinct elements of $W$ must be described by
distinct elements of the free
distributive lattice generated by the $w_{n_\ell,\gamma_\ell},$ hence
their restrictions to their entries indexed
by elements of $\{a'\}\times 2^\omega$
will also be distinct; so if $C$ has infinitely many distinct rows,
it must have infinitely many distinct columns indexed
by members of $\{a'\}\times 2^\omega;$ hence the induced
crossword on that set will have infinitely many distinct columns,
which we have just seen is impossible.
So such a $C$ can have only finitely many distinct rows;
hence by Corollary~\ref{C.fin_many}, $(A,W)$ is a Pratt comonoid.

Since a discrete Pratt comonoid allows all combinations of
rows to appear in a crossword, results such as~\eqref{d.ctbl_on_A*l}
or Corollary~\ref{C.unctbl_C} show that $(A,W)$ is not discrete.
Let us also note that the cardinality of $W$ is the cardinality of a
free distributive lattice with $\aleph_1$ generators, which
is $\aleph_1.$
We have thus proved:

\begin{theorem}\label{T.T1_nondiscrete}
For $A$ and $W$ as described by~\eqref{d.ceg_A}-\eqref{d.W_frm_wn*m},
$(A,W)$ is a non-discrete $\!T_1\!$ Pratt comonoid, with
$\card(A)=2^{\aleph_0}$ and $\card(W)=\aleph_1.$\qed
\end{theorem}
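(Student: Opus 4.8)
The plan is to assemble the ingredients prepared in the preceding discussion, as each clause of the theorem has already been set up. The $\!T_1\!$ assertion is precisely~\eqref{d.wn*mT1}. The equality $\card(A)=2^{\aleph_0}$ is the cardinality count carried out immediately after~\eqref{d.ceg_A}, and $\card(W)=\aleph_1$ follows from the fact that $W$ is (a copy of) the free distributive lattice on the $\aleph_1$-element generating set $\{w_{n,\gamma}\}$ together with $\emptyset,A$, and that a free distributive lattice on $\aleph_1$ generators has cardinality $\aleph_1$ (only countably many finite lattice terms, times $\aleph_1$ choices of arguments). So the real content is to verify that $(A,W)$ is a Pratt comonoid, and then that it is not discrete.

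To establish the Pratt comonoid property I would invoke Corollary~\ref{C.fin_many}: since $W$ is closed under pairwise unions and intersections and contains $\emptyset$ and $A$ by construction~\eqref{d.W_frm_wn*m}, it suffices to show that no crossword $C\colon A\times A\to 2$ with all rows and columns in $W$ has infinitely many distinct rows. This proceeds by two reductions. First, Corollary~\ref{C.unctbl_C} forbids uncountably many distinct rows or columns, so any crossword over $W$ has at most countably many of each; hence all its rows and columns are lattice terms in a single countable family $w_{n_\ell,\gamma_\ell}$ $(\ell\in\omega)$. Second, I encode this family as one element $a'\in A'$ via~\eqref{d.a'=} and restrict $C$ to the island $\{a'\}\times 2^\omega$. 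By the remark following~\eqref{d.wn*m}, the chosen generators act there exactly like the generators $e_\ell$ of Theorem~\ref{T.2^*w}; so by the proof of that theorem the induced crossword on $\{a'\}\times 2^\omega\cong 2^\omega$ has only finitely many distinct rows and columns.

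The step I expect to be the main obstacle is lifting this island-finiteness back to the global crossword $C$, and the tool for it is the freeness of the distributive lattice on the $w_{n,\gamma}$. Distinct rows of $C$ represent distinct elements of $W$, hence distinct free-distributive-lattice terms, which therefore remain distinct after restriction to the columns indexed by the island; so an infinite set of distinct rows of $C$ would, via Lemma~\ref{L.2^*a}, force infinitely many distinct full columns of $C$ indexed by members of $\{a'\}\times 2^\omega$. These columns are themselves elements of $W$, so applying freeness a second time, their restrictions to the island rows stay distinct. Thus the induced crossword on $\{a'\}\times 2^\omega$ would have infinitely many distinct columns, contradicting the finiteness just obtained. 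Hence $C$ has only finitely many distinct rows, and Corollary~\ref{C.fin_many} gives that $(A,W)$ is a Pratt comonoid.

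Finally, non-discreteness is immediate: a discrete comonoid admits every $C\colon A\times A\to 2$ as a crossword with all rows and columns in $W=2^A$, whereas~\eqref{d.ctbl_on_A*l} (or Corollary~\ref{C.unctbl_C}) shows that crosswords over the present $W$ are drastically constrained, so $W\neq 2^A$. This completes the verification of all the assertions of the theorem.
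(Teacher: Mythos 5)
Your proposal is correct and follows essentially the same route as the paper: reduce to Corollary~\ref{C.fin_many}, use Corollary~\ref{C.unctbl_C} to get countably many distinct rows and columns, encode those generators into a single $a'\in A'$, invoke the island argument from Theorem~\ref{T.2^*w}, and lift finiteness back to $C$ via freeness of the distributive lattice on the $w_{n,\gamma}$, with the $\!T_1,\!$ cardinality, and non-discreteness claims handled exactly as in the text. Your explicit second application of freeness (to show the distinct full columns indexed by island members remain distinct when restricted to island rows) is a point the paper leaves implicit, and is a worthwhile clarification.
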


\section{Can a \texorpdfstring{$\!T_1\!$}{T1} Pratt comonoid structure be countably infinite?  First results}\label{S.free}

We saw in \S\ref{S.countable} that a $\!T_1\!$ Pratt comonoid $(A,W)$
such that $A$ is countable must be discrete.
What if we restrict $W$ rather than $A$?
In the example of the preceding section, $W$ was uncountable because
of the index-set $\omega_1$ occurring in the definition of its
generators $w_{n,\gamma};$ and we needed this uncountability
in proving $\!T_1\!$-ness, i.e., condition~\eqref{d.wn*mT1}.
(We used it again in proving Lemma~\ref{L.unctbl}, but we wouldn't
have had to prove that lemma if in place of the $\omega_1$ in our
definitions we had been able to use, say, $\omega,$ since then
$W$ would have been countable.)

So let us pose:

\begin{question}\label{Q.ctbl_W}
Does there exist an infinite $\!T_1\!$ Pratt comonoid
$(A,W)$ such that $W$ is countable?

If not, does there exist a non-discrete infinite $\!T_1\!$ Pratt
comonoid such that $W$ is countably generated (under
the closure operator of forming diagonals of crosswords)?
\end{question}

We have not been able to answer either of these questions.

Our first thought was that if we took an uncountable set
$A$ and a fairly ``random'' countable
$\!T_1\!$ family of subsets, and closed it under finite unions and
intersections, then the ``randomness'' might
prevent the resulting set $W$ from having
crosswords with infinitely many distinct rows and columns,
so by Corollary~\ref{C.fin_many}, $W$ would be
a Pratt comonoid structure on $A.$

But Corollary~\ref{C.disjoint}
already warns us that there will be difficulties:
$W$ must not have infinite pairwise disjoint families.
In trying to find examples,
the authors came up with some intricate ways of constructing
$\!T_1\!$ lattices of subsets
of a set $A$ having no disjoint pairs of nonempty elements.
For instance, suppose we let $A$ be an
antichain of subsets of $\omega$ (a family of elements none of
which contains another), which can be taken to be uncountable, and for
each natural number $n$ let $e_n$ be the set of members of $A$
which contain $n;$ and consider the Pratt comonoid structure $W$
on $A$ generated by these $e_n.$
The condition that $A$ be an antichain guarantees that
the family $\{e_n\}$ is $\!T_1;\!$ and it is not hard to get
examples where the sublattice generated by
$\{e_n\mid n\in\omega\}\cup\{\emptyset,A\}$
is a free distributive lattice with $0$ and $1$ on the $e_n,$
and hence has no nontrivial pairs of disjoint elements.
Yet when we examined examples of this sort, we found repeatedly
that they admitted unexpected crosswords which brought more subsets
into $W,$ and ultimately led to discrete structures.
We shall now show why something like this was inevitable.

Again, we begin with a result on not-necessarily $\!T_1\!$
structures.

\begin{lemma}\label{L.x_i,y_i}
Let $(A,W)$ be a Pratt comonoid, $x_0\geq x_1\geq\dots$ an
$\!\omega\!$-indexed descending chain of elements of $W,$
and $y_0\leq y_1\leq\dots$ an $\!\omega\!$-indexed ascending chain
of elements of $W,$ such that either
$\bigwedge_{n\in\omega} x_n =\emptyset$
or $\bigvee_{n\in\omega} y_n =A.$

Then $\bigvee_{n\in\omega} x_n\wedge y_n\in W.$
\end{lemma}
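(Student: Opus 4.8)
The plan is to prove the lemma by exhibiting a single crossword $C\colon A\times A\to 2$ all of whose rows and columns lie in $W$ and whose diagonal word is exactly the target $z=\bigvee_{n\in\omega} x_n\wedge y_n$; condition~(ii) of Definition~\ref{D.chu_cm} then delivers $z\in W$. The crossword I would use is
\[
C(a,b)=1\ \Longleftrightarrow\ \text{there is some }n\in\omega\text{ with }a\in x_n\text{ and }b\in y_n.
\]
The diagonal comes out for free: $C(a,a)=1$ iff some $n$ has $a\in x_n$ and $a\in y_n$, i.e.\ iff $a\in x_n\wedge y_n$ for some $n$, i.e.\ iff $a\in z$.

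Next I would read off the rows and columns, exploiting the monotonicity of the two chains so that each infinite union collapses to a single term. For fixed $a$, the $a$-th column is $\bigvee_{n\,:\,a\in y_n} x_n$; since $\{n:a\in y_n\}$ is upward closed (the $y_n$ ascend) and the $x_n$ descend, this union equals the single word $x_m$ for $m$ least with $a\in y_m$ (and equals $\emptyset$ if there is no such $m$). Thus every column already lies in $W$, with no extra hypothesis needed. Dually, the $a$-th row is $\bigvee_{n\,:\,a\in x_n} y_n$; here $\{n:a\in x_n\}$ is downward closed (the $x_n$ descend), so when it is a nonempty bounded set the union collapses to a single $y_m\in W$, and when it is empty the row is $\emptyset$.

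The one remaining case — and the only place the hypothesis is used — is a row indexed by some $a$ with $a\in x_n$ for \emph{every} $n$, i.e.\ $a\in\bigwedge_n x_n$; for such an $a$ the row equals the genuine infinite union $\bigvee_{n\in\omega} y_n$, which a priori need not belong to $W$. This is exactly where the disjunctive assumption rescues the argument: if $\bigwedge_n x_n=\emptyset$ then no such index $a$ exists, so every row is $\emptyset$ or some $y_m$; while if $\bigvee_n y_n=A$ then each such row is simply $A\in W$. Either way all rows and columns lie in $W$, so $C$ is a crossword over $W$ with diagonal $z$, finishing the proof. I expect the only real subtlety to be this bookkeeping at the ``limit'' rows, namely checking that the single hypothesis supplied covers precisely the indices $a$ at which the collapsed row-union fails to be one of the given words $y_m$.
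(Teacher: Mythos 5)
Your proof is correct, and it takes a genuinely more direct route than the paper's. You build the single crossword $C=\bigvee_{n\in\omega} x_n\times y_n$ and check by hand that monotonicity collapses every column to $\emptyset$ or some $x_m,$ and every row to $\emptyset,$ some $y_m,$ or the limit union $\bigvee_n y_n,$ with the disjunctive hypothesis entering exactly to dispose of the limit rows: under $\bigwedge_n x_n=\emptyset$ they do not occur, and under $\bigvee_n y_n=A$ they equal $A\in W.$ The paper instead splits into the two cases at the outset: when $\bigwedge_n x_n=\emptyset$ it observes that each $a$ lies in only finitely many of the sets $x_n\wedge y_n$ and invokes Lemma~\ref{L.near_disjoint} (whose proof uses the different crossword $\bigvee_n (x_n\wedge y_n)\times(x_n\wedge y_n),$ whose rows and columns are finite unions); when $\bigvee_n y_n=A$ it dualizes the first case via Lemma~\ref{L.new_fr_old}(i), obtaining $\bigwedge_n(x_n\vee y_n)\in W,$ and then needs the identity $\bigwedge_n(x_n\vee y_n)=y_0\vee\bigvee_n(x_n\wedge y_{n+1})$ plus a reindexing (prepending $\emptyset$ to the chain of $\!y\!$'s) to land on the stated conclusion. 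What the paper's route buys is reuse of machinery needed elsewhere anyway (Lemma~\ref{L.near_disjoint} underlies Corollary~\ref{C.disjoint}) and no bookkeeping about individual rows; what yours buys is a unified, self-contained argument in which both hypotheses are handled by one construction and the role of the hypothesis is completely transparent --- indeed your crossword is exactly the one the paper later uses in Proposition~\ref{P.inf_crswd}. One small point you did handle correctly: the collapsed rows and columns $\emptyset,$ $A,$ $x_m,$ $y_m$ all lie in $W$ by Definition~\ref{D.chu_cm}(i) and the hypotheses, so your argument does not even need closure of $W$ under finite unions.
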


\begin{proof}
In the case where $\bigwedge_{n\in\omega} x_n =\emptyset,$
note that any $a\in A$ can belong to only finitely many
of the $x_n,$ hence, a fortiori, can belong to only
finitely many of the sets $x_n\wedge y_n.$
Hence the conclusion is an immediate consequence of
Lemma~\ref{L.near_disjoint}.

Under the alternative hypothesis $\bigvee_{n\in\omega} y_n =A,$
we get the dual of the preceding result,
with the roles of the $x_i$ and $y_i$ interchanged;
but this does not quite give the conclusion we want.
Rather, it says that given $x_0\geq x_1\geq\dots$ and
$y_0\leq y_1\leq\dots$ with $\bigvee_{n\in\omega} y_n=A,$ we have
$\bigwedge_{n\in\omega} x_n\vee y_n\in W.$
Now since $\bigvee_{n\in\omega} y_n=A,$ and the $y_n$ form an increasing
chain, every $a\in A$ lies in almost all the sets $x_n\vee y_n.$
If in fact $a\in y_0,$ it lies in all of them;
if not, it lies in all of them if and only if
it lies in all $x_n$ {\em before} the point where it appears
in $y_n.$
These observations together show that $\bigwedge_{n\in\omega}
x_n\vee y_n=y_0\vee\bigvee_{n\in\omega} x_n\wedge y_{n+1};$
so our dualized result says that
$y_0\vee\bigvee_{n\in\omega} x_n\wedge y_{n+1}\in W.$

If we now apply this result with the sequence $y_n$ replaced
by the sequence that has $\emptyset$ in place of $y_0,$ and
$y_{n-1}$ in place of $y_n$ for $n>0,$ we get the desired conclusion.
\end{proof}

The next result is somewhat more complicated to state, so for
simplicity we will only formulate the conclusion under one of the
two dual hypotheses.
But we remark that if both conditions
$\bigwedge_{m\in\omega} x_m =\emptyset$ and
$\bigvee_{n\in\omega} y_n =A$ hold, then the final hypothesis
says that although the sequence of $\!x\!$'s gets small, and the
sequence of $\!y\!$'s gets large, no $y_n$ contains any $x_m.$
The form shown is the modification of that condition needed
when $\bigvee_{n\in\omega} y_n$ is not necessarily everything.

When we refer to $W$ containing a complete
sublattice, we mean a subset closed in $2^A$ under
(possibly infinite) unions and intersections,
but not necessarily containing $\emptyset$ or $A.$

\begin{proposition}\label{P.continuum}
Again let $(A,W)$ be a Pratt comonoid, $x_0\geq x_1\geq\dots$ an
$\!\omega\!$-indexed descending chain of elements of $W,$
and $y_0\leq y_1\leq\dots$ an $\!\omega\!$-indexed ascending chain
of elements of $W,$ and
suppose that $\bigwedge_{m\in\omega} x_m =\emptyset.$
Suppose moreover that for no pair $(m,n)\in\omega\times\omega$
does $y_n$ contain $x_m\wedge\bigvee_{i\in\omega} y_i.$

Then $W$ has at least continuum cardinality;
in fact, it contains a complete sublattice isomorphic to the
lattice of all subsets of $\omega.$
\end{proposition}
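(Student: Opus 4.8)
The plan is to embed the full powerset lattice $2^\omega$ into $W$ as a complete sublattice. Using the two hypotheses I would first extract a ``staircase'' of indices from the chains together with a sequence of witness points, then attach to each $T\subseteq\omega$ a single element $z_T\in W$ built by Lemma~\ref{L.x_i,y_i}, and finally verify that $T\mapsto z_T$ is injective and preserves arbitrary joins and meets. Since the image is then a complete sublattice of $W$ isomorphic to $2^\omega$, this yields $\card(W)\geq 2^{\aleph_0}$ and the stronger structural claim at once. Throughout, write $Y=\bigvee_{i}y_i$.

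The first step is the recursive construction of indices $m_0<m_1<\cdots$, levels $n_0<n_1<\cdots$, and points $a_k\in A$ with the three properties: $a_k\in x_{m_k}$; $a_k\in y_{n_{k+1}}-y_{n_k}$; and $a_k\notin x_{m_j}$ for all $j>k$. Starting from $m_0=n_0=0$, the hypothesis that no $y_n$ contains $x_m\wedge Y$ is exactly what permits, at stage $k$, the choice of a point $a_k\in x_{m_k}\wedge Y$ lying outside $y_{n_k}$; taking $n_{k+1}$ to be the least level with $a_k\in y_{n_{k+1}}$ (which exceeds $n_k$) secures the first two properties. Then the assumption $\bigwedge_m x_m=\emptyset$ lets me choose $m_{k+1}>m_k$ large enough that each of $a_0,\dots,a_k$ has already left the $x$-chain, which gives the third property for all later indices by nesting. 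Thus both hypotheses are used, and in this order.

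For the second step, given $T\subseteq\omega$ set $f_T(k)=n_{k+1}$ when $k\in T$ and $f_T(k)=n_k$ otherwise. Because the $n_\bullet$ are strictly increasing, $f_T$ is nondecreasing, so $k\mapsto y_{f_T(k)}$ is an ascending chain in $W$ while $k\mapsto x_{m_k}$ is a descending chain with empty meet; Lemma~\ref{L.x_i,y_i} therefore produces $z_T=\bigvee_{k}(x_{m_k}\wedge y_{f_T(k)})\in W$. Injectivity is then immediate from the witnesses: the three properties of $a_k$ force $a_k\in z_T$ iff $k\in T$. For the lattice structure the decisive observation is that, for an \emph{arbitrary} point $b$, membership $b\in z_T$ depends on $T$ only through one band. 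Namely, if $k^\ast(b)$ is the largest $k$ with $b\in x_{m_k}$ (finite, since the $x_m$ meet to $\emptyset$), then monotonicity of $f_T$ and of the $y$-chain give $b\in z_T$ iff $b\in y_{f_T(k^\ast(b))}$. From this single-band criterion, a short trichotomy on whether $b$ lies in $y_{n_{k^\ast}}$, in $y_{n_{k^\ast+1}}-y_{n_{k^\ast}}$, or outside $y_{n_{k^\ast+1}}$ yields both $z_{\bigcup_i T_i}=\bigvee_i z_{T_i}$ and $z_{\bigcap_i T_i}=\bigwedge_i z_{T_i}$, so the image is closed under arbitrary unions and intersections in $2^A$ and is isomorphic to $2^\omega$.

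The main obstacle is precisely the last point. The bands $x_{m_k}\wedge y_{f_T(k)}$ overlap heavily across different $k$, so a priori it is unclear that $z_T$ even determines $T$, and the meet-preserving direction $\bigwedge_i z_{T_i}\subseteq z_{\bigcap_i T_i}$ looks dangerous, since a common point of all the $z_{T_i}$ might be caught in different bands for different $i$. The reduction to the single band at level $k^\ast(b)$---made possible by choosing the $x$-indices and $y$-levels to increase together and by the emptiness of $\bigwedge_m x_m$---is what collapses this multi-band condition to one band and makes the verification routine. Getting the recursion in the first step to deliver witnesses satisfying all three properties simultaneously is the other place where care is required, but the hypotheses are tailored to exactly this.
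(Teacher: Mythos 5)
Your proposal is correct, and its first half coincides with the paper's own argument: your recursive choice of $m_k,$ $n_{k+1},$ and the witness $a_k\in x_{m_k}\wedge\bigvee_i y_i$ with $a_k\notin y_{n_k}$ and $a_k\notin x_{m_{k+1}}$ is exactly the recursion the paper uses to establish its key condition~\eqref{d.mi_ni}; the only difference at this stage is that the paper discards the witness points once \eqref{d.mi_ni} is proved, while you carry them through the rest of the proof. Where the two arguments genuinely diverge is in how one gets the continuum-many required sets into $W$ and sees that they form a complete sublattice. The paper proceeds lattice-theoretically: it forms $z=\bigvee_i x_{m(i)}\wedge y_{n(i)}$ and $z_i=z\vee(x_{m(i)}\wedge y_{n(i+1)}),$ proves $z_i>z$ and $z_i\wedge z_j=z$ from \eqref{d.mi_ni} using distributivity, and then invokes the relativization Lemma~\ref{L.new_fr_old}(iv) together with Corollary~\ref{C.disjoint} (closure under disjoint unions) to conclude that all unions $\bigvee_{k\in T}z_k$ lie in $W,$ the disjointness of the sets $z_i-z$ making the complete-sublattice structure automatic. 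You instead apply Lemma~\ref{L.x_i,y_i} once for each $T\subseteq\omega,$ observing that $z_T=\bigvee_k\bigl(x_{m_k}\wedge y_{f_T(k)}\bigr)$ is itself of the form covered by that lemma (your $f_T$ is nondecreasing and the $x_{m_k}$ still meet to $\emptyset$), and then verify injectivity and preservation of arbitrary joins and meets pointwise via your single-band criterion. The two constructions yield the same sublattice, since your $z_T$ equals $z\vee\bigvee_{k\in T}(z_k-z)$ in the paper's notation; so the difference is purely one of verification. The paper's route is shorter once its supporting machinery is in place; yours is more self-contained, needing only Lemma~\ref{L.x_i,y_i} beyond the construction, at the cost of the pointwise trichotomy analysis, which in a final write-up you should carry out explicitly, including the degenerate case $b\notin x_{m_0},$ where $k^\ast(b)$ is undefined and $b$ lies in no $z_T.$
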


\begin{proof}
Let us show, first, that we can construct sequences of integers
$m(0)<m(1)<\dots$ and $n(0)<n(1)<\dots$  such that for all $i,$
\begin{equation}\begin{minipage}[c]{35pc}\label{d.mi_ni}
$x_{m(i)}\wedge y_{n(i+1)}\ \not\leq
\ (x_{m(i)}\wedge y_{n(i)})\vee(x_{m(i+1)}\wedge y_{n(i+1)}).$
\end{minipage}\end{equation}
We take $m(0)$ and $n(0)$ arbitrary.
Suppose, recursively, that we have found
$m(0),\dots,m(j)$ and $n(0),\dots,n(j)$
so that~\eqref{d.mi_ni} holds for all $i<j.$
By the final hypothesis of the proposition,
$y_{n(j)}\not\geq x_{m(j)}\wedge\bigvee_{i\in\omega} y_i,$
so there is some $a$ in the latter set that is not in the former;
i.e., which lies in $x_{m(j)}$ and some $y_i$ but not in $y_{n(j)}.$
In particular, we can find $n(j{+}1) > n(j)$
such that $a\in y_{n(j+1)};$ moreover, since
$\bigwedge_{m\in\omega} x_m =\emptyset,$ we can find $m(j{+}1) > m(j)$
such that $a\notin x_{m(j+1)}.$
With $a,$ $n(j{+}1),$ and
$m(j{+}1)$ so chosen we see that~\eqref{d.mi_ni} also holds for $i=j.$
Thus we get sequences $m(i)$ and $n(i)$
satisfying~\eqref{d.mi_ni} for all $i\in\omega.$

Now let
\begin{equation}\begin{minipage}[c]{35pc}\label{d.z=}
$z\ =\ \bigvee_{i\in\omega}\,x_{m(i)} \wedge y_{n(i)}.$
\end{minipage}\end{equation}
Since $\{m(i)\}$ is a cofinal subsequence of $\omega,$ we
have $\bigwedge_{i\in\omega} x_{m(i)} = \emptyset,$ hence
by Lemma~\ref{L.x_i,y_i}, $z\in W.$
We claim that if for all $i\in\omega$ we define
\begin{equation}\begin{minipage}[c]{35pc}\label{d.zi=}
$z_i\ =\ z\vee (x_{m(i)}\wedge y_{n(i+1)})$
\end{minipage}\end{equation}
(again an element of $W),$ then
\begin{equation}\begin{minipage}[c]{35pc}\label{d.zi>}
for all $i,$ $z_i>z,$
\end{minipage}\end{equation}
but
\begin{equation}\begin{minipage}[c]{35pc}\label{d.zizj}
for $i\neq j,$ $z_i\wedge z_j = z.$
\end{minipage}\end{equation}
To see~\eqref{d.zi>}, apply~\eqref{d.mi_ni} to get an element
$a\in x_{m(i)}\wedge y_{n(i+1)}$ that is not in
$x_{m(i)}\wedge y_{n(i)}$ or $x_{m(i+1)}\wedge y_{n(i+1)}.$
Thus, $a\in (x_{m(i)} - x_{m(i+1)}) \wedge (y_{n(i+1)} - y_{n(i)}).$
The fact that $a$ is in the first of these two difference-sets
tells us that for $j>i, $ $a\notin x_{m(j)},$
while the fact that it is in
the second set tells us that for $j\leq i,$ $a\notin y_{n(j)},$ so
in each case, $a\notin x_{m(j)}\wedge y_{n(j)}.$
Hence $a\in z_i$ does not lie
in any of the joinands of~\eqref{d.z=}, giving~\eqref{d.zi>}.

To get~\eqref{d.zizj}, note that ``$\geq$'' holds
by~\eqref{d.zi>}, so it suffices to prove ``$\leq$''.
Assume without loss of generality that $i < j.$
Then
\begin{equation}\begin{minipage}[c]{28pc}\label{d.=>zizj}
\hspace*{-3.25pc}$z_i\ \wedge z_j\ =
\ z\vee((x_{m(i)}\wedge y_{n(i+1)})\wedge(x_{m(j)}\wedge y_{n(j+1)}))$
\quad (by~\eqref{d.zi=} and distributivity)\\
$=\ z \vee (x_{m(j)} \wedge y_{n(i+1)})$\hspace{1.1em}
(because $x_{m(j)}\leq x_{m(i)}$ and $y_{n(i+1)}\leq y_{n(j+1)})\\
\leq\ z \vee (x_{m(j)} \wedge y_{n(j)})$\hspace{2em}
(because $i{+}1\leq j,$ so $y_{n(i+1)}\leq y_{n(j)})$\\
$=\ z$ \hspace{9.05em}
(by~\eqref{d.z=}, in particular, the term indexed by $j).$
\end{minipage}\end{equation}

Statements~\eqref{d.zi>} and~\eqref{d.zizj} together say that the $z_i$
are sets properly containing $z,$ which on removing $z$ give
pairwise disjoint sets.
By Lemma~\ref{L.new_fr_old}(iv), the results of removing $z$
from all elements of $W$ which contain it form a
Pratt comonoid on $A-z;$ so the $z_i$ yield pairwise disjoint nonempty
elements of that Pratt comonoid, hence by
Corollary~\ref{C.disjoint}, the
unions of arbitrary subsets of these sets are members of that
Pratt comonoid.
The corresponding elements of $W$ give a complete
lattice of subsets of the asserted form.
\end{proof}

A problem with finding applications of
the above proposition is that it is often hard to
find natural hypotheses on a Pratt comonoid that lead to
a countable descending chain with empty intersection,
or, if we have such a chain, that lead to an ascending chain
which does not eventually ``swallow up'' the descending chain.
However in the next result we get both of these, using
the $\!T_1\!$ assumption together with the condition that $(A,W)$
be generated by the sort of family discussed
following Question~\ref{Q.ctbl_W} above.

When we speak of $(A,W)$ being ``generated by'' a family
$S$ of subsets of $A,$ we mean, of course, that $W$ is the least
Pratt comonoid structure on $A$ which contains $S.$

\begin{theorem}\label{T.continuum}
Suppose $(A,W)$ is a $\!T_1\!$ Pratt comonoid which can
be generated by a countably infinite family $S$ of subsets of
$A$ that satisfy no nontrivial distributive lattice relations
\textup{(}i.e., which form a set of free generators of a free
distributive lattice\textup{)}.
Then $W$ has at least continuum cardinality, and in fact contains a
complete sublattice isomorphic to the lattice of subsets of $\omega.$
\end{theorem}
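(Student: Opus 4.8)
The plan is to reduce everything to Proposition~\ref{P.continuum}: it suffices to exhibit in $W$ a descending chain $x_0\geq x_1\geq\cdots$ with $\bigwedge_m x_m=\emptyset$, an ascending chain $y_0\leq y_1\leq\cdots$, and to check the non-swallowing hypothesis that no $y_n$ contains $x_m\wedge\bigvee_i y_i$. I would get the ascending chain and, crucially, the non-swallowing condition essentially for free from the \emph{freeness} of $S$, while the \emph{empty meet} is where the $\!T_1\!$ hypothesis does the real work. As a preliminary reduction, if $W$ happens to contain an infinite pairwise disjoint family of nonempty sets, then Corollary~\ref{C.disjoint} already puts the unions of all its subfamilies into $W$, giving a complete sublattice isomorphic to $2^\omega$; so I may assume $W$ has no such family.

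The heart of the argument is the following consequence of $\!T_1.\!$ \emph{No point of $A$ lies in every member of $S$}, i.e.\ $\bigwedge_{s\in S}s=\emptyset$. To prove this, fix $a^*\in A$ and consider the property $(\star)$ that every nonempty member of $W$ contains $a^*$. I claim $(\star)$ is preserved by all the operations generating $W$. It clearly survives pairwise meets and joins; for the diagonal operation, let $C$ be a crossword with all rows and columns in $W$ and diagonal $z$. If $z\neq\emptyset$, choose $b\in z$; then the $\!b\!$-th row contains $b,$ so is nonempty and hence, by $(\star),$ contains $a^*,$ i.e.\ $(b,a^*)\in C;$ thus the $\!a^*\!$-th column contains $b,$ so is nonempty and hence contains $a^*,$ i.e.\ $(a^*,a^*)\in C,$ giving $a^*\in z.$ Now if $a^*$ lay in all of $S,$ then every generator would satisfy $(\star)$ (the nonempty ones contain $a^*,$ and $A$ contains $a^*,$ while $\emptyset$ is vacuously fine), so $(\star)$ would hold throughout $W;$ this contradicts $\!T_1,\!$ which for any $b\neq a^*$ supplies a member of $W$ containing $b$ but not $a^*.$ Hence $\bigwedge_{s\in S}s=\emptyset.$

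With this in hand I would build the chains from two disjoint infinite subfamilies of $S.$ Split the index set as $\omega=I\sqcup J$ with both parts infinite, let $x_m$ be the meet of the first $m{+}1$ generators indexed by $I,$ and let $y_n$ be the join of the first $n{+}1$ generators indexed by $J.$ The chain $y_n$ is visibly ascending, and the non-swallowing condition is read straight off freeness: for given $m,n$ a point realizing the finite pattern ``inside the $x_m$-generators and inside one further $J$-generator, but outside the $y_n$-generators'' lies in $x_m\wedge\bigvee_i y_i$ but not in $y_n,$ and such a point exists exactly because $S$ freely generates a distributive lattice (all the indices involved are distinct, as $I$ and $J$ are disjoint). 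The descending chain then has $\bigwedge_m x_m=\bigwedge_{i\in I}s_i.$

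The main obstacle is reconciling emptiness of this last meet with the need to reserve an infinite disjoint block $J$ for the ascending chain: the diagonal argument of the second paragraph yields empty meet only for the \emph{full} family $S,$ and a proper subfamily indexed by $I$ need not have empty intersection. I expect to resolve this by exploiting the assumption that $W$ has no infinite disjoint family. Writing $T(a)=\{\,i\mid a\in s_i\,\}$ for each $a\in A,$ emptiness of $\bigwedge_{i\in I}s_i$ is exactly the condition that $I\not\subseteq T(a)$ for every $a,$ and the case that \emph{no} infinite coinfinite $I$ works amounts to the excluding sets $\omega\setminus T(a)$ forming singletons that cover $\omega$; I would argue that such a configuration produces an infinite disjoint family in $W,$ contradicting our standing assumption, so that a suitable $I$ (with infinite complement $J$) can be chosen. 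Once $I$ and $J$ are fixed, the chains above meet all hypotheses of Proposition~\ref{P.continuum}, which delivers a complete sublattice of $W$ isomorphic to the lattice of subsets of $\omega,$ and in particular $\card(W)\geq 2^{\aleph_0}.$
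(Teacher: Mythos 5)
Your reduction to Proposition~\ref{P.continuum} is the right frame, and two pieces of your argument are correct and worth noting: the preliminary reduction via Corollary~\ref{C.disjoint}, and especially your second paragraph, which correctly shows that $\{x\subseteq A\mid x=\emptyset\text{ or }a^*\in x\}$ is itself a Pratt comonoid structure (your crossword-diagonal computation is valid), whence $\bigwedge_{s\in S}s=\emptyset$; your verification of the non-swallowing hypothesis from freeness is also exactly right. The genuine gap is where you yourself located the ``main obstacle'': producing an infinite, coinfinite $I\subseteq\omega$ with $\bigwedge_{i\in I}s_i=\emptyset$. Writing $E(a)=\{i\mid a\notin s_i\}$, the failure of \emph{every} infinite coinfinite $I$ says precisely that every infinite coinfinite subset of $\omega$ contains some point's excluding set $E(a)$; it does not ``amount to'' the $E(a)$ being singletons covering $\omega$. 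The excluding sets blocking you could a priori all be two-element sets, or all be infinite (and in fact, under freeness they would have to be infinite), so your dichotomy misses the real bad cases. Worse, even granting a bad configuration, your route to a contradiction is not available: the natural pairwise disjoint candidates are the meets $\bigwedge_{i\in T(a_k)}s_i$ over traces $T(a_k)=\{i\mid a_k\in s_i\}$ of points with pairwise disjoint excluding sets, but these are \emph{infinite} intersections, which need not lie in $W$; while no two nonempty finite lattice expressions in the $s_i$ can ever be disjoint, since $p\wedge q=\emptyset$ would yield the nontrivial relation $p\wedge q\leq s_j$ for a fresh generator $s_j$, contradicting freeness. So the bad case cannot be killed by exhibiting an infinite disjoint family in $W$.

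The missing ingredient is a further consequence of $\!T_1,\!$ provable by exactly your preservation trick: for fixed $a\neq b,$ the sets $x$ satisfying ``$a\in x\implies b\in x$'' also form a Pratt comonoid structure, so $\!T_1\!$-ness of $W$ forces $\!T_1\!$-ness of $S,$ and hence the traces $T(a)$ form an antichain in $2^\omega.$ The paper's proof then sidesteps your bad case entirely by choosing $I$ \emph{after} choosing a point: dispose of countable $A$ by Theorem~\ref{T.countable}; for $A$ uncountable, injectivity of $a\mapsto T(a)$ shows only countably many points have cofinite trace, so pick $a$ with $E(a)$ infinite and take $I$ to be an infinite coinfinite set properly containing $T(a).$ Then $\bigwedge_{i\in I}s_i=\emptyset$ is immediate from $\!T_1\!$ at the single point $a$: every $b\neq a$ is excluded by a generator indexed in $T(a)\subseteq I,$ and $a$ itself by any index in $I-T(a).$ (Alternatively, your dichotomy can be repaired outright: in the bad case, no trace is $\omega$ by your step, no trace is cofinite by freeness plus the antichain property, and a coinfinite trace $T$ covering an infinite coinfinite $I$ leads to a trace covering $T\cup\{e\},$ $e\notin T,$ hence properly containing $T$ --- contradicting the antichain property. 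This version even avoids the countable/uncountable split.) In short, the contradiction in the bad case must come from the antichain consequence of $\!T_1,\!$ not from the absence of disjoint families.
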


\begin{proof}
If $A$ is countable, Theorem~\ref{T.countable} gives the desired
conclusion, so assume $A$ uncountable.
Now if we associate to
each $a\in A$ the set of members of $S$ which contain it, then to
distinct elements of $A$ we will associate distinct subsets of $S$
(because $W$ is $\!T_1,\!$ so $S$ must also be).
Hence only countably many
elements of $A$ can yield cofinite subsets of $S,$ so
let $a\in A$ be an element such that
the associated set is non-cofinite in $S.$
Let us now write $S$ as a disjoint union of two infinite sets: a set
$S_1$ which {\em properly} contains the set of elements of $S$
containing $a,$ but is still
non-cofinite; and its complement, $S_2=S-S_1.$

Since $S_1$ properly contains the
set of elements of $S$ containing $a,$ it includes
some element not containing $a,$ so the intersection of the
members of $S_1$ does not contain $a.$
On the other hand, for any $b\neq a,$ the $\!T_1\!$ condition tells us
that some $s\in S$ which contains $a$ (and hence belongs
to $S_1)$ does not contain $b;$ so the intersection of the members
of $S_1$ is empty.

Let us now index each of $S_1$ and $S_2$ by $\omega,$ and for each
$i\in\omega,$ let $x_i$ be the intersection of the first $i$ elements
of $S_1,$ and $y_i$ the union of the first $i$ elements of $S_2.$
We claim that for all $m$ and $n$ we have
$y_n \not\geq x_m\wedge (\bigvee_{i\in\omega} y_i).$
For if $y_n \geq x_m\wedge (\bigvee_{i\in\omega} y_i),$
then in particular, $y_n \geq x_m\wedge y_{n+1},$
which is a nontrivial lattice relation
among the first $m$ elements of $S_1$ and the first $n{+}1$ elements of
$S_2,$ contradicting the assumption on $S.$
So no such relation holds, hence
Proposition~\ref{P.continuum} gives the conclusion of the theorem.
\end{proof}

Did the above proof use the full strength of our hypothesis
that the elements of $S$ satisfy no nontrivial distributive
lattice relations?
If we write $S_1$ as $\{s_{1,i}\mid i\in\omega\}$
and $S_2$ as $\{s_{2,i}\mid i\in\omega\},$
then we find that the condition used at the end of the proof
reduces to the statement $s_{2,0}\vee\dots\vee s_{2,n-1}\not\geq
s_{1,0}\wedge\dots\wedge s_{1,m-1}\wedge s_{2,n}.$
Now the condition that the elements of $S$
satisfy no nontrivial distributive lattice relations
is indeed equivalent to saying that
for no two disjoint finite families of elements of $S$ does the join
of one majorize the meet of the other.
(To see this, note that any lattice word in a set of
elements of a distributive lattice can be reduced to a join
of meets, and also, dually, to a meet of joins.
Hence every relation on a family of elements of such a lattice is
equivalent to the statement that some meet of joins majorize
some join of meets.
But a meet majorizes an element if and only if each meetand
majorizes it; and a join is majorized by an element if and
only if each joinand is majorized by it; so every such relation
reduces to a family of relations each of which says that a join
of elements of our given set majorizes a meet of elements of that set;
hence freeness says that no such nontrivial relation,
i.e., no such relations in which no generator appears
as both a meetand and a joinand, holds.)

However, looking at the quantifications involved in the
proof of the theorem, one finds that one can slightly
restrict the set of relations that one needs to assume do not hold.

\begin{corollary}[to the proof of Theorem~\ref{T.continuum}]\label{C.continuum}
In the situation of Theorem~\ref{T.continuum}, the condition
that $S$ generate a free distributive lattice can be weakened
to say that there exists a partition of $S$ into finitely
many disjoint subsets, $S^{(0)},\dots,\,S^{(k-1)}$ such that no
relation $t_0\vee\dots\vee t_{n-1}\geq s_0\wedge\dots\wedge s_{m-1}$
holding in $W$ with every $t_j$ distinct from
every $s_i$ has all but at most
one of the $s_i$ belonging to the same set $S^{(i)}.$
\end{corollary}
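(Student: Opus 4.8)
The plan is to run the proof of Theorem~\ref{T.continuum} essentially unchanged, and merely to verify that the weakened hypothesis still supplies the single thing that proof drew from freeness. Inspecting that argument, freeness was invoked only to rule out the relations
\[
s_{2,0}\vee\dots\vee s_{2,n-1}\ \geq\ s_{1,0}\wedge\dots\wedge s_{1,m-1}\wedge s_{2,n}
\qquad(s_{1,\ell}\in S_1,\ s_{2,\ell}\in S_2).
\]
The meet-side of such a relation consists of the $S_1$-generators $s_{1,0},\dots,s_{1,m-1}$ together with the one extra generator $s_{2,n}$; hence it will be forbidden by the partition hypothesis as soon as those $S_1$-generators all lie in a single block $S^{(p)}$, for then all but the one element $s_{2,n}$ lie in $S^{(p)}$. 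So the entire task is to choose the splitting $S=S_1\sqcup S_2$ of that proof with $S_1$ contained in one block, while retaining the property $\bigcap S_1=\emptyset$ (equivalently $\bigwedge_m x_m=\emptyset$) on which Lemma~\ref{L.x_i,y_i} and Proposition~\ref{P.continuum} rest.

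First I would record a useful freedom: refining the given partition into more (still finitely many) blocks preserves the hypothesis, since any relation that is single-block-concentrated for the finer partition is a fortiori single-block-concentrated for the coarser one, and so already fails. Thus I may take the partition as fine as convenient. Next, exactly as in Theorem~\ref{T.continuum}, I reduce to $A$ uncountable and extract from $T_1$ the two facts used there: the map $a\mapsto\{s\in S\mid a\in s\}$ is injective, and $\bigcap S=\emptyset$. The clean case is when some block is infinite with $\bigcap S^{(p)}=\emptyset$: then I take $S_1$ to be a suitable infinite, non-cofinite subset of $S^{(p)}$ still having empty intersection, put $S_2=S\setminus S^{(p)}$ (or the complementary part of $S^{(p)}$), and the proof of Theorem~\ref{T.continuum} goes through verbatim, with the displayed relations now forbidden by the partition hypothesis in place of freeness.

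The main obstacle is the remaining case, in which every block has nonempty intersection $P_p=\bigcap S^{(p)}\neq\emptyset$, so that $\bigcap_p P_p=\bigcap S=\emptyset$ only through the interaction of the blocks. No single-block $S_1$ can then have empty intersection, since $S_1\subseteq S^{(p)}$ forces $\bigcap S_1\supseteq P_p$, and refinement cannot help, as each refined block again sits inside some $S^{(p)}$; combining blocks by a \emph{meet} to reach $\emptyset$ would reintroduce a cross-block meet-side that the hypothesis does not control. The route I would pursue is to keep $S_1\subseteq S^{(p)}$ (so the displayed relations stay single-block-concentrated, hence forbidden for every choice of the exceptional $s_{2,n}$) but to satisfy Proposition~\ref{P.continuum} through the \emph{dual} clause $\bigvee_n y_n=A$ of Lemma~\ref{L.x_i,y_i}, which $\bigcap_p P_p=\emptyset$ makes available by choosing $S_2$ inside $S\setminus S^{(p)}$ so that the ascending chain exhausts $A$. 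The delicate point I expect to dominate the work is that passing to this dual form (for instance via the complementation of Lemma~\ref{L.new_fr_old}(i)) converts the relevant non-relations into statements whose meet-side is no longer a pure meet of generators; certifying that these still reduce to the single-block-concentrated shape governed by the hypothesis—and hence that $z=\bigvee_i x_{m(i)}\wedge y_{n(i)}$ genuinely lies in $W$—uniformly over the finitely many candidate blocks is exactly where the real argument must be carried out.
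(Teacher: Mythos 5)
You have correctly isolated the one thing freeness does in the proof of Theorem~\ref{T.continuum}: it excludes the relations $s_{2,0}\vee\dots\vee s_{2,n-1}\geq s_{1,0}\wedge\dots\wedge s_{1,m-1}\wedge s_{2,n},$ and the stated hypothesis excludes such a relation once all meetands but one lie in a single block. But your proposal is not a proof. The case you yourself flag as the main obstacle --- every block $S^{(p)}$ has nonempty intersection --- is exactly where you stop, and that case is not vacuous: one can build $T_1$ families $S$ satisfying the partition hypothesis (say with two blocks, each a ``free'' family of generators whose infinite intersection is nonempty), so the deferred ``real argument'' is the entire content of the corollary. The impasse comes from your framing decision that the block-confined family must be $S_1,$ the family required to have empty intersection; when every block has nonempty intersection this is impossible, and, as you note, refining the partition cannot help.

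The missing idea is that one should confine the \emph{other} family to a block. The paper keeps $S_1$ exactly as in Theorem~\ref{T.continuum} (so emptiness of the intersection comes for free from $T_1$), and then uses a pigeonhole argument: since $S-S_1$ is infinite and there are only finitely many blocks, some block $S^{(i)}$ meets $S-S_1$ in an infinite set, and one takes $S_2=(S-S_1)\cap S^{(i)}.$ Every offending relation then has all of $s_{2,0},\dots,s_{2,n}$ inside $S^{(i)}.$ As worded, this directly excludes relations via concentration of the \emph{joinands} in one block, which is the complement-dual of the stated hypothesis; to match the hypothesis as stated, one runs the same construction in the complemented comonoid $(A,W^\neg)$ of Lemma~\ref{L.new_fr_old}(i), which is a $T_1$ Pratt comonoid generated by the complements of the members of $S,$ partitioned into the complemented blocks. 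De Morgan then converts the relation to be excluded into $s_{1,0}\vee\dots\vee s_{1,m-1}\vee s_{2,n}\geq s_{2,0}\wedge\dots\wedge s_{2,n-1}$ in $W,$ whose meet side is still a pure meet of generators, all lying in $S^{(i)};$ this is precisely the shape the stated hypothesis forbids, and a complete sublattice of $W^\neg$ isomorphic to the lattice of subsets of $\omega$ complements back to one in $W.$ This is exactly the dualization your final paragraph gestures at, and the difficulty you predict there --- that complementation spoils the pure-meet shape --- does not materialize; but since your write-up neither makes this choice of $S_2$ nor carries out that verification, the gap is genuine.
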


\begin{proof}
Assume $S$ has the above weakened property.
In the proof of Theorem~\ref{T.continuum}, after choosing
$S_1$ to properly contain the set of elements of $S$ containing $a,$
and to have infinite complement, note that this complement
must contain infinitely many members of one of our sets,
say $S^{(i)}.$
So rather than taking $S_2$ to be the full complement of $S_1$
in $S,$ let us take it to consist of the members of that complement
which belong to $S^{(i)}.$
We can now complete the proof as before.
At the last step, if we have a relation
$s_{2,0}\vee\dots\vee s_{2,n-1}\geq
s_{1,0}\wedge\dots\wedge s_{1,m-1}\wedge s_{2,n},$
we note that
the meetands on the right satisfy $s_{1,0},\dots,s_{1,m-1}\in S^{(i)},$
with at most the last meetand $s_{2,n}$ not in $S^{(i)},$
contradicting the hypothesis on $S.$
\end{proof}

We do not know how one might make use of this weaker hypothesis in
studying Question~\ref{Q.ctbl_W}.

\section{Further results on Pratt comonoids with \texorpdfstring{$W$}{W} smaller than the continuum}\label{S.filters}

The arguments in the preceding section used
the fact that if $(A,W)$ is a Pratt comonoid such that
$W$ has cardinality less than the continuum, then $W$ cannot contain
infinitely many pairwise disjoint sets
(by Corollary~\ref{C.disjoint}).
This fact restricts the structure of such comonoids in other
ways as well.

The next lemma seems likely to be known,
but since we do not know a reference, we will give the proof.
We will be applying it to a lattice, the $W$ of a Pratt comonoid
$(A,W),$ but since it only involves the operation $\wedge,$ the
natural context for stating it is that of a $\!\wedge\!$-semilattice.
Recall that this means a
set $L$ with a single idempotent commutative associative
operation $\wedge;$ one then
regards $L$ as partially ordered by taking $x\leq y$ if $x\wedge y=x.$

Let us make:

\begin{definition}\label{D.str_indec}
If $L$ is a $\!\wedge\!$-semilattice with least element $0,$ we
shall call two elements of $L$ {\em disjoint} if their meet is $0,$ and
we will call a nonzero element $x\in L$ {\em strongly indecomposable}
if there do not exist two disjoint nonzero elements $<x$ in $L.$

We will call two strongly indecomposable elements $x,\,y\in L$
{\em equivalent} if they are not disjoint.
\end{definition}

That the above condition is indeed
an equivalence relation on strongly indecomposable elements
is immediate from the definitions.

Three examples:
In the lattice of open-closed subsets of the Cantor set, regarded as a
$\!\wedge\!$-semilattice, there are no strongly indecomposable
elements.
In the lattice whose elements are all the neighborhoods of a point $p$
of a topological space, together with the empty set, which plays
the role of $0,$
all nonzero elements are strongly indecomposable, and form
a single equivalence class.
In the lattice of all subsets of a set $X,$ the singletons
are the strongly indecomposable elements, and no two distinct
singletons are equivalent.

\begin{lemma}\label{L.str_indec}
If $L$ is a $\!\wedge\!$-semilattice with least element $0$
which has no infinite family of pairwise disjoint nonzero elements,
then $L$ has only finitely many equivalence classes of
strongly indecomposable elements, and every nonzero element of $L$
majorizes at least one strongly indecomposable element.
\end{lemma}

\begin{proof}
There cannot be infinitely many equivalence classes of strongly
indecomposable elements of $L,$ because
a family of representatives of such equivalence classes would
form an infinite family of pairwise disjoint elements.

To prove that every nonzero $x\in L$ majorizes
at least one strongly indecomposable element, suppose $x>0$ does not.
In particular, $x$ is not itself strongly indecomposable, so there
exist disjoint nonzero elements $x_0,\,y_0<x.$
By our assumption on $x,$ the
element $x_0$ is not strongly indecomposable, so
it in turn majorizes disjoint elements $x_1,\,y_1<x_0.$
Continuing in this manner, we see that $y_0,$ $y_1,\dots$
will form an infinite family of pairwise disjoint nonzero elements,
contradicting the hypothesis on $L.$
\end{proof}

In the situation above, if $E_0,\dots,E_{n-1}$ are the
equivalence classes of strongly indecomposable elements, and
we map each $x\in L$ to the set of those $E_i$ such
that $x$ majorizes a member of $E_i,$ this yields
a homomorphism of $\!\wedge\!$-semilattices from $L$
to the $\!\wedge\!$-semilattice of subsets of
$\{E_0,\dots,E_{n-1}\},$ which sends only $0$ to the empty set.

By Corollary~\ref{C.disjoint}, if $(A,W)$ is a Pratt comonoid such
that $W$ has less than continuum cardinality,
the above lemma is applicable to $W.$
More generally, for every $w$ in such a $W,$ that lemma is
applicable to the lattice of members of $W$ containing $w$
by Lemma~\ref{L.new_fr_old}(iv); moreover these same statements
apply to the dual lattice to $W$ by Lemma~\ref{L.new_fr_old}(i).

Given elements $w<x$ in a $\!\wedge\!$-semilattice, let
us call $x$ strongly indecomposable {\em relative to} $w$ if
it is strongly indecomposable in the $\!\wedge\!$-semilattice
$\{y\in W\mid y\geq w\}$ (with $w$ regarded as $\!0\!$-element).
Likewise, in a $\!\vee\!$-semilattice $L$ with greatest element
$1,$ let us call an element $x$
{\em dually strongly indecomposable} if it is strongly indecomposable
in the dual $\!\wedge\!$-semilattice; and define dual strong
indecomposability relative to an element $w>x$ in the obvious way.
Then the above observations give:

\begin{corollary}\label{C.str_indec}
Suppose $(A,W)$ is a Pratt comonoid such that $W$ has less
than continuum cardinality.
Then for every $w\in W$ \textup{(}including $\emptyset),$
the set of equivalence classes of elements
strongly indecomposable relative to $w$ is finite, and every
element $>w$ majorizes at least one such element.
Likewise, the set of equivalence classes of elements
{\em dually} strongly indecomposable relative to $w$ is finite, and
every element $<w$ is majorized by at least one such element.\qed
\end{corollary}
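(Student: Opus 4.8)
The plan is to derive everything from Lemma~\ref{L.str_indec}, whose only hypothesis on a $\wedge$-semilattice $L$ is that $L$ have no infinite family of pairwise disjoint nonzero elements; once that hypothesis is verified, the finiteness of the set of equivalence classes and the majorization statement are read off directly. So the real work is to confirm that hypothesis in each relevant semilattice. I first treat the base case $w=\emptyset$ and the non-dual assertion, i.e.\ I apply Lemma~\ref{L.str_indec} to $W$ itself, viewed as a $\wedge$-semilattice with least element $\emptyset$ (it is closed under $\wedge$ by Lemma~\ref{L.vee_wedge}). Suppose, toward a contradiction, that $W$ contained an infinite pairwise disjoint family of nonzero members, and choose from it a countable subfamily $\{d_i\mid i\in\omega\}$. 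By Corollary~\ref{C.disjoint}, for every $T\subseteq\omega$ the union $\bigvee_{i\in T} d_i$ lies in $W$; since the $d_i$ are nonempty and pairwise disjoint, distinct $T$ yield distinct unions. This produces continuum-many elements of $W$, contradicting $\card(W)<2^{\aleph_0}$. Hence $W$ satisfies the hypothesis of Lemma~\ref{L.str_indec}, settling the case $w=\emptyset$.

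For a general $w\in W$ the plan is to relativize. By Lemma~\ref{L.new_fr_old}(iv), with $u=w$ and $v=A$, the pair $(A-w,\,W_{w,A})$, where $W_{w,A}=\{x-w\mid x\in W,\ x\geq w\}$, is again a Pratt comonoid, and $x\mapsto x-w$ is an order- and $\wedge$-preserving bijection from $\{x\in W\mid x\geq w\}$ onto $W_{w,A}$ carrying $w$ to $\emptyset$. Under this bijection two elements $>w$ are disjoint relative to $w$ precisely when their images are disjoint in $W_{w,A}$, so strong indecomposability relative to $w$ in $W$ matches strong indecomposability in $W_{w,A}$, and similarly for the equivalence relation. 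Since $\card(W_{w,A})\leq\card(W)<2^{\aleph_0}$, the argument of the previous paragraph, now applied to the Pratt comonoid $(A-w,W_{w,A})$, shows that $W_{w,A}$ has no infinite pairwise disjoint family, so Lemma~\ref{L.str_indec} applies to it; transporting its conclusions back through the bijection gives the first assertion of the corollary for this $w$.

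For the dual assertions, the plan is to apply the non-dual results just obtained to the complement comonoid. By Lemma~\ref{L.new_fr_old}(i), $(A,W^\neg)$ is a Pratt comonoid with $\card(W^\neg)=\card(W)<2^{\aleph_0}$, and complementation $x\mapsto\neg x$ is an order-reversing bijection $W\to W^\neg$ taking $w$ to $\neg w$. By construction this identifies the dual $\wedge$-semilattice of $W$ with $W^\neg$ under its subset ordering, so an element is dually strongly indecomposable relative to $w$ in $W$ exactly when its complement is strongly indecomposable relative to $\neg w$ in $W^\neg$, while the elements $<w$ in $W$ correspond to the elements $>\neg w$ in $W^\neg$. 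Applying the already-proved first assertion to $(A,W^\neg)$ at the point $\neg w$ and reading the result back through complementation yields the second assertion.

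The only step with genuine content is the cardinality argument against an infinite pairwise disjoint family; everything else is transport of structure through Lemma~\ref{L.new_fr_old}(iv) and~(i). I expect the fussiest bookkeeping to be the order-reversing identifications of the dual case—keeping straight that ``majorizes'' in $W^\neg$ becomes ``is majorized by'' in $W$—but this is conceptually routine.
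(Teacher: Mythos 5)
Your proposal is correct and takes essentially the same route as the paper: the paper also deduces the corollary by observing that $\card(W)<2^{\aleph_0}$ together with Corollary~\ref{C.disjoint} excludes infinite pairwise disjoint families (so that Lemma~\ref{L.str_indec} applies to $W$), then relativizes to a general $w$ via Lemma~\ref{L.new_fr_old}(iv) and dualizes via Lemma~\ref{L.new_fr_old}(i). Your write-up merely makes explicit the transport-of-structure bookkeeping that the paper leaves as an observation preceding the statement.
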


Not only elements of $W,$ but also elements of $A$ have
obligatory relationships with equivalence classes of
strongly indecomposable elements:

\begin{lemma}\label{L.a,E}
Suppose $(A,W)$ is a Pratt comonoid such that $W$ has less
than continuum cardinality.
Let us say that an element $a\in A$ {\em dominates} an
equivalence class $E$ of strongly indecomposable elements
if every member of $W$ which contains the element $a$ majorizes some
member of $E.$

Then every $a\in A$ dominates one or more
equivalence classes of strongly indecomposable elements of $W.$

Moreover, if $(A,W)$ is $\!T_1\!$ and $a$ dominates an equivalence
class $E,$ then either $\bigwedge_{w\in E} w=\emptyset,$
or $\bigwedge_{w\in E} w=\{a\}.$
In the latter case, $E$ is the set of all strongly
indecomposable elements of $W$ containing $a,$
$E$ is the only equivalence class dominated by $a,$
and $a$ is the only element dominating $E.$

Hence if $(A,W)$ as above is $\!T_1\!$ and $A$ is infinite, there
exists at least one
equivalence class $E$ of strongly indecomposable elements
such that $\bigwedge_{w\in E} w=\emptyset.$
\end{lemma}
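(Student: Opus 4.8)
The plan is to prove the four assertions of the lemma in turn, relying throughout on Corollary~\ref{C.str_indec} (applied with $w=\emptyset$): since $W$ has less than continuum cardinality, there are only finitely many equivalence classes $E_0,\dots,E_{n-1}$ of strongly indecomposable elements, and every nonempty member of $W$ majorizes at least one of them.

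To show every $a\in A$ dominates some class, I would fix $a$ and, for each $w\in W$ with $a\in w$, let $D(w)\subseteq\{E_0,\dots,E_{n-1}\}$ be the set of those classes having a member majorized by $w$. Each such $w$ is nonempty, so $D(w)\neq\emptyset$. Because $W$ is closed under finite meets (Lemma~\ref{L.vee_wedge}) and $a\in w\wedge w'$ whenever $a\in w,w'$, the family $\{D(w)\mid a\in w\}$ is downward directed: any strongly indecomposable element below $w\wedge w'$ lies below both $w$ and $w'$, so $D(w\wedge w')\subseteq D(w)\cap D(w')$. These being nonempty subsets of the finite set $\{E_0,\dots,E_{n-1}\}$, a $\subseteq$-minimal value $D(w^\ast)$ is, by directedness, a least element, hence contained in every $D(w)$; any class belonging to it is majorized by every $w\ni a$, i.e.\ is dominated by $a$.

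For the second assertion, assume $(A,W)$ is $T_1$, that $a$ dominates $E$, and set $m=\bigwedge_{w\in E}w$. I would first show $m\subseteq\{a\}$: if some $b\in m$ had $b\neq a$, the $T_1$ condition gives $w_0\in W$ with $a\in w_0$ and $b\notin w_0$, and domination forces $w_0$ to majorize some $e\in E$, whence $b\in m\leq e\leq w_0$, a contradiction. Thus $m=\emptyset$ or $m=\{a\}$. In the latter case every member of $E$ contains $a$; for the identification of $E$ I would note that any strongly indecomposable $e'\in W$ with $a\in e'$ satisfies $a\in e'\wedge e$ for each $e\in E$, hence is equivalent to the members of $E$ and so lies in $E$, giving $E$ as exactly the set of strongly indecomposable elements containing $a$. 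That $E$ is the only class dominated by $a$ follows by fixing $e\in E$ (so $a\in e$) and applying domination of any further class $E'$ to $w=e$: this produces $e''\in E'$ with $e''\leq e$, and $e''\wedge e=e''\neq\emptyset$ forces $E'=E$. That $a$ is the only dominator of $E$ follows similarly: an $a'\neq a$ dominating $E$ would, via a $T_1$ set $w_0$ containing $a'$ but not $a$, majorize some $e\in E$, and then $a\in e\leq w_0$ would contradict $a\notin w_0$.

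Finally, assuming $(A,W)$ is $T_1$ with $A$ infinite, I would argue by contradiction: suppose $\bigwedge_{w\in E}w\neq\emptyset$ for every class $E$. By the first assertion each $a\in A$ dominates some class, and the second assertion then forces that class to have meet $\{a\}$; by the uniqueness just proved it is the unique class dominated by $a$, so $a\mapsto E_a$ is a well-defined injection of $A$ into the finite set $\{E_0,\dots,E_{n-1}\}$, contradicting the infinitude of $A$. I expect the directed-family argument of the first step---passing from the a priori $w$-dependent sets $D(w)$ to a single class dominated by all $w\ni a$---to require the most care; the remaining parts are fairly direct manipulations of the $T_1$ condition and the definition of domination.
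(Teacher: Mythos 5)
Your proposal is correct and follows essentially the same route as the paper: both rest on the finiteness of the set of equivalence classes (Corollary~\ref{C.str_indec}), closure of $W$ under finite meets, and direct manipulations of the $T_1$ condition for the remaining assertions. The only difference is cosmetic: for the first assertion the paper argues by contradiction (intersecting, for each class $E$ not dominated by $a$, a witness $w_E\ni a$ majorizing no member of $E$, and invoking Lemma~\ref{L.str_indec} on the resulting nonempty meet), whereas you reach the same conclusion positively via the stabilization of your downward-directed family $D(w)$ --- two phrasings of one and the same finite-intersection argument.
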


\begin{proof}
Suppose some $a\in A$ dominated no equivalence class $E$ of strongly
indecomposable elements.
Then for each such $E$ we could find
a $w_E\in W$ containing $a$ but majorizing no member of $E.$
The intersection of these finitely many elements would be a member
of $W$ containing $a,$ hence nonempty, but majorizing no
strongly indecomposable element, contradicting Lemma~\ref{L.str_indec}.
This gives our first conclusion.

If $(A,W)$ is $\!T_1\!$ and $a$ dominates $E,$
then for every $b\neq a$ we can find a member of $W$
containing $a$ but not $b,$ and this will majorize an element of $E;$
hence $\bigwedge_{w\in E} w$ can contain no $b\neq a,$
and so must be $\emptyset$ or $\{a\}.$
In the latter case, it is easy to verify
that the set of strongly indecomposable elements of $W$
containing $a$ must coincide with $E,$
giving the first two assertions under this hypothesis.
Moreover, for any $b\neq a,$ the $\!T_1\!$ property gives a member
of $W$ containing $b$ but not $a,$ giving the third assertion.

Since there are only finitely many $E,$ there are only finitely
many $a$ dominating equivalence classes $E$ with nonempty
intersection; so if $A$ is infinite, we can apply the
first conclusion of the lemma to any $a$ not of that sort,
and get the final conclusion.
\end{proof}

Further observations:
If $(A,W)$ is a Pratt comonoid with $W$ countably infinite,
and $E$ an equivalence class of strongly indecomposable
elements of $W$ having empty intersection, then we see that
we can construct an $\!\omega\!$-indexed descending chain
$x_0>x_1>\dots$ downward cofinal in $E,$
and so in particular, having empty intersection.
Given the countability of $W,$ Proposition~\ref{P.continuum}
implies that for any ascending chain $y_0<y_1<\dots$ in $W,$
there must be some $m$ and $n$ such that
\begin{equation}\begin{minipage}[c]{35pc}\label{d.geq}
$y_n\ >\ x_m\wedge\bigvee_{i\in\omega} y_i.$
\end{minipage}\end{equation}
But in fact, we had enough information to see that
without calling on Proposition~\ref{P.continuum}.
On the one hand,~\eqref{d.geq} holds for all $m$ and $n$
if $x_0\wedge\bigvee_{i\in\omega} y_i=\emptyset.$
On the other hand, if this intersection is nonempty, that means
some $y_n$ has nonempty intersection with $x_0.$
That intersection belongs to $E,$
hence will majorize some $x_m,$ and the relation $y_n>x_m$
then implies~\eqref{d.geq}.

Though such increasing and decreasing chains do not have the
properties that would allow us to call on
Proposition~\ref{P.continuum} and get a contradiction, they
do lead to the following result, which
contrasts with the behavior of the examples of
Theorems~\ref{T.2^*w} and~\ref{T.T1_nondiscrete}.

\begin{proposition}\label{P.inf_crswd}
If $(A,W)$ is a $\!T_1\!$ Pratt comonoid with $W$ countably infinite,
then there exist crosswords over $W$ with infinitely many
distinct rows and columns.
\end{proposition}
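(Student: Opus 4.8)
The plan is to exhibit an explicit crossword $C:A\times A\to 2$ all of whose rows and columns lie in $W$ and which has infinitely many distinct rows and columns. First I would dispose of two preliminaries. Since $(A,W)$ is $\!T_1\!$ and $W$ is infinite, $A$ must be infinite: were $A$ finite, Lemma~\ref{L.finite} would force $W=2^A$, hence $W$ finite. Consequently Lemma~\ref{L.a,E} applies (a countably infinite $W$ has less than continuum cardinality), and, as recorded in the discussion immediately preceding this proposition, it supplies a strictly descending chain $x_0>x_1>\cdots$ in $W$ with $\bigwedge_{n}x_n=\emptyset$. Applying the same reasoning to the dual comonoid $(A,W^{\neg})$ of Lemma~\ref{L.new_fr_old}(i) --- which is again $\!T_1\!$ with $W^{\neg}$ countably infinite on the same infinite base-set --- and then complementing, I obtain a strictly ascending chain $Z_0<Z_1<\cdots$ in $W$.

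With the two chains in hand the construction is short. For $a\in A$ let $\nu(a)$ be the largest $m$ with $a\in x_m$ (setting $\nu(a)=-1$ if $a\notin x_0$), and for $b\in A$ let $f(b)$ be the least $k$ with $b\in Z_k$ (setting $f(b)=\infty$ if there is no such $k$). The key point is that $\bigwedge_n x_n=\emptyset$ forces $\nu(a)$ to be a genuine integer for every $a$. I then define $C(a,b)=1$ precisely when $f(b)\leq\nu(a)$; this is equivalent to $a\in x_{f(b)}$ and also to $b\in Z_{\nu(a)}$ (with the conventions $x_\infty=\emptyset$, $Z_{-1}=\emptyset$). Reading rows and columns off the latter two descriptions, the $\!a\!$-th row of $C$ is $Z_{\nu(a)}\in W$ and the $\!b\!$-th column is $x_{f(b)}\in W$, so $C$ is a crossword over $W$.

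It then remains to count distinct rows and columns, and here the strictness of the chains is exactly what I would use. For each $k\in\omega$, strictness of $x_0>x_1>\cdots$ yields some $a\in x_k-x_{k+1}$, for which $\nu(a)=k$ and hence whose row is $Z_k$; as the $Z_k$ strictly increase, these rows are pairwise distinct. Symmetrically, strictness of $Z_0<Z_1<\cdots$ produces for each $k$ some $b$ with $f(b)=k$ and column $x_k$, and the $x_k$ are pairwise distinct. Thus $C$ has infinitely many distinct rows and columns, proving the proposition. (One could instead obtain the columns for free from the rows via Lemma~\ref{L.2^*a}, but the direct count is just as quick.)

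I expect the only genuinely nontrivial ingredient to be the strictly descending chain with empty intersection: this is precisely what the $\!T_1\!$ hypothesis buys, through the analysis of strongly indecomposable elements culminating in Lemma~\ref{L.a,E}, so that the main work is already done in the preceding lemmas. The ascending chain then comes free by duality, and the empty-intersection property is exactly the condition that keeps every row equal to some $Z_k$ rather than to the possibly out-of-$W$ union $\bigvee_k Z_k$. What remains after that is only the bookkeeping of the two coordinate functions $\nu$ and $f$, which interlock so that rows and columns are forced to be honest members of the two chains.
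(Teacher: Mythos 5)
Your proof is correct and is essentially the paper's own argument: both obtain the strictly descending chain with empty intersection from Lemma~\ref{L.a,E} via the discussion preceding the proposition, get the ascending chain by duality, and your crossword defined by $C(a,b)=1\iff f(b)\le\nu(a)$ is precisely the paper's $C=\bigvee_n x_n\times y_n$, with the same counting of rows and columns via strictness of the chains. The only difference is cosmetic: you make explicit (via Lemma~\ref{L.finite}) that $A$ must be infinite so that Lemma~\ref{L.a,E} applies, a point the paper leaves implicit.
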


\begin{proof}
As noted above, we can construct an infinite descending
chain $(x_m)$ with empty intersection, and by duality,
an infinite ascending chain $(y_n)$ with union $A.$
Without loss of generality, let us take these chains
to be strictly decreasing and strictly increasing respectively.
Then $C=\bigvee_n x_n\times y_n$ will be the desired crossword.
Indeed, given $a\in A,$ let $m$ be the greatest integer
such that $a\in x_m;$ then we see that the row of $C$ indexed
by $a$ will be given by $y_m\in W,$ and a dual statement applies
to columns; so the rows and columns of $C$ comprise precisely
$\{x_m\}\cup\{y_n\},$ an infinite subset of $W.$
\end{proof}

Here is another sort of chain that we can
construct in any infinite $\!T_1\!$ Pratt comonoid with $W$ countable,
or more generally, with $\card(W)$ both less than the
continuum and less than $\card(A).$
The relativized version of Lemma~\ref{L.a,E} shows that
for every $w\in W$ there are
only finitely many elements $a\notin w$ which
dominate equivalence classes relative to $w$ that have
intersection strictly larger than $w.$
Since $\card(W)<\card(A),$ there must be an $a$ which, relative to
every $w$ not containing it, dominates no such equivalence classes.
Now starting a recursion with $y_0=\emptyset,$ assume we have
elements $y_0<\dots<y_{i-1},$ each strongly indecomposable
relative to the one before, with $a\notin y_{i-1}.$
Then we can take an equivalence class $E_i$
of elements strongly indecomposable relative to $y_{i-1}$ which is
dominated by $a$ relative to $y_{i-1},$ and take a $y_i\in E_i$ which
does not contain~$a.$
We thus get an infinite chain with these properties;
but how this fact might be useful we again don't know.

If we assume the negation of the continuum hypothesis,
then the example of Theorem~\ref{T.T1_nondiscrete}
has $W$ of less than continuum cardinality, hence
Corollary~\ref{C.str_indec}, Lemma~\ref{L.a,E}, and
the observation of the last paragraph apply to that example.
Thus, we cannot expect those
results to yield contradictions without some
stronger assumption, such as that $W$ be countable.

A question we have not studied, which is also suggested
by Theorem~\ref{T.T1_nondiscrete}, is:

\begin{question}\label{Q.A<2^*2}
Assuming the negation of the continuum hypothesis, can
there exist a non-discrete $\!T_1\!$ Pratt comonoid
$(A,W)$ whose base-set $A$ has less than continuum cardinality?
\end{question}

\section{Generalizing the construction of \texorpdfstring{\S\ref{S.2^*w}}{section 5}}\label{S.*P_Pi}

Let us pick up a loose end.
In \S\ref{S.2^*w} we saw that the lattice of subsets of $A=2^\omega$
generated by $\{e_n\mid n\in\omega\}\cup\{\emptyset,A\}$
formed a Pratt comonoid.
We sketch here, as promised in the paragraph preceding
Corollary~\ref{C.isolated}, a generalization of that result.

First let us generalize Corollary~\ref{C.isolated}.
For brevity we will not, this time, include condition~(ii)
of Lemma~\ref{L.isolated} among the equivalent conditions
in the statement, though the equivalence of conditions~(i) and~(ii)
of that lemma will still be essential to the proof.

\begin{lemma}\label{L.prod_A_i}
Let $(A_i)_{i\in I}$ be any family of {\em finite} partially
ordered sets, such that each $A_i$ has a least element $0_i$
and a greatest element $1_i,$ and let $A=\prod_{i\in I} A_i,$
with $\preccurlyeq$ the componentwise partial ordering.
Let $S$ be the set of elements of $A$ which have $\!i\!$-th
coordinate $0_i$ for all but at most one $i,$
and $S'$ the set of elements having $\!i\!$-th
coordinate $1_i$ for all but at most one $i.$
Then the following conditions on an element $x\in U_\subseteq(A)$
are equivalent.

\textup{(i)} $x$ is an isolated point of $U_\subseteq(A)$
under the natural topology.

\textup{(ii)} $x$ is the inverse image under the
projection of $A$ onto a finite sub-product
$A_{i_0}\times\dots\times A_{i_{n-1}}$
\textup{(}where $i_0,\dots,i_{n-1}$ are distinct elements
of $I)$ of an up-set of that sub-product.

\textup{(iii)} $x$ lies in the lattice of up-sets of $A$ generated
by $\emptyset$ and the elements ${\uparrow}(s)$ for $s\in S.$

\textup{(iv)} $\neg x$ lies in the lattice of down-sets of $A$ generated
by $\emptyset$ and the elements ${\downarrow}(s)$ for $s\in S'.$
\end{lemma}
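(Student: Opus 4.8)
The plan is to route everything through the equivalence (i)$\iff$(ii) of Lemma~\ref{L.isolated} — namely that an up-set $x$ is isolated exactly when it is simultaneously a finite union of principal up-sets and a finite intersection of complements of principal down-sets. Using this as a bridge, I would establish (i)$\iff$(ii), then (ii)$\iff$(iii) by a direct lattice computation, and finally obtain (iv) by dualizing. Throughout I will exploit that each $A_i$ is \emph{finite}, which is what lets me convert "depends on finitely many coordinates'' into the two-sided description required by Lemma~\ref{L.isolated}.

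For (ii)$\implies$(i): suppose $x=\pi^{-1}(y)$ where $\pi$ is the projection onto a finite sub-product $B=A_{i_0}\times\dots\times A_{i_{n-1}}$ and $y$ is an up-set of $B$. Since $B$ is finite, $y$ is a finite join of principal up-sets of $B$, and $\neg y$ (a down-set of $B$) is a finite join of principal down-sets of $B$. Pulling these back along $\pi$ presents $x$ as a finite join of principal up-sets of $A$ (each generated by an element that is $0_i$ off the chosen coordinates) and $\neg x$ as a finite join of principal down-sets of $A$ (each $1_i$ off those coordinates). Thus $x$ satisfies the criterion of Lemma~\ref{L.isolated}, hence is isolated.

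The crux is (i)$\implies$(ii). By Lemma~\ref{L.isolated} I may write $x={\uparrow}(a_1)\vee\dots\vee{\uparrow}(a_m)$, and, discarding redundant generators, assume $\{a_1,\dots,a_m\}$ is an antichain. I claim each $a_k$ is \emph{finitely supported}, i.e.\ has $\!i\!$-th coordinate $0_i$ for all but finitely many $i$; granting this, $x$ depends only on the finitely many coordinates at which some $a_k$ is nonzero, so it is the inverse image of an up-set of the corresponding finite sub-product, giving~(ii). To prove the claim I argue exactly as in the converse half of Corollary~\ref{C.isolated}: if some $a_k$ had infinite support, let $a_k^{(F)}$ agree with $a_k$ on a finite coordinate set $F$ and be $0_i$ elsewhere, so that ${\uparrow}(a_k^{(F)})$ is a decreasing net converging to ${\uparrow}(a_k)$ in the induced topology (a pointwise check in $2^A$). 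Holding the other $a_l$ fixed and replacing ${\uparrow}(a_k)$ by ${\uparrow}(a_k^{(F)})$ in the join produces up-sets $x^{(F)}\to x$, each with $x^{(F)}\neq x$: indeed $a_k^{(F)}\in x^{(F)}$, while $a_k^{(F)}\notin x$, since $a_k^{(F)}<a_k$ rules out $a_k^{(F)}\geq a_k$ and the antichain property rules out $a_k^{(F)}\geq a_l$ for $l\neq k$. This exhibits $x$ as a limit of points distinct from itself, contradicting~(i).

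Finally, (ii)$\iff$(iii) is bookkeeping: each generator ${\uparrow}(s)$ with $s\in S$ depends on a single coordinate, hence factors through a finite sub-product, and the class of up-sets that so factor is closed under finite $\vee$ and $\wedge$, giving (iii)$\implies$(ii); conversely, pulling back a principal up-set of the finite sub-product and splitting it as the intersection of its single-coordinate factors ${\uparrow}(s)$ $(s\in S)$ expresses any such cylinder in the lattice generated by the ${\uparrow}(s)$, giving (ii)$\implies$(iii). Condition~(iv) then follows by applying the already-proved equivalence (i)$\iff$(iii) to $A$ under the opposite ordering: each $A_i$ is again finite with $0_i$ and $1_i$ interchanged, the set $S$ becomes $S'$, principal up-sets become principal down-sets, and — because complementation is a self-homeomorphism of $2^A$ carrying up-sets to down-sets — $x$ is isolated among up-sets iff $\neg x$ is isolated among down-sets. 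I expect the finite-support step of (i)$\implies$(ii) to be the only real obstacle; the remaining implications are routine manipulations with the finiteness of the $A_i$ and the distributivity of the lattice of up-sets.
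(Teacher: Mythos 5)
Your proof is correct and takes essentially the same route as the paper: both hinge on Lemma~\ref{L.isolated}, use the same limit argument (approximating a principal up-set with infinite support by finitely supported ones) for (i)$\implies$(ii), the same cylinder-set bookkeeping for (ii)$\iff$(iii), and obtain (iv) by order-duality. The only difference is organizational — you close the cycle via (ii)$\implies$(i) and a complementation homeomorphism for (iv), where the paper proves (iii)$\wedge$(iv)$\implies$(i) and invokes symmetry — but the mathematical content is identical.
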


\begin{proof}
We shall show (i)$\!\implies\!$(ii)$\!\iff\!$(iii), whence
by symmetry also (ii)$\!\iff\!$(iv),
and then that (iii)$\!\wedge\!$(iv)$\!\implies\!$(i).

The proof that (i)$\!\implies\!$(ii) follows the idea of
Corollary~\ref{C.isolated}~(i)$\wedge$(ii)$\implies$(iii).
Assuming~(i), we can, by Lemma~\ref{L.isolated}(ii),
write $x$ as a union ${\uparrow}(a_0)\vee\dots\vee{\uparrow}(a_{m-1})$
with each $a_j\in A,$ and we can assume without loss of
generality that none of the $a_j$ majorizes any of the others.
If any of the $a_j$ had infinitely many nonzero coordinates,
then we could write ${\uparrow}(a_j)$ as an intersection
of principal up-sets determined by elements ${\uparrow}(a)$ which
take finitely many nonzero coordinates from $a_j,$ and have
zero-elements in all other coordinates.
As in the proof of that corollary, this would make
$x$ a limit, in the natural topology on $U_\subseteq(A),$
of elements $\neq x,$ contradicting~(i).
Hence $x$ is a join of finitely many elements each
constraining only finitely many coordinates of elements of $A.$
If we write $\{i_0,\dots,i_{n-1}\}$ for the full set of
indices whose coordinates $x$ constrains, we see that $x$
is the inverse image of a subset of
$A_{i_0}\times\dots\times A_{i_{n-1}},$ which will
be an up-set in that product, proving~(ii).

To get (ii)$\!\implies\!$(iii), note that since
$A_{i_0}\times\dots\times A_{i_{n-1}}$ is a finite
partially ordered set, every up-set of that set is
a finite (possibly empty) union of principal up-sets.
Writing such a principal up-set
as ${\uparrow}(a_0,\dots,a_{n-1}),$ we see that it is the
intersection over $j=0,\dots,n-1$ of the principal up-sets determined
by the elements which have $a_j$ in the $i_j$ coordinate
and zeroes in all other coordinates.
Hence the inverse image of that up-set in $A$ is the intersection of
the principal up-sets of $A$ having the same descriptions, but
with ``other coordinates'' now ranging over $I$ rather
than just $\{i_0,\dots,i_{n-1}\}.$
This leads to the description of $x$ as in~(iii).
The reverse implication is clear.

By symmetry, we likewise get (ii)$\!\iff\!$(iv).

Finally, (iii)$\!\wedge\!$(iv) immediately gives
Lemma~\ref{L.isolated}(ii), and hence~(i).
\end{proof}

This leads to the following generalization of Theorem~\ref{T.2^*w}.

\begin{theorem}\label{T.prod_A_i}
For $A$ a partially ordered set
constructed as in Lemma~\ref{L.prod_A_i}, the set $W$ of all
up-sets of $A$ that satisfy the equivalent conditions
of that lemma is a Pratt comonoid structure on $A.$
\end{theorem}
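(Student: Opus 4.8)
The plan is to deduce the result directly from Corollary~\ref{C.fin_many}, exactly as Theorem~\ref{T.2^*w} was deduced from it. Two things must be checked: that $W$ is closed under pairwise unions and intersections and contains $\emptyset$ and $A$, and that no crossword $C\colon A\times A\to 2$ with all rows and columns in $W$ has infinitely many distinct rows.

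For the first point, I would read off everything from condition~(iii) of Lemma~\ref{L.prod_A_i}. By that condition $W$ is precisely the lattice of up-sets of $A$ generated by $\emptyset$ and the principal up-sets ${\uparrow}(s)$, $s\in S$; being a lattice, it is closed under pairwise meets and joins. It contains $\emptyset$ as a named generator, and it contains $A$, since $A={\uparrow}(0_A)$ where $0_A=(0_i)_{i\in I}$ has all coordinates equal to $0_i$ and hence lies in $S$.

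For the second, and main, point I would run the compactness argument of Theorem~\ref{T.2^*w} in the present generality. Give $A=\prod_{i\in I} A_i$ the product topology with each finite $A_i$ discrete; since each $A_i$ is then compact, $A$ is compact by Tychonoff's Theorem. Every $x\in W$, viewed as a map $A\to 2$, is continuous: by condition~(ii) of Lemma~\ref{L.prod_A_i}, $x$ is the inverse image, under the continuous projection of $A$ onto a finite subproduct $A_{i_0}\times\dots\times A_{i_{n-1}}$, of a subset of that subproduct, which is open-closed because the subproduct is finite and discrete. Consequently, for a crossword $C$ whose rows lie in $W$, the map sending each $a\in A$ to the column it indexes is continuous from $A$ into $2^A$, its $b$-th coordinate being the $b$-th row of $C$, an element of $W$ and hence continuous. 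Now if such a $C$ had infinitely many distinct columns, I would pick an infinite $B\subseteq A$ indexing distinct columns and, using compactness of $A$, a limit point $b$ of $B$. By continuity the $b$-th column would be a limit of the distinct columns indexed by $B$; but by condition~(i) the members of $W$ are isolated points of $U_\subseteq(A)$, so this limit column cannot belong to $W$, contradicting the assumption that all columns of $C$ lie in $W$. Hence $C$ has only finitely many distinct columns, and by Lemma~\ref{L.2^*a} only finitely many distinct rows; Corollary~\ref{C.fin_many} then applies and yields the theorem.

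The routine parts are the verifications of closure and of compactness; the one point demanding care is keeping the two topologies straight, as in \S\ref{S.2^*w}. Continuity of the column map is with respect to the product topology on the base-set $A$ and the product topology on $2^A$, whereas the isolation of elements of $W$ is with respect to the topology induced on $U_\subseteq(A)\subseteq 2^A$. The interplay of these two topologies — continuity feeding a limit into the space where $W$-elements are isolated — is exactly what forces the number of distinct columns to be finite, and is the crux of the argument.
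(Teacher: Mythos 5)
Your proposal is correct and follows essentially the same route as the paper's (sketched) proof: both adapt the compactness/isolation argument of Theorem~\ref{T.2^*w}, equipping $A=\prod_{i\in I}A_i$ with the product of discrete topologies and verifying the hypotheses of Corollary~\ref{C.fin_many} via continuity of members of $W,$ continuity of the column map, and the isolation of $W\!$-elements in $U_\subseteq(A)$ given by condition~(i) of Lemma~\ref{L.prod_A_i}. The only cosmetic difference is that you get continuity of each member of $W$ directly from condition~(ii) of that lemma (inverse image of a clopen subset of a finite discrete sub-product), whereas the paper deduces it from continuity of the generators ${\uparrow}(s)$ (composites of projections with characteristic functions of principal up-sets) together with closure under the lattice operations.
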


\begin{proof}[Sketch of proof]
Let us note how to adapt the proof of Theorem~\ref{T.2^*w}.
Condition~(iii) of Lemma~\ref{L.prod_A_i}, which
is analogous to the hypothesis of that theorem,
makes for the easiest translation of the proof.
We replace occurrences of ``$2$'' in that proof,
where they represent value-sets
of coordinates of elements of $A,$ by the appropriate
finite partially ordered sets $A_i,$ while
where $2$ occurs as the value-set of members
of $W\subseteq 2^A,$ it remains unchanged.

In Theorem~\ref{T.2^*w}, the generators $e_n: A\to 2$ of $W$ were
projections to the $\!n\!$-th coordinates.
The corresponding generators ${\uparrow}(s)$ $(s\in S)$ of
our present $W$ can be regarded as composite maps
$A\to A_i\to 2,$ where the first arrow is the projection onto
the $\!i\!$-th component, and the second is the characteristic
function of the principal up-set determined by some element of $A_i.$
Thus, as in the proof of Theorem~\ref{T.2^*w}, they are continuous maps.
Where $A$ was included in our list of lattice generators in
Theorem~\ref{T.2^*w}, it does not require separate mention here, since
it can be written ${\uparrow}(0),$ but $\emptyset$ is still needed,
and indeed appears in Lemma~\ref{L.prod_A_i}(iii).
\end{proof}

\section{Appendix: Background on the concept of Pratt comonoid}\label{S.chu}

In \cite{chu}, \cite{chu_online}, \cite{comonoid}
Vaughan Pratt studies, for $\Sigma$ a set, the category
$\mathbf{chu}_\Sigma,$ whose objects, {\em Chu spaces},
are pairs $(A,r,X),$
where $A$ and $X$ are sets, and $r:A\times X\to \Sigma$ a set map,
and where a morphism $(A,r,X)\to (B,s,Y)$ is given by
a pair of set-maps, $f:A\to B$ and $g:Y\to X$ such that
$s(f(a),y)=r(a,g(y))$ for $a\in A,$ $y\in Y.$
These spaces are used to model various programming concepts.
It is noted in \cite[\S1.6]{chu_online} that the definition is
based on ideas from the Master's thesis of Po Hsiang Chu.

If one restricts attention to objects $(A,r,X)$
such that distinct elements of $X$ induce distinct maps on $A,$
then one can regard $X$ as a set of maps $x: A\to\Sigma,$ and
drop the map $r$ from the description of these objects.
If one also takes $\Sigma=2=\{0,1\},$ then
$X,$ now a set of $\!\{0,1\}\!$-valued
functions on $A,$ can be regarded as a
set of distinguished subsets of $A,$
and morphisms $(A,X)\to (B,Y)$ correspond to set-maps $A\to B$
under which the inverse image of every distinguished
subset of $B$ is a distinguished subset of $A.$
Pratt notes that various sorts of mathematical structures
can be described as instances of Chu spaces;
for instance, the category of topological spaces can
be considered a subcategory of $\mathbf{chu}_2,$ determined by
the condition that the distinguished subsets
are closed under arbitrary unions and finite intersections.

Given objects $(A,X)$ and $(B,Y)$ of $\mathbf{chu}_\Sigma,$
Pratt defines $(A,X)\otimes(B,Y)$ to be the object whose
first component is the set $A\times B,$ and whose second component
is the set of those maps $A\times B\to\Sigma$
which form ``crosswords'' with rows from $Y$ and columns from $X.$
He then defines a {\em comonoid} in $\mathbf{chu}_\Sigma$ to
be an object $(A,X)$ given with a map
$(A,X)\to (A,X)\otimes (A,X)$ which makes certain diagrams
commute, dual to the diagrams of set-maps
that define the ordinary concept of monoid.

(The first author of this note, having worked with coalgebra objects
as representing objects for algebra-valued functors \cite{coalg},
prefers to use the unmodified term ``comonoid''
for an object given with an appropriate
sort of morphism into the {\em coproduct} of two copies of itself,
and would call an object of the sort Pratt considers
a ``$\!\otimes\!$-comonoid''.)

Pratt then shows that a morphism
$(A,X)\to (A,X)\otimes (A,X)$ can satisfy this
definition of comonoid if and only if it is
determined by the diagonal map $A\to A\times A;$
so such comonoids correspond to
objects $(A,X)$ with the property that
their diagonal maps {\em are} morphisms; in other
words, that every ``crossword'' over $X$
determines, via its diagonal, an element of $X.$

In \cite{comonoid} and \cite{puzzle}, Pratt focuses
on the case $\Sigma=2,$ and, in view of the fact
that these comonoids can be developed, for the
nonspecialist, in language that does not require familiarity
with the category $\mathbf{chu}_2,$
defines them roughly as we have done here,
emphasizing the ``crossword'' metaphor.
We have deviated from his notation only in
replacing $X$ with $W,$ as a mnemonic for ``words''.

\end{document}